\documentclass[a4paper,11pt]{article}
\usepackage[normalem]{ulem}
\usepackage{amsmath,amsthm,amssymb,enumerate}
\usepackage[T1]{fontenc}
\usepackage{gensymb}
\usepackage[authoryear,round]{natbib}
\usepackage[sc]{mathpazo}
\usepackage{multirow}
\usepackage{newtxtext,newtxmath}
\usepackage{subfig}
\usepackage{graphicx}
\usepackage{epstopdf}
\usepackage{cancel}
\usepackage[margin=3cm]{geometry}
\usepackage{tikz}
\usepackage{caption}
\usetikzlibrary{shapes,calc}
\usepackage{verbatim}
\usepackage{mathrsfs}
\usepackage{accents}
\usepackage[utf8]{inputenc}
\usepackage{booktabs}
\usepackage{xcolor}

\usetikzlibrary{decorations.pathmorphing,decorations.pathreplacing,
positioning,shapes,arrows,patterns,fadings,plotmarks,calc,intersections}
\tikzstyle{every picture}+=[font=\footnotesize]
\usepackage{paralist}
\usepackage{bbm}
\usepackage{latexsym}
\usepackage{enumerate}
\usepackage{enumitem}

\setlist{noitemsep, topsep=0.8ex, partopsep=0pt, leftmargin=3em}
\setlist[1]{labelindent=\parindent}
\newlist{axioms}{enumerate}{1}
\setlist[axioms]{font=\bfseries}
\newlist{alphenum}{enumerate}{1}
\setlist[alphenum]{label=\textbf{(\alph*)}, leftmargin=4em}
\newlist{alphienum}{enumerate}{1}
\setlist[alphienum]{label=\textit{(\alph*)}}
\newlist{romanenum}{enumerate}{1}
\setlist[romanenum]{label=\textit{(\roman*)}}
\newlist{romaninenum}{enumerate*}{1}
\setlist[romaninenum]{label=\textit{(\roman*)}}

\usepackage[vlined]{algorithm2e}
\SetKwIF{If}{ElseIf}{Else}{if}{}{else if}{else}{endif}
\SetKwFor{For}{for}{}{endfor}

\usepackage[noabbrev, capitalise]{cleveref}
\usepackage{tabu}
\crefname{equation}{\unskip}{\unskip}
\creflabelformat{equation}{#2(#1)#3}

\SetFuncSty{textsc}
\SetKwFunction{frun}{Run}\SetKwHangingKw{arun}{\frun}
\SetKwFunction{fset}{Set}\SetKwHangingKw{aset}{\fset}
\SetKwFunction{ffind}{Find}\SetKwHangingKw{afind}{\ffind}
\SetKwFunction{fselect}{Select}\SetKwHangingKw{aselect}{\fselect}
\SetKwFunction{fcompute}{Compute}\SetKwHangingKw{acompute}{\fcompute}
\SetKwFunction{fsolve}{Solve}\SetKwHangingKw{asolve}{\fsolve}
\SetKwFunction{fupdate}{Update}\SetKwHangingKw{aupdate}{\fupdate}
\SetKwFunction{festimate}{Estimate}\SetKwHangingKw{aestimate}{\festimate}
\SetKwFunction{fmark}{Mark}\SetKwHangingKw{amark}{\fmark}
\SetKwFunction{fbreak}{Break}\SetKwHangingKw{abreak}{\fbreak}
\SetKwFunction{frefine}{Refine}\SetKwHangingKw{arefine}{\frefine}
\SetKwFunction{compute}{Compute}
\SetKwFunction{set}{Set}

{\algorithm}%
{\endalgorithm}

\newtheorem{thm}{Theorem}[section]

\newtheorem{lem}[thm]{Lemma}

\theoremstyle{definition}

\theoremstyle{remark}
\newtheorem{rem}{Remark}[section]
\newtheorem{example}{\bf Example}[section]

\numberwithin{equation}{section}

\newcommand{\cA}{\mathcal A}

\newcommand{\cC}{\mathcal C}

\newcommand{\cV}{\mathcal V}

\newcommand{\cM}{\mathcal M}

\newcommand{\vket}{von K\'{a}rm\'{a}n equations }
\newcommand{\vk}{von K\'{a}rm\'{a}n}

\newcommand{\sit}{\sum_{T \in\mathcal{T}_h}\int_T}

\newcommand{\fl}{\;\text{ for all }}

\newcommand{\half}{\frac{1}{2}}
\newcommand{\trinl}{\ensuremath{|\!|\!|}}
\newcommand{\trinr}{\ensuremath{|\!|\!|}}

\newcommand{\dx}{{\rm\,dx}}

\newcommand{\ds}{{\rm\,ds}}

\DeclareMathOperator{\E}{\mathcal{E}}

\newcommand{\pw}{\text{pw}}

\newcommand{\T}{\mathcal{T}}

\newcommand{\bdalpha}{{\boldsymbol{\alpha}}}
\newcommand{\bdbeta}{{\boldsymbol{\beta}}}

\DeclareMathOperator*{\argmin}{arg\,min}

\def \R{{{\Bbb R}}}

\allowdisplaybreaks

\def\R{\mathbb{R}}

\def\cA{\mathcal{A}}

\def\p{\partial}
\def\O{\Omega}

\def\cV{\mathcal{V}}

\def\cM{\mathcal{M}}

\def\smean#1{\{\hskip -3pt\{#1\}\hskip -3pt\}}
\def\sjump#1{[\hskip -1.5pt[#1]\hskip -1.5pt]}

\title{A Quadratic $C^0$ Interior Penalty Method for the von K\'{a}rm\'{a}n Obstacle Problem}
\author{Sharat Gaddam\thanks{High Energy Materials Research Laboratory, DRDO,
Pune 411021, Maharashtra, India}\thanks{E-mail: gaddamsharat@gmail.com}}
\date{}

\begin{document}
\maketitle


\providecommand{\checkmark}{\ensuremath{\surd}}
\makeatletter
\@ifundefined{c@example}{\newtheorem{example}{Example}[section]}{}
\makeatother


\begin{abstract}
This article proposes and analyses a quadratic $C^0$ interior penalty method
for the displacement obstacle problem of the von K\'arm\'an plate. The discrete
space consists of Lagrange $P_2$ finite elements and the obstacle constraint is
imposed at the vertices. The trilinear form of the von K\'arm\'an bracket is
modified by terms on the edges so that it is bounded in the discrete energy
norm. The well-posedness of the discrete problem, namely the existence of a
discrete solution and its uniqueness under a smallness condition on the data,
is established. The Sobolev and Friedrichs constants of the discrete energy
norm are quantified with an explicit dependence on the mesh size. The main result
is an error estimate of order $\mathcal{O}(h^{\alpha})$ in the discrete energy
norm, where $1/2<\alpha\le1$ is the index of elliptic regularity of the
biharmonic operator on the polygonal domain. Numerical experiments on a square
and on an L-shaped domain confirm the predicted rates. The coincidence set has positive measure in one example and empty interior in another. The experiments also show how the penalty parameter affects the rates and identify a threshold in the size of the obstacle beyond which the iterative solver fails on fine meshes.
\end{abstract}

\medskip
\noindent\textbf{Keywords.} von K\'arm\'an equations; displacement obstacle problem; $C^0$ interior penalty method; fourth-order variational inequality; semilinear problem; {\em a priori} error estimate; primal--dual active set strategy;
coincidence set.

\medskip
\noindent\textbf{Mathematics Subject Classification.}
65N30, 65N12, 65N15, 74K20, 65K15, 49J40.

\section{Introduction}\label{sec:intro}

\subsection{Problem formulation}

The \vket~\citep{ciarlet1997mathematical} describe the bending of a thin elastic
plate by a system of two fourth-order semilinear equations. The unknowns are
the transverse displacement and the Airy stress function. Existence, regularity
and bifurcation for this system are studied in
\citep{berger1968karman,knightly1967existence,ciarlet1997mathematical}. This
article treats the case in which the plate lies above a given obstacle. The
displacement then solves a variational inequality and the contact set on which the
plate meets the obstacle is unknown.

Let $\O\subset\R^2$ be a bounded polygonal Lipschitz domain and let
$\chi\in H^2(\O)$ satisfy $\max_{x\in\partial\O}\chi(x)<0$. The set
\[
K:=\{\varphi\in H^2_0(\O)\,:\,\varphi\ge\chi\text{ a.e.\ in }\O\}
\]
is non-empty, closed and convex in $H^2_0(\O)$. Let $D^2\varphi$ denote the
Hessian and let
$[\varphi_1,\varphi_2]:=\varphi_{1xx}\varphi_{2yy}+
\varphi_{1yy}\varphi_{2xx}-2\varphi_{1xy}\varphi_{2xy}$
denote the von K\'arm\'an bracket. Define
\begin{align}\label{defnaandb}
a(\varphi_1,\varphi_2)&:=(D^2\varphi_1,D^2\varphi_2)_{L^2(\O)},
&
b(\varphi_1,\varphi_2,\varphi_3)&:=
-\tfrac12\bigl([\varphi_1,\varphi_2],\varphi_3\bigr)_{L^2(\O)}
\end{align}
for $\varphi_1,\varphi_2,\varphi_3\in H^2_0(\O)$. The form
$b(\bullet,\bullet,\bullet)$ is symmetric in all three arguments
\citep[Cor.~2.3]{brenner2017c}. Given $f\in L^2(\O)$, the von K\'arm\'an
obstacle problem seeks the displacement $u\in K$ and the Airy stress function
$v\in H^2_0(\O)$ with
\begin{subequations}\label{wform}
\begin{align}
a(u,u-\varphi_1)+2b(u,v,u-\varphi_1)
&\le(f,u-\varphi_1)_{L^2(\O)}
&&\fl \varphi_1\in K,\label{wforma}\\
a(v,\varphi_2)-b(u,u,\varphi_2)
&=0
&&\fl \varphi_2\in H^2_0(\O).\label{wformb}
\end{align}
\end{subequations}
The obstacle constrains the displacement only. The existence of a solution of
\eqref{wform}, its uniqueness under a smallness condition on the data and its
regularity on a polygonal domain are established in
\citep{carstensen2021morley}; \Cref{sec:wp} states these results.

\subsection{Literature}

Finite element methods for the von K\'arm\'an equations without a constraint
are analysed for conforming and mixed discretisations in
\citep{brezzi1978finite,miyoshi1976mixed,reinhart1982numerical,mallik2016conforming},
for hybrid discretisations in \citep{quarteroni1979hybrid}, for the Morley
element in \citep{mallik2016nonconforming,carstensen2017nonconforming}, and for
discontinuous Galerkin and $C^0$ interior penalty methods in
\citep{brenner2017c,carstensen2018priori}. Adaptive versions are studied in
\citep{carstensen2019adaptive}.

Fourth-order variational inequalities arise in contact problems for plates. The
theory of variational inequalities is presented in
\citep{glowinski2008lectures,kinderlehrer1980introduction}. The regularity of
the solution and of the free boundary is studied in
\citep{frehse1971differenzierbarkeitsproblem,frehse1973regularity,caffarelli1979obstacle}.
The displacement obstacle problem of the clamped Kirchhoff plate is analysed in
\citep{brenner2012finite,brenner2013morley,brenner2012quadratic} for $C^1$
conforming elements, for $C^0$ interior penalty methods and for classical
nonconforming elements. Numerical algorithms for such problems are developed in
\citep{hintermuller2002primaldual,hoppe1994adaptive,suttmeier2008numerical}.

Unilateral problems for related plate models are studied in
\citep{muradova2007unilateral,yau1992obstacle,miersemann1992stability}, and a
conforming penalty method in
\citep{ohtake1980analysisI,ohtake1980analysisII}. The article \citep{carstensen2021morley} establishes the
existence of a solution of \eqref{wform}, its uniqueness under an {\em a priori} and
an a posteriori smallness condition on the data, and the regularity of the
solution on a polygonal domain, and it analyses the Morley finite element
approximation of \eqref{wform}.

\subsection{Main results}

This article proposes the quadratic $C^0$ interior penalty method for
\eqref{wform} and proves its convergence. The method is defined on $V_h=\mathcal{P}_2(\T_h)\cap H^1_0(\O)$. Two
properties motivate this space. Its members are continuous, so the constraint
$\chi(p)\le\varphi_h(p)$ at the vertices $p$ defines a discrete admissible set
of continuous functions. The inclusion $V_h\subset H^1_0(\O)$ leaves only the
normal derivative discontinuous across an interior edge, so the consistency and
penalty terms of the method involve the single jump
$\sjump{\p\varphi_h/\p n}$. The $C^0$ interior penalty method is analysed for
the von K\'arm\'an equations in \citep{brenner2017c} and for the biharmonic
obstacle problem in \citep{brenner2012quadratic}, and the present article
combines the two analyses.

\Cref{sec:disc-setup} defines the bilinear form $a_{\rm IP}$ and the trilinear
form $b_{\rm IP}$. The form $b_{\rm IP}$ contains terms on the edges in
addition to the piecewise bracket. These terms make $b_{\rm IP}$ bounded in the
discrete energy norm and give the identities of \Cref{lem:trilinearprop}.
\Cref{thm:disc-sob} gives the Sobolev and Friedrichs constants of the discrete
energy norm with an explicit power of the mesh size. \Cref{thm:disc-existence} proves
the well-posedness of the discrete problem, namely the existence of a
discrete solution and its uniqueness under a smallness condition on the data. \Cref{thm:err} proves
\[
\|u-u_h\|_h+\|v-v_h\|_h\lesssim h^{\alpha}
\]
under the same condition. \Cref{sec:numerics} reports numerical experiments on
a square and on an L-shaped domain.

The results of \citep{carstensen2021morley} on the continuous problem
\eqref{wform} are used throughout this article and are stated without proof in
\Cref{sec:wp}. That article discretises \eqref{wform} with Morley elements.
The present discrete space is conforming in
$H^1(\O)$, and the fourth-order operator is stabilised by consistency and
penalty terms on the edges with a parameter $\sigma$. Both methods converge
with the order $\mathcal{O}(h^\alpha)$.

\subsection{Notation and outline}

Standard notation for Lebesgue and Sobolev spaces applies. For $s>0$ and
$1\le p\le\infty$, the seminorm and the norm on $H^s(\O)$ are $|\bullet|_s$ and
$\|\bullet\|_s$, those on $W^{s,p}(\O)$ are $|\bullet|_{s,p}$ and
$\|\bullet\|_{s,p}$, and $\|\bullet\|_{-s}$ is the norm on the dual space. The
$L^2(\O)$ scalar product and norm are $(\bullet,\bullet)_{L^2(\O)}$ and
$\|\bullet\|_{L^2(\O)}$, and $H^{-2}(\O)$ is the dual of $H^2_0(\O)$. The
energy norm is $\trinl\bullet\trinr:=\|D^2\bullet\|_{L^2(\O)}$ and its
piecewise version is
$\trinl\bullet\trinr_{\pw}:=\|D^2_{\pw}\bullet\|_{L^2(\O)}$, with the piecewise
bracket $[\bullet,\bullet]_{\pw}$. The index of elliptic regularity of the
biharmonic operator on $\O$ is $\alpha\in(1/2,1]$ and depends on the largest
interior angle of $\O$ \citep{blum1980boundary}. It equals $1$ for a convex
polygon and approximately $0.544$ for the L-shaped domain. The notation
$A\lesssim B$ abbreviates $A\le CB$ with a positive constant $C$ that depends
on $\trinl u\trinr$, $\trinl v\trinr$, $\|u\|_{2+\alpha}$,
$\|v\|_{2+\alpha}$ and $\|f\|_{L^2(\O)}$ and is independent of the mesh size,
and $A\approx B$ abbreviates $A\lesssim B\lesssim A$. The constants
$C_{\rm S}$ and $C_{\rm F}$ in the embeddings
$H^2_0(\O)\hookrightarrow C(\overline\O)$ and
$H^2_0(\O)\hookrightarrow L^2(\O)$ satisfy
\begin{align}\label{ctsembedding}
\|v\|_{L^\infty(\O)}\le C_{\rm S}\|v\|_{H^2_0(\O)},\quad
\|v\|_{L^2(\O)}\le C_{\rm F}\|v\|_{H^2_0(\O)}\fl v\in H^2_0(\O).
\end{align}

\Cref{sec:wp} states the properties of \eqref{wform} that the analysis uses.
\Cref{sec:disc-setup} defines the discrete forms and the discrete energy norm and
states the interpolation, enrichment and embedding results.
\Cref{sec:disc-prob} defines the discrete problem and proves
\Cref{thm:disc-existence}. \Cref{sec:err-est} proves \Cref{thm:err}.
\Cref{sec:numerics} presents the numerical experiments and
\Cref{sec:conclusion} concludes.

\section{The continuous problem}\label{sec:wp}

The existence of a solution of \eqref{wform}, its uniqueness under a smallness
condition on the data and its regularity are established in
\citep{carstensen2021morley}. This section states the results that the discrete
analysis uses and gives no proofs. \Cref{lem:Bound-b} is
\citep[Lem.~2.1]{brezzi1978finite}, \Cref{thm:contsdep} combines
\citep[Thm.~2.3]{carstensen2021morley} with
\citep[Rem.~2.1]{carstensen2021morley}, and \Cref{thm:reg} is
\citep[Thm.~3.5]{carstensen2021morley}.

\begin{lem}[Bound for $b(\bullet,\bullet,\bullet)$
\citep{brezzi1978finite}]\label{lem:Bound-b}
\noindent For any $\varphi_1,\varphi_2,\varphi_3\in H^2_0(\O)$, the trilinear
form $b(\bullet,\bullet,\bullet)$ introduced in \eqref{defnaandb} admits the
bound
$$b(\varphi_1,\varphi_2,\varphi_3)\le
\trinl\varphi_1\trinr\trinl\varphi_2\trinr\|\varphi_3\|_{L^\infty(\O)}
\le C_{\rm S}\trinl\varphi_1\trinr\trinl\varphi_2\trinr\trinl\varphi_3\trinr.$$
\end{lem}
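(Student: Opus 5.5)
The plan is to derive the first inequality from a pointwise Cauchy--Schwarz bound on the bracket and then Hölder's inequality. The second inequality is then the continuous embedding \eqref{ctsembedding}, with $\|\bullet\|_{H^2_0(\O)}$ read as the energy norm $\trinl\bullet\trinr$.

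\emph{Pointwise bound.} Fix $x\in\O$ and write $A:=D^2\varphi_1(x)$ and $B:=D^2\varphi_2(x)$; both are symmetric $2\times2$ matrices. The bracket is the bilinear form $[\varphi_1,\varphi_2]=A_{11}B_{22}+A_{22}B_{11}-2A_{12}B_{12}$. Set $\mathrm{cof}(B):=\begin{pmatrix}B_{22}&-B_{12}\\-B_{12}&B_{11}\end{pmatrix}$. Then the bracket equals the Frobenius product $A:\mathrm{cof}(B)$, and $|\mathrm{cof}(B)|=|B|$ in the Frobenius norm. Cauchy--Schwarz in $\R^{2\times2}$ gives
\[
|[\varphi_1,\varphi_2](x)|\le|D^2\varphi_1(x)|\,|D^2\varphi_2(x)|
\]
almost everywhere.

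\emph{Integration.} The product $|D^2\varphi_1|\,|D^2\varphi_2|$ lies in $L^1(\O)$ by Cauchy--Schwarz in $L^2(\O)$. Since $\varphi_3\in H^2_0(\O)\subset C(\overline\O)$, Hölder's inequality with exponents $(1,\infty)$ yields
\[
b(\varphi_1,\varphi_2,\varphi_3)\le\tfrac12\int_\O|D^2\varphi_1|\,|D^2\varphi_2|\,|\varphi_3|\dx\le\tfrac12\trinl\varphi_1\trinr\trinl\varphi_2\trinr\|\varphi_3\|_{L^\infty(\O)}.
\]
This is even sharper than stated by the factor $\tfrac12$, so the first inequality holds a fortiori. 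The second inequality follows directly from $\|\varphi_3\|_{L^\infty(\O)}\le C_{\rm S}\trinl\varphi_3\trinr$ in \eqref{ctsembedding}.

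The only point requiring care is the identification of the bracket with $A:\mathrm{cof}(B)$ and the norm identity $|\mathrm{cof}(B)|=|B|$, both checked by expanding. One also has to make sure the norm on $H^2_0(\O)$ in \eqref{ctsembedding} is the energy norm $\trinl\bullet\trinr$. On $H^2_0(\O)$ this norm is equivalent to the full $H^2$ norm, and it is this norm that defines $C_{\rm S}$. No symmetry of $b$ and no integration by parts is needed.
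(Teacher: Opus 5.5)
Your proof is correct. The paper gives no argument of its own; it simply cites Lemma~2.1 of Brezzi's 1978 article. Your proof is the standard one for that lemma: write $[\varphi_1,\varphi_2]=D^2\varphi_1:\mathrm{cof}(D^2\varphi_2)$, use $|\mathrm{cof}(B)|=|B|$ and the pointwise Cauchy--Schwarz inequality, apply H\"older's inequality with exponents $(1,\infty)$, and finish with \eqref{ctsembedding}. It even gives the sharper constant $\tfrac12$, which is attained pointwise when $D^2\varphi_1=D^2\varphi_2=I$. You are also right that the second inequality holds only if $\|\bullet\|_{H^2_0(\O)}$ in \eqref{ctsembedding} means the energy norm $\trinl\bullet\trinr$, which is how the paper uses $C_{\rm S}$.
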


\begin{thm}[Existence, {\em a priori} bound, {\em a priori} and
{\em a posteriori} criteria for uniqueness
\citep{carstensen2021morley}]\label{thm:contsdep}
\noindent Let $(f,\chi)\in L^2(\O)\times H^2(\O)$ satisfy
$\max\chi(\partial\O)<0$. Then the problem \eqref{wform} admits at least one
solution $(u,v)\in K\times H^2_0(\O)$. Moreover, with a positive constant
$C(\chi)$ determined by the obstacle alone, every solution $(u,v)$ of
\eqref{wform} obeys $(a)$-$(b)$.
\begin{itemize}
\item[(a)] $\dfrac{1}{2}\trinl u\trinr^2+\trinl v\trinr^2\le N^2(f,u):=
\trinl u\trinr^2+\trinl v\trinr^2-2(f,u)_{L^2(\O)}+2C^2_{\rm F}\|f\|^2\\
\le M^2(f,\chi):=C(\chi)+3C^2_{\rm F}\|f\|^2_{L^2(\O)},$
\item[(b)] If $\dfrac{C_{\rm S}^2}{4}\trinl u\trinr^2+
C_{\rm S}\trinl v\trinr<\half$, then $(u,v)$ is the only solution to
\eqref{wform}.
\end{itemize}
The {\em a priori} smallness assumption $C_{\rm S}M(f,\chi)<1/2$ on the data,
as well as the {\em a posteriori} criterion $C_{\rm S}N(f,\varphi)<1/2$ for a
single $\varphi\in K$, each yield $C_{\rm S}N(f,u)<1/2$ and therefore the
uniqueness of the solution to \eqref{wform}.
\end{thm}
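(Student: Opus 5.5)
The plan is to treat \eqref{wform} through the reduced problem for $u$ alone. First I define the solution operator $G:H^2_0(\O)\to H^2_0(\O)$ of \eqref{wformb}: for $\varphi\in H^2_0(\O)$, the element $G(\varphi)$ solves $a(G(\varphi),\psi)=b(\varphi,\varphi,\psi)$ for all $\psi\in H^2_0(\O)$. Lax--Milgram and \Cref{lem:Bound-b} make $G$ well defined and give $\trinl G(\varphi)\trinr\le C_{\rm S}\trinl\varphi\trinr^2$. The symmetry of $b$ then gives $b(\varphi,\varphi,G(\varphi))=\trinl G(\varphi)\trinr^2$. By Rellich compactness, $\varphi_n\rightharpoonup\varphi$ in $H^2_0$ implies strong convergence in $W^{1,p}$, so $G$ maps weakly convergent sequences to strongly convergent ones. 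Because of the symmetry of $b$, \eqref{wforma} is exactly the first-order variational inequality of the functional
\[
J(\varphi):=\tfrac12\trinl\varphi\trinr^2+\tfrac12\trinl G(\varphi)\trinr^2-(f,\varphi)_{L^2(\O)}
\]
over $K$. Indeed, the derivative of $\tfrac12\trinl G\trinr^2$ in direction $\psi$ is $a(G(\varphi),G'(\varphi)\psi)=2b(\varphi,\psi,G(\varphi))$.

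For existence I show that $J$ is coercive, since $J(\varphi)\ge\tfrac12\trinl\varphi\trinr^2-C_{\rm F}\|f\|\trinl\varphi\trinr$, and weakly lower semicontinuous on the weakly closed convex set $K$. A minimiser $u$ exists, and I set $v=G(u)$. A minimiser is not automatically a solution because $J$ is nonconvex, but the one-sided derivative at $u$ along $\varphi_1-u\in K-u$ is nonnegative, and this is \eqref{wforma}.

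For (a) I test \eqref{wforma} with a fixed $\varphi_1=\varphi_\chi\in K$ that depends only on $\chi$. Such a function exists, e.g.\ a cutoff of $\chi^+$ or a bump dominating $\chi$, using $\max\chi(\partial\O)<0$. I also use \eqref{wformb} with $\varphi_2=v$, which gives $b(u,u,v)=\trinl v\trinr^2$. This yields
\[
\trinl u\trinr^2+2\trinl v\trinr^2\le (f,u-\varphi_\chi)+a(u,\varphi_\chi)+2b(u,v,\varphi_\chi).
\]
The bound $N^2\ge\tfrac12\trinl u\trinr^2+\trinl v\trinr^2$ follows directly from $2(f,u)\le\tfrac12\trinl u\trinr^2+2C_{\rm F}^2\|f\|^2$. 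To bound $N^2$ by $M^2$ I absorb the right-hand side by Young's inequality, using \Cref{lem:Bound-b}, $\|\varphi_\chi\|_{L^\infty}$ and $\trinl\varphi_\chi\trinr$, into $C(\chi)+3C_{\rm F}^2\|f\|^2$. The cross term $2b(u,v,\varphi_\chi)\le2\trinl u\trinr\trinl v\trinr\|\varphi_\chi\|_\infty$ is delicate, because it is cubic-like in scale. I would handle it by choosing $\varphi_\chi$ cleverly, or else use $J(u)\le J(\varphi_\chi)$ for a minimiser, which gives $N^2\le 2J(\varphi_\chi)+2C_F^2\|f\|^2$ directly. For arbitrary solutions, however, the test-function route is required. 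This is the step I expect to be the main obstacle.

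For (b) I take two solutions $(u,v)$ and $(\tilde u,\tilde v)$ and test each inequality with the other displacement. Adding the two inequalities and setting $e=u-\tilde u$ gives
\[
\trinl e\trinr^2\le 2b(\tilde u,\tilde v,e)-2b(u,v,e).
\]
Next I split $b(u,v,e)-b(\tilde u,\tilde v,e)=b(e,v,e)+b(\tilde u,v-\tilde v,e)$ and express $v-\tilde v$ via $G$, which gives $\trinl v-\tilde v\trinr\le\tfrac{C_{\rm S}}2(\trinl u\trinr+\trinl\tilde u\trinr)\trinl e\trinr$ in a symmetrised form. Combining this with \Cref{lem:Bound-b} leads to $\trinl e\trinr^2\le 2(\tfrac{C_{\rm S}^2}{4}\trinl u\trinr^2+C_{\rm S}\trinl v\trinr)\trinl e\trinr^2$-type bounds, after choosing which solution carries the smallness condition carefully. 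Hence $e=0$. Finally, $C_{\rm S}N<1/2$ implies the hypothesis of (b) through (a). Since $N(f,u)\le N(f,\varphi)$ for the minimiser and $N\le M$, both criteria follow.
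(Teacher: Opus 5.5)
Your existence argument (direct method for $J$, weak-to-strong continuity of $G$ by compactness, one-sided derivative along $\varphi_1-u$), the lower bound in (a), and the closing deductions are sound. Those deductions are that $C_{\rm S}N(f,u)<\tfrac12$ implies the hypothesis of (b), and that $N(f,u)\le N(f,\varphi)$ and $N(f,u)\le M(f,\chi)$ hold for the minimiser. Two steps do not close.

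\emph{Upper bound in (a).} The obstacle you flag is genuine, and no choice of $\varphi_\chi$ removes it. By \Cref{lem:Bound-b} the cross term is bounded by $2\|\varphi_\chi\|_{L^\infty(\O)}\trinl u\trinr\,\trinl v\trinr$, with $\|\varphi_\chi\|_{L^\infty(\O)}\ge\max\chi$. The left-hand side only controls $\trinl u\trinr^2+2\trinl v\trinr^2\ge2\sqrt2\,\trinl u\trinr\,\trinl v\trinr$, so absorption needs $\|\varphi_\chi\|_{L^\infty(\O)}<\sqrt2$. The admissible set for the obstacle $\lambda\chi$ is $\lambda K$, so any such coefficient (homogeneous of degree one in $\varphi_\chi$) grows with $\lambda$ for the scaled obstacles of \Cref{ssec:smallness}. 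The difficulty is the size of this coefficient, not the degree of the term. The stated $M^2(f,\chi)=C(\chi)+3C_{\rm F}^2\|f\|^2$ is exactly what the energy comparison yields:
\[
N^2(f,u)=2J(u)+2C_{\rm F}^2\|f\|^2\le2J(\varphi_\chi)+2C_{\rm F}^2\|f\|^2\le C(\chi)+3C_{\rm F}^2\|f\|^2,
\]
with $C(\chi):=2\trinl\varphi_\chi\trinr^2+\trinl G(\varphi_\chi)\trinr^2$ (compare $N_{\rm d}^2:=2j_h(u_h)+\dots$ in \Cref{thm:disc-existence}). It is therefore a bound for the energy minimiser. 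A minimiser is always a solution, but because $J$ is nonconvex a solution need not be a minimiser, so the bound does not transfer. State the upper bound for the minimiser rather than leaving arbitrary solutions as an open step. That is all the uniqueness conclusions use, because (b) asks the smallness condition of a single solution only.

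\emph{Part (b).} As sketched, your argument cannot produce a criterion on $(u,v)$ alone. With the split $b(e,v,e)+b(\tilde u,v-\tilde v,e)$ and $\trinl v-\tilde v\trinr\le\frac{C_{\rm S}}{2}(\trinl u\trinr+\trinl\tilde u\trinr)\trinl e\trinr$, you end with a factor $C_{\rm S}^2\trinl\tilde u\trinr(\trinl u\trinr+\trinl\tilde u\trinr)$. This involves the second solution, about which nothing is known, and swapping roles only moves the problem to $\tilde v$. The missing idea is to expand around the solution that satisfies the smallness condition and to test the equation for $\delta:=v-\tilde v$ with $\delta$ itself. With $\tilde u=u-e$ and $\tilde v=v-\delta$ one has $a(\delta,\bullet)=2b(u,e,\bullet)-b(e,e,\bullet)$, hence $b(e,e,\delta)=2b(u,e,\delta)-\trinl\delta\trinr^2$. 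This eliminates the cubic term, and the sum of the two variational inequalities becomes
\[
\trinl e\trinr^2+2\trinl\delta\trinr^2\le2b(u,e,\delta)-2b(e,e,v)\le2C_{\rm S}\trinl u\trinr\,\trinl e\trinr\,\trinl\delta\trinr+2C_{\rm S}\trinl v\trinr\,\trinl e\trinr^2 .
\]
The Young inequality $2C_{\rm S}\trinl u\trinr\,\trinl e\trinr\,\trinl\delta\trinr\le2\trinl\delta\trinr^2+\frac{C_{\rm S}^2}{2}\trinl u\trinr^2\trinl e\trinr^2$ then gives $\trinl e\trinr^2\le2\bigl(\frac{C_{\rm S}^2}{4}\trinl u\trinr^2+C_{\rm S}\trinl v\trinr\bigr)\trinl e\trinr^2$. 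This is exactly the stated criterion, and it uses only $(u,v)$.
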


\begin{thm}[\textbf{Regularity for the \vk\, obstacle problem}
\citep{carstensen2021morley}]\label{thm:reg}
\noindent Let $\O$ be a bounded polygonal domain in $\R^2$ and let the data
satisfy $f\in L^2(\O)$ and
$\chi\in H^2(\O)\cap H^3_{\rm loc}(\O)\cap C^2(\O)$ with
$\max\chi(\partial\O)<0$. Then every solution $(u,v)\in K\times H^2_0(\O)$ of
\eqref{wform} enjoys the regularity
$u,v\in H^{2+\alpha}(\O)\cap H^3_{\rm loc}(\O)\cap C^2(\O)$ for the index
$\alpha\in(1/2,1]$ of elliptic regularity. If, in addition, the bounded
Lipschitz domain $\O$ possesses a $C^{2+\gamma}$ boundary for some
$0<\gamma<1$, then $u,v\in C^2(\overline{\O})$.
\end{thm}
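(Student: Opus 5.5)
The plan is a bootstrap argument that treats \eqref{wformb} as a linear biharmonic problem for $v$ and \eqref{wforma} as a linear biharmonic obstacle problem for $u$ with a modified load. The regularity gained for one unknown is then fed into the other. Throughout I fix a solution $(u,v)\in K\times H^2_0(\O)$ from \Cref{thm:contsdep}.

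\textbf{Step 1 (the Airy function).} By \eqref{defnaandb}, \eqref{wformb} states that $v\in H^2_0(\O)$ is the weak solution of $\Delta^2 v=-\tfrac12[u,u]$. For $u\in H^2_0(\O)$ the bracket has the divergence structure
\[
[u,u]=\partial_x\bigl(u_x u_{yy}-u_y u_{xy}\bigr)+\partial_y\bigl(u_y u_{xx}-u_x u_{xy}\bigr).
\]
In two dimensions $\nabla u\in H^1(\O)\subset L^p(\O)$ for every $p<\infty$, so the fields inside the divergence lie in $L^q(\O)$ for every $q<2$. With the standard refinement for brackets, this gives $[u,u]\in H^{-1}(\O)$. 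Since $\alpha\le1$, $H^{-1}(\O)\hookrightarrow H^{\alpha-2}(\O)$. The elliptic regularity of the biharmonic operator on the polygon \citep{blum1980boundary} then yields $v\in H^{2+\alpha}(\O)$.

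\textbf{Step 2 (the displacement near $\partial\O$).} Since $u\in H^2_0(\O)\subset C(\overline\O)$ and $\max\chi(\partial\O)<0$, there is a neighbourhood $U$ of $\partial\O$ in which $u>\chi$. Test functions $\varphi_1=u\pm\varepsilon\psi$ with $\psi\in C_c^\infty(U)$ lie in $K$ for small $\varepsilon$. Hence in $U\cap\O$ the inequality \eqref{wforma} becomes the equation $\Delta^2u=f+[u,v]$, obtained from $2b(u,v,\psi)=-([u,v],\psi)$. The same divergence argument as in Step 1 puts $[u,v]$ in $H^{-1}$. A cut-off function equal to one near $\partial\O$ and supported in $U$ localises the problem, and biharmonic regularity on the polygon gives $u\in H^{2+\alpha}$ near $\partial\O$.

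\textbf{Step 3 (interior regularity).} On compact subsets of $\O$ I view $u$ as the solution of a fourth-order obstacle problem with load $g:=f+[u,v]$. I first bootstrap $v$ in the interior: since $\Delta^2v=-\tfrac12[u,u]$, interior elliptic regularity improves $v$ as soon as $u$ improves. The key input is the Frehse regularity theory \citep{frehse1971differenzierbarkeitsproblem,frehse1973regularity} for the biharmonic obstacle problem with $\chi\in H^3_{\rm loc}\cap C^2$. I would first use the $W^{-1,q}$ control of $g$ to get $u\in H^3_{\rm loc}$. Then $D^2u, D^2v\in H^1_{\rm loc}\subset L^p_{\rm loc}$, so $g\in L^p_{\rm loc}$, and Frehse's $C^2$ result (cf.\ \citep{caffarelli1979obstacle}) gives $u\in C^2(\O)$. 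Returning to $v$, $\Delta^2v\in L^p_{\rm loc}$ gives $v\in W^{4,p}_{\rm loc}\subset C^2(\O)$.

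Combining Steps 1 to 3 gives $u,v\in H^{2+\alpha}(\O)\cap H^3_{\rm loc}(\O)\cap C^2(\O)$. If $\partial\O\in C^{2+\gamma}$, then near the boundary $u$ and $v$ solve the equations of Step 2 with H\"older-improvable right-hand sides. Schauder estimates up to the boundary for the clamped biharmonic problem, after one more bootstrap, then give $C^2(\overline\O)$.

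\textbf{Main obstacle.} The hardest step is Step 3. Frehse's theory is stated for a load in $L^2$ or better, whereas here the load $[u,v]$ is initially only in $H^{-1}$. The first gain, $u\in H^3_{\rm loc}$, must therefore come from a version of the obstacle regularity that tolerates $H^{-1}$ loads. I would try the difference-quotient technique of Frehse, moving the $[u,v]$ term to the side of the inequality through its divergence form.
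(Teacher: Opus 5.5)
The paper gives no proof of this theorem; it quotes \citep[Thm.~3.5]{carstensen2021morley}, so your proposal has to stand on its own. Steps~1 and~2 do, with one caveat. The divergence form only places $[u,u]$ in $W^{-1,q}(\Omega)$ for $q<2$. The upgrade to $H^{-1}(\Omega)$ is Wente's inequality: write $[u,v]=J(u_x,v_y)+J(v_x,u_y)$, where $J$ is the Jacobian of a pair of $H^1_0(\Omega)$ functions. Cite it by name rather than as a ``standard refinement''. For $\alpha<1$ you can avoid it altogether, since $[u,u]\in L^1(\Omega)\subset H^{-1-\varepsilon}(\Omega)$ with $\varepsilon<1-\alpha$ already gives $v\in H^{2+\alpha}(\Omega)$.

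The genuine gap is the one you flag yourself: the first interior gain $u\in H^3_{\rm loc}(\Omega)$ in Step~3. As you note, Frehse's theory is formulated for the pure biharmonic obstacle problem with an $L^2$ load. With only $u\in H^2_0(\Omega)$ at hand, the load $f+[u,v]$ is not known to lie in $L^2_{\rm loc}(\Omega)$. You announce an $H^{-1}$-load version of the difference-quotient argument but do not supply it, so the interior $H^3_{\rm loc}\cap C^2$ regularity, and the whole bootstrap built on it, is unproved.

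No such extension is needed: shift the nonlinear load into the obstacle. After Step~1, $D^2v\in H^\alpha(\Omega)\subset L^p(\Omega)$ with $p>4$, so $[u,v]\in L^q(\Omega)\subset H^{-1}(\Omega)$ with $q=2p/(p+2)>4/3$. Let $w\in H^2_0(\Omega)$ solve $a(w,\bullet)=([u,v],\bullet)_{L^2(\Omega)}$. Then $w\in H^{2+\alpha}(\Omega)$. Interior $L^q$-regularity and the local two-dimensional embeddings $W^{4,q}\subset W^{3,2q/(2-q)}\subset W^{3,4}$ and $W^{4,q}\subset C^{2,2-2/q}$ give $w\in H^3_{\rm loc}(\Omega)\cap C^2(\Omega)$. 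Because $2b(u,v,u-\varphi)=-a(w,u-\varphi)$, the inequality \eqref{wforma} reads
\[
a\bigl(u-w,(u-w)-(\varphi-w)\bigr)\le\bigl(f,(u-w)-(\varphi-w)\bigr)_{L^2(\Omega)}\quad\text{for all }\varphi\in K.
\]
Hence $\xi:=u-w$ solves the biharmonic obstacle problem with load $f\in L^2(\Omega)$ and obstacle $\chi-w\in H^2(\Omega)\cap H^3_{\rm loc}(\Omega)\cap C^2(\Omega)$, which is still negative on $\partial\Omega$. This is exactly the class covered by the hypotheses on $(f,\chi)$, so Frehse's result applies verbatim. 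It gives $u=\xi+w\in H^3_{\rm loc}(\Omega)\cap C^2(\Omega)$, and your bootstrap for $v$ and the assembly with Step~2 then go through.

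A smaller point concerns the $C^{2+\gamma}$ add-on. The load $f$ stays in $L^2$, so it is not ``H\"older-improvable''. Classical Schauder estimates for the clamped plate also require a $C^{4,\gamma}$ boundary. You need a $C^{2,\gamma}$-type estimate on $C^{2,\gamma}$ domains, for instance in divergence form with $f$ written as a second derivative of a H\"older field, and that step deserves a precise reference.
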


\section{The discrete framework}\label{sec:disc-setup}

\subsection{Triangulations and broken spaces}

Let $\T_h$ be a shape-regular admissible triangulation of $\O$ into closed
triangles. Let $h_T:=\mathrm{diam}(T)$ for $T\in\T_h$ and
$h_{\max}:=\max_{T\in\T_h}h_T$. The sets of edges, vertices and edge midpoints
are $\E_h$, $\cV_h$ and $\cM_h$, and $\E_h$ is the union of the interior edges
$\E_h^i$ and the boundary edges $\E_h^b$. For $r\in\mathbb{N}_0$ let
$\mathcal{P}_r(T)$ be the space of polynomials of total degree at most $r$ on
$T$ and set
\[
\mathcal{P}_r(\T_h):=\{\phi\in L^2(\O)\,:\,\phi|_T\in\mathcal{P}_r(T)
\ \forall T\in\T_h\},\quad
H^s(\O,\T_h):=\{\phi\in L^2(\O)\,:\,\phi|_T\in H^s(T)\ \forall T\in\T_h\}.
\]
Let $T_-$ and $T_+$ be the two triangles that share an interior edge
$e\in\E_h^i$ and let the unit normal $n_e$ point out of $T_+$. For
$\phi\in H^1(\O,\T_h)$ set $\phi_\pm:=\phi|_{T_\pm}$ and
\[
\smean{\phi}_e:=\tfrac12(\phi_-+\phi_+),\qquad
\sjump{\phi}_e:=\phi_+-\phi_- .
\]
For $e\in\E_h^b$ set $\smean{\phi}_e:=\phi|_e$ and $\sjump{\phi}_e:=\phi|_e$.
These definitions apply componentwise to vector-valued and matrix-valued
functions.

\subsection{The discrete forms}

The discrete space is
\[
V_h:=\mathcal{P}_2(\T_h)\cap H^1_0(\O).
\]
Its members vanish on $\partial\O$, and the condition $\p u/\p n=0$ is imposed
weakly. Let $\sigma>0$ be the penalty parameter. For
$\psi_h,\varphi_h,w_h\in V_h$ define
\begin{align*}
a_{\rm pw}(\psi_h,\varphi_h)&:=\sit D^2\psi_h:D^2\varphi_h\,\dx,\\
a_{\rm aj}(\psi_h,\varphi_h)&:=
\sum_{e\in\E_h}\!\int_e\!\smean{D^2\psi_h\,n_e}\!\cdot\!
\sjump{\nabla\varphi_h}\,\ds
+\sum_{e\in\E_h}\!\int_e\!\smean{D^2\varphi_h\,n_e}\!\cdot\!
\sjump{\nabla\psi_h}\,\ds,\\
a_{\rm jj}(\psi_h,\varphi_h)&:=
\sum_{e\in\E_h}\frac{\sigma}{|e|}\int_e
\sjump{\nabla\psi_h\cdot n_e}\,
\sjump{\nabla\varphi_h\cdot n_e}\,\ds,\\
b_{\rm pw}(\psi_h,\varphi_h,w_h)&:=
-\tfrac12\sit[\psi_h,\varphi_h]w_h\,\dx,\\
b_{\rm j}(\psi_h,\varphi_h,w_h)&:=
-\tfrac12\sum_{e\in\E_h}\int_e\smean{cof(D^2 \psi_h)}\,
\sjump{\nabla \varphi_h}\cdot n_e\,w_h\,\ds
\end{align*}
and set
\begin{align}\label{defn-ah}
a_{\rm IP}(\psi_h,\varphi_h)
&:=a_{\rm pw}(\psi_h,\varphi_h)
-a_{\rm aj}(\psi_h,\varphi_h)
+a_{\rm jj}(\psi_h,\varphi_h),\\[2pt]
\label{defn-bh}
b_{\rm IP}(\psi_h,\varphi_h,w_h)
&:=b_{\rm pw}(\psi_h,\varphi_h,w_h)
-b_{\rm j}(\psi_h,\varphi_h,w_h)
-b_{\rm j}(\varphi_h,\psi_h,w_h).
\end{align}
The sums in $a_{\rm aj}$ and $a_{\rm jj}$ include the boundary edges. The form
$b_{\rm IP}(\bullet,\bullet,\bullet)$ is symmetric in its first two arguments,
and \Cref{lem:bIPbound} bounds it in the discrete energy norm.

Every $\varphi_h\in V_h$ has a continuous tangential derivative across each
edge, so $\sjump{\nabla\varphi_h}=\sjump{\p\varphi_h/\p n}\,n_e$. With the
identity $n_e^{\!\top} cof(H)\,n_e=t_e^{\!\top}H\,t_e$ for a symmetric matrix
$H$, the forms $a_{\rm aj}$ and $b_{\rm j}$ reduce to scalar integrals over the
edges.

The discrete energy norm is
\begin{equation}\label{discretenorm}
\|\varphi_h\|_h^2
:=\sum_{T\in\T_h}|\varphi_h|_{H^2(T)}^2
+\sum_{e\in\E_h}\sum_{j,k=1}^2|e|\,
\bigl\|\smean{\p^2\varphi_h/\p x_j\p x_k}\bigr\|_{L^2(e)}^2
+\sum_{e\in\E_h}\frac{\sigma}{|e|}\,
\bigl\|\sjump{\nabla\varphi_h\cdot n_e}\bigr\|_{L^2(e)}^2 .
\end{equation}
{\bf Note:} The seminorm $\|\bullet\|_h$ is well defined on
$\mathcal{P}_2(\T_h)+H^{2+\alpha}_0(\O)$, and it is a norm on $V_h$ for every
$\sigma>0$.

\subsection{Interpolation, enrichment and embeddings}

\begin{lem}[\rm Interpolation \citep{brenner2012quadratic}]
\label{lem:P2interp}
\noindent Let $I_h:H^2_0(\O)\to V_h$ denote the nodal interpolation operator
associated with the $P_2$ Lagrange finite element space, prescribed for
$\varphi\in H^2_0(\O)$ by
\begin{align*}
(I_h\varphi)(z)=\varphi(z)\quad\text{ for any }z\in\cV_h\cup\cM_h .
\end{align*}
Then every $\varphi\in H^{2+\alpha}(\O)$ obeys the estimates $(i)$-$(iii)$.
\begin{itemize}
\item[(i)] $\|(1-I_h)\varphi\|_h\lesssim h^\alpha
|\varphi|_{H^{2+\alpha}(\O)},$
\item[(ii)] $\sum_{e\in\E_h^i}|e|^{-1}\|\sjump{\nabla I_h\varphi\cdot n_e}
\|^2_{L^2(e)}\lesssim h^{2\alpha}|\varphi|^2_{H^{2+\alpha}(\O)},$ and
\item[(iii)] $\sum_{e\in\E_h^i}|e|\|\sjump{D^2 I_h\varphi\,n_e}
\|^2_{L^2(e)}\lesssim h^{2\alpha}|\varphi|^2_{H^{2+\alpha}(\O)}$.
\end{itemize}
\end{lem}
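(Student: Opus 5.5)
The three estimates will be proved elementwise and then summed. The tools are the Bramble--Hilbert lemma on a reference triangle for the fractional space $H^{2+\alpha}$, scaling, and trace inequalities.

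Since $\alpha>1/2$, the embedding $H^{2+\alpha}(T)\hookrightarrow C^1(T)$ holds, so point values and the $P_2$ nodal interpolant are well defined elementwise. On the reference triangle $\hat T$, the map $\hat\varphi\mapsto(1-\hat I)\hat\varphi$ is bounded from $H^{2+\alpha}(\hat T)$ into $H^2(\hat T)$ and vanishes on $\mathcal P_2(\hat T)$. The Bramble--Hilbert lemma, in its version for fractional Sobolev spaces (Dupont--Scott), then gives $|(1-\hat I)\hat\varphi|_{H^2(\hat T)}\lesssim|\hat\varphi|_{H^{2+\alpha}(\hat T)}$. The Slobodeckij seminorm scales like $h_T^{2+\alpha-1}$, so after transforming back we obtain
\[
|(1-I_h)\varphi|_{H^2(T)}\lesssim h_T^{\alpha}|\varphi|_{H^{2+\alpha}(T)} .
\]
The same argument bounds $|(1-I_h)\varphi|_{H^{2+\beta}(T)}$ for a fractional $\beta$, and this is what the trace terms need.

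For the edge contributions to $\|\cdot\|_h$ we use the trace inequality $\|w\|_{L^2(e)}^2\lesssim h_T^{-1}\|w\|_{L^2(T)}^2+h_T^{2s-1}|w|_{H^s(T)}^2$ with $1/2<s\le1$. This handles the averages of second derivatives of $(1-I_h)\varphi$. Applied with $s=\alpha$ to $w=D^2(1-I_h)\varphi$, and using $|D^2 I_h\varphi|_{H^\alpha(T)}=0$ (the second derivatives are constant), it gives
\[
|e|\,\|\smean{D^2(1-I_h)\varphi}\|_{L^2(e)}^2\lesssim h^{2\alpha}|\varphi|_{H^{2+\alpha}(T_\pm)}^2 .
\]
The jump term is handled by the same scaling argument. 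Because $\varphi\in H^2_0(\O)\cap H^{2+\alpha}(\O)$, the trace of $\nabla\varphi$ is single-valued on interior edges and vanishes on boundary edges, so $\sjump{\nabla(1-I_h)\varphi\cdot n_e}=-\sjump{\nabla I_h\varphi\cdot n_e}$. The trace inequality for $\nabla(1-I_h)\varphi$ with $s=1$, combined with the local $H^1$ and $H^2$ interpolation bounds of orders $h^{1+\alpha}$ and $h^\alpha$, yields $|e|^{-1}\|\cdot\|_{L^2(e)}^2\lesssim h^{2\alpha}|\varphi|^2_{H^{2+\alpha}}$. Summing over edges, with the finite overlap guaranteed by shape regularity, proves (i) and (ii) simultaneously. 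Statement (ii) is exactly this jump bound restricted to interior edges.

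For (iii), note that $\sjump{D^2\varphi\,n_e}=0$ on interior edges when $\varphi\in H^{2+\alpha}$, since $\alpha>1/2$ makes the trace of $D^2\varphi$ well defined from either side and the two traces agree. Hence $\sjump{D^2I_h\varphi\,n_e}=-\sjump{D^2(1-I_h)\varphi\,n_e}$, and the average estimate above bounds it.

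The main obstacle is the fractional trace inequality. Its scaling must produce exactly $h^{2\alpha}$, and $\alpha>1/2$ is essential for the traces to exist. I would first verify it on $\hat T$ using the trace theorem $H^{\alpha}(\hat T)\to L^2(\hat e)$ and then scale. All other steps are standard and follow the Kirchhoff plate analysis in \citep{brenner2012quadratic}.
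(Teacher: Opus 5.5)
Your proof is correct. It differs from the paper's in two places: it proves (i), which the paper only cites from \citep{brenner2012quadratic}, and it takes a different route for (iii).

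For (ii) you argue exactly as the paper does. Since $\nabla\varphi$ has no jump, $\sjump{\nabla I_h\varphi\cdot n_e}$ equals, up to sign, the jump of $\nabla(1-I_h)\varphi\cdot n_e$, and (i) applies.

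For (iii) the paper never uses traces of $D^2\varphi$. On each patch $Q_e=T_+\cup T_-$ it takes the Bramble--Hilbert polynomial $z_e\in\mathcal{P}_2(Q_e)$. Its Hessian is constant, hence jump-free, so the paper replaces $\sjump{D^2I_h\varphi\,n_e}$ by $\sjump{D^2(z_e-I_h\varphi)\,n_e}$. Because $D^2(z_e-I_h\varphi)$ is piecewise constant, an inverse estimate bounds the edge term by $\sum_{T\subset Q_e}|z_e-I_h\varphi|_{H^2(T)}^2$. A triangle inequality and the $H^2$ interpolation bound then finish.

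You argue instead that $D^2\varphi\in H^\alpha(Q_e)$ with $\alpha>1/2$ has coinciding one-sided traces on $e$. You then combine the scaled fractional trace inequality with $|D^2I_h\varphi|_{H^\alpha(T)}=0$, so (iii) comes out of the same elementwise bounds that give (i).

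The trade-off is this. The paper needs only a discrete inverse estimate and Bramble--Hilbert on two-triangle patches, and avoids fractional trace theory. Your route is self-contained and treats (i)--(iii) uniformly, but rests on the fractional trace inequality; you checked its scaling correctly.

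Two small points of precision. First, the average term in $\|\cdot\|_h$ does not by itself control the jump in (iii). What controls it are the one-sided bounds $|e|\,\|w_\pm\|_{L^2(e)}^2\lesssim h^{2\alpha}|\varphi|_{H^{2+\alpha}(T_\pm)}^2$ for the restrictions $w_\pm$ of $w:=D^2(1-I_h)\varphi$ to $T_\pm$, which you derive on the way, together with $\|\sjump{w}\|_{L^2(e)}\le\|w_+\|_{L^2(e)}+\|w_-\|_{L^2(e)}$. Second, when you sum over elements, use $\sum_{T}|\varphi|_{H^{2+\alpha}(T)}^2\le|\varphi|_{H^{2+\alpha}(\O)}^2$. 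This holds for the Slobodeckij seminorm because the sets $T\times T$ are essentially disjoint subsets of $\O\times\O$.
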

\begin{proof}
Parts (ii) and (iii) are proved here; see also
\citep[Eq.~(2.13), Lem.~2.5]{brenner2012quadratic}. The gradient of $\varphi$
has no jump across an interior edge, so (i) gives
\[
\sum_{e\in\E_h^i}|e|^{-1}\|\sjump{\nabla I_h\varphi\cdot n_e}\|_{L^2(e)}^2
=\sum_{e\in\E_h^i}|e|^{-1}\|\sjump{\nabla((1-I_h)\varphi)\cdot n_e}
\|_{L^2(e)}^2
\lesssim\|(1-I_h)\varphi\|_h^2
\lesssim h^{2\alpha}|\varphi|_{H^{2+\alpha}(\O)}^2 .
\]
Let $e\in\E_h^i$ and let $Q_e$ be the union of the two triangles that share
$e$. The Bramble--Hilbert lemma provides $z_e\in\mathcal{P}_2(Q_e)$ with
$|\eta-z_e|_{H^2(Q_e)}\le C|e|^\alpha|\eta|_{H^{2+\alpha}(Q_e)}$. The Hessian
of $z_e$ has no jump across $e$. A triangle inequality and an inverse estimate
give
\begin{align*}
\sum_{e\in\E_h^i}|e|\|\sjump{D^2(I_h\varphi)\,n_e}\|_{L^2(e)}^2
&=\sum_{e\in\E_h^i}|e|\|\sjump{D^2(z_e-I_h\varphi)\,n_e}\|_{L^2(e)}^2
\le C\sum_{e\in\E_h^i}\sum_{T\in\T_e}|z_e-I_h\varphi|_{H^2(Q_e)}^2\\
&\le C\sum_{e\in\E_h^i}\sum_{T\in\T_e}
\bigl(|z_e-\varphi|_{H^2(Q_e)}^2+|(1-I_h)\varphi|_{H^2(Q_e)}^2\bigr)\\
&\lesssim h^{2\alpha}\sum_{e\in\E_h^i}\sum_{T\in\T_e}
|\varphi|_{H^{2+\alpha}(Q_e)}^2
\lesssim h^{2\alpha}|\varphi|_{H^{2+\alpha}(\O)}^2 .
\qedhere
\end{align*}
\end{proof}

\begin{lem}[enrichment \citep{brenner2012quadratic}]\label{lem:enrich}
Let $\widetilde{V}_h$ be the $P_6$ Argyris space on $\T_h$. There is a linear
operator $E_h:V_h\to\widetilde{V}_h\cap H^2_0(\O)$ such that every
$\varphi_h\in V_h$ satisfies
\begin{enumerate}
\item[\rm(i)] $\|(1-E_h)\varphi_h\|_{L^2(\O)}+
h|(1-E_h)\varphi_h|_{H^1(\O)}+h^2|E_h\varphi_h|_{H^2(\O)}
\lesssim h^2\|\varphi_h\|_h$;
\item[\rm(ii)] $\sum_{e\in\E_h^i}|e|^{-1}\|\smean{\p(
(1-E_h)\varphi_h)/\p n_e}\|_{L^2(e)}^2\lesssim\|\varphi_h\|_h^2$;
\item[\rm(iii)] $\sum_{e\in\E_h^i}|e|^{-1}\|
\p((1-E_h)\varphi_h)/\p t_e\|_{L^2(e)}^2\lesssim\|\varphi_h\|_h^2$;
\item[\rm(iv)] $(E_h\varphi_h)(p)=\varphi_h(p)$ for all $p\in\cV_h$;
\end{enumerate}
and every $\varphi\in H^{2+\alpha}(\O)\cap H^2_0(\O)$ satisfies
\begin{enumerate}
\item[\rm(v)] $\sum_{m=0}^{2}h^m|\varphi-E_hI_h\varphi|_{H^m(\O)}
\lesssim h^{2+\alpha}|\varphi|_{H^{2+\alpha}(\O)}$;
\item[\rm(vi)] $\sum_{e\in\E_h^i}|e|^{-1}
\|\p(\varphi-E_hI_h\varphi)/\p n_e\|_{L^2(e)}^2
\lesssim h^{2\alpha}|\varphi|_{H^{2+\alpha}(\O)}^2$;
\item[\rm(vii)] $\sum_{e\in\E_h^i}|e|^{-1}
\|\p(\varphi-E_hI_h\varphi)/\p t_e\|_{L^2(e)}^2
\lesssim h^{2\alpha}|\varphi|_{H^{2+\alpha}(\O)}^2$.
\end{enumerate}
\end{lem}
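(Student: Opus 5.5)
The enrichment operator $E_h$ will be built by prescribing the degrees of freedom of the $P_6$ Argyris element through averaging, and properties (i)--(vii) will then follow from a local norm-equivalence argument combined with scaling.

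\textbf{Construction.} For every degree of freedom $N$ of the Argyris space (vertex values, first and second derivatives at vertices, normal derivatives at edge nodes), set $N(E_h\varphi_h)$ as follows.

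\begin{itemize}
\item If $N$ is a point value at a vertex $p$, take $\varphi_h(p)$; this is well defined by continuity and gives (iv) directly.
\item Otherwise, take the average of $N(\varphi_h|_T)$ over the triangles $T$ that contain the node. A derivative of $\varphi_h|_T$ is computed from the local polynomial.
\item Every degree of freedom located on $\partial\O$ is set to zero, so that $E_h\varphi_h\in H^2_0(\O)$.
\end{itemize}

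\textbf{Local estimate.} On a triangle $T$, the difference $\varphi_h|_T-(E_h\varphi_h)|_T$ is a polynomial of degree at most $6$. Equivalence of norms on finite-dimensional spaces together with scaling bounds it by $h_T$-weighted sums of the differences of the nodal functionals. Each such difference is controlled by jumps of derivatives of $\varphi_h$ across the edges and vertices of the patch $\omega_T$.

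\begin{itemize}
\item Jumps of first derivatives reduce to the normal-derivative jump, since the tangential derivative is continuous. Each is bounded by $|e|^{-1/2}\|\sjump{\p\varphi_h/\p n}\|_{L^2(e)}$, times the appropriate power of $h$.
\item Jumps of second derivatives at vertices are bounded by the Hessian-mean and piecewise-Hessian terms of $\|\cdot\|_h$, through inverse estimates on edges.
\item At boundary nodes, the value $\varphi_h=0$ and the boundary jump term $\sjump{\nabla\varphi_h\cdot n_e}=\p\varphi_h/\p n$ play the same role.
\end{itemize}

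Summing over $T$ with finite overlap of the patches yields
\[
\sum_T h_T^{-4}\|(1-E_h)\varphi_h\|_{L^2(T)}^2+h_T^{-2}|(1-E_h)\varphi_h|_{H^1(T)}^2+|(1-E_h)\varphi_h|_{H^2(T)}^2\lesssim\|\varphi_h\|_h^2 .
\]
This gives (i), where $|E_h\varphi_h|_{H^2}\le|\varphi_h|_{H^2,\pw}+|(1-E_h)\varphi_h|_{H^2,\pw}$ is used for the last term. Properties (ii) and (iii) then follow from the trace inequality $\|w\|_{L^2(e)}^2\lesssim h^{-1}\|w\|_{L^2(T)}^2+h|w|_{H^1(T)}^2$ applied to $\nabla((1-E_h)\varphi_h)$, combined with the $H^1$ and $H^2$ bounds.

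\textbf{Approximation properties (v)--(vii).} Write $\varphi-E_hI_h\varphi=(\varphi-I_h\varphi)+(1-E_h)I_h\varphi$. The first term is controlled by standard $P_2$ interpolation and by \Cref{lem:P2interp}(i). For the second term, the local estimate above involves only jumps, so it is bounded by the jump terms of $I_h\varphi$. By \Cref{lem:P2interp}(ii)--(iii) these are $O(h^{\alpha})$, and the extra powers of $h$ supply $h^{2+\alpha}$ in $L^2$. For (vi) and (vii), apply the trace inequality on each edge to this decomposition.

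\textbf{Main obstacle.} The delicate step is the local estimate for the second-derivative degrees of freedom at vertices. It requires bounding the pointwise jumps of piecewise-constant Hessians across all edges meeting at a vertex using only the seminorm $\|\cdot\|_h$. The Hessian of $\varphi_h$ is piecewise constant, so each such jump equals its edge average. The jump of the full Hessian is not a term of $\|\cdot\|_h$. However, $|\sjump{D^2\varphi_h}|\le2(|(D^2\varphi_h)_+|+|(D^2\varphi_h)_-|)$, and each side is controlled by $|\varphi_h|_{H^2(T)}$ through scaling: $|e|\,|D^2\varphi_h|_T|^2\approx h^{-1}\|D^2\varphi_h\|_{L^2(T)}^2$. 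Hence the $h^4$-scaling of the second-derivative degrees of freedom suffices.

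The same crude bound would give only $\|(1-E_h)\varphi_h\|_{L^2}\lesssim h^2\|\varphi_h\|_h$, with no convergence order. This is acceptable for (i), but not for (v). For (v) the sharper jump bounds of \Cref{lem:P2interp}(iii) are essential, and I would make sure the local estimate is written in terms of actual jumps rather than one-sided values.
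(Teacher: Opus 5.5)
The paper does not prove this lemma; it quotes it from Brenner et al. So your averaging construction is the natural comparison, and it does give (i)--(iv). There the crude bound $h_T|D^2\varphi_h|_T|\lesssim|\varphi_h|_{H^2(T)}$ for the second-order vertex degrees of freedom is legitimate. The interior part of (v)--(vii) also goes through, once you note that $\sjump{\partial^2 I_h\varphi/\partial t^2}=0$. The full Hessian jump across an interior edge is then controlled by $\sjump{D^2 I_h\varphi\,n_e}$.

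The gap is in (v)--(vii), and it comes from your rule that \emph{every} degree of freedom on $\partial\Omega$ is set to zero. At a boundary vertex $p$ this includes all three second derivatives. For $\varphi_h=I_h\varphi$ the local difference therefore contains the one-sided value $D^2(I_h\varphi)|_T(p)$. This is not a jump, and \Cref{lem:P2interp}(ii)--(iii) do not cover it, since they concern interior edges only. It is also not small. Membership $\varphi\in H^2_0(\Omega)$ forces $\partial^2\varphi/\partial t^2=\partial^2\varphi/\partial n\partial t=0$ on a side of $\Omega$, but not $\partial^2\varphi/\partial n^2=0$; take $(\tfrac14-x^2)^2(\tfrac14-y^2)^2$ on the square.

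Since $D^2((1-E_h)I_h\varphi)(p)=D^2(I_h\varphi)|_T(p)$, an inverse estimate gives $|(1-E_h)I_h\varphi|_{H^2(T)}\gtrsim\|D^2 I_h\varphi\|_{L^2(T)}$ on every triangle with a vertex on $\partial\Omega$. Summing over this strip of width $\mathcal{O}(h)$ gives $|(1-E_h)I_h\varphi|_{H^2(\Omega)}\gtrsim h^{1/2}\|\partial^2\varphi/\partial n^2\|_{L^2(\partial\Omega)}$ for smooth $\varphi$, which is incompatible with $h^{\alpha}$ for $\alpha>1/2$. Likewise the sums in (vi)--(vii) are $\mathcal{O}(h)$ rather than $\mathcal{O}(h^{2\alpha})$, and the $L^2$ part is $h^{5/2}$ rather than $h^{2+\alpha}$. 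Your plan to write the local estimate ``in terms of actual jumps'' cannot be carried out at these vertices with your construction.

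What is missing is a boundary rule that imposes only what $H^2_0(\Omega)$ requires. On an edge $e\subset\partial\Omega$, the conditions $w|_e=0$ and $\partial w/\partial n|_e=0$ involve at the endpoints only $w$, $\nabla w$, $\partial^2 w/\partial t^2$ and $\partial^2 w/\partial n\partial t$, besides the edge degrees of freedom.

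\emph{Boundary vertex inside a side of $\Omega$.} Set those quantities to zero, but define $\partial^2(E_h\varphi_h)/\partial n^2(p)$ by averaging, as at interior vertices. The remaining one-sided quantities are then harmless. First, $\partial^2(I_h\varphi)|_T/\partial t^2=0$ on a boundary edge of $T$. Second, $h_T^2|\partial^2(I_h\varphi)|_T/\partial n\partial t|^2\lesssim|e|^{-1}\|\partial(I_h\varphi)/\partial n\|^2_{L^2(e)}$. This is a boundary term of $\|(1-I_h)\varphi\|_h$, because $\partial\varphi/\partial n=0$ on $\partial\Omega$. So you need \Cref{lem:P2interp}(i) with its boundary edges, not only (ii)--(iii); the same holds for the first-order boundary degrees of freedom.

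\emph{Corner $c$ of $\Omega$.} The two non-parallel sides force $D^2(E_h\varphi_h)(c)=0$, so a one-sided term survives, but only on the finitely many corner patches $\omega_c$. There you need $\|D^2 I_h\varphi\|_{L^2(\omega_c)}\lesssim h^{\alpha}\|\varphi\|_{H^{2+\alpha}(\Omega)}$. This follows from $|\varphi-I_h\varphi|_{H^2(\omega_c)}\lesssim h^\alpha$ together with $\|D^2\varphi\|_{L^2(\omega_c)}\lesssim h^\alpha$. The latter comes from H\"older and $H^\alpha\hookrightarrow L^{2/(1-\alpha)}$ if $\alpha<1$. For $\alpha=1$ use a scaled Friedrichs inequality on the sector, with $D^2\varphi\,t_j=0$ on side $j$.

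A $C^1$ space without second-order vertex degrees of freedom, such as Hsieh--Clough--Tocher, avoids this issue entirely. There, zeroing all boundary degrees of freedom is harmless.
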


Property $\rm(iv)$ is what makes $E_h$ compatible with the obstacle
constraint: since $u_h\in K_h$ satisfies $u_h(p)\ge\chi(p)$ at every vertex,
$\rm(iv)$ gives $(E_hu_h)(p)=u_h(p)\ge\chi(p)$, so
\begin{align}\label{EhKh}
 E_hK_h\subset K_A
\end{align}
for the set $K_A$ of \Cref{sec:err-est}. This inclusion licenses the choice of
$E_hu_h$ as a test function in \eqref{c0ipwformA}.

\begin{lem}[\rm {Discrete embeddings}]\label{lem:emb-discrete}
\noindent It holds that
$$(i)\,\|\phi\|_{L^\infty(\O)}\lesssim\|\phi\|_h
\text{ for all }\phi\in H^2_0(\O)+V_h,\quad
(ii)\,\|\phi_h\|_{L^2(\O)}\lesssim\|\phi_h\|_h
\text{ for all }\phi_h\in V_h .$$
\end{lem}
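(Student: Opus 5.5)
The plan is to reduce both embeddings to their continuous counterparts \eqref{ctsembedding} by way of the enrichment operator of \Cref{lem:enrich}. For (ii), take $\phi_h\in V_h$ and split $\phi_h=E_h\phi_h+(1-E_h)\phi_h$. Since $E_h\phi_h\in H^2_0(\O)$, the Friedrichs inequality in \eqref{ctsembedding} gives $\|E_h\phi_h\|_{L^2(\O)}\le C_{\rm F}\trinl E_h\phi_h\trinr$. The $H^2$ seminorm of $E_h\phi_h$ is controlled by \Cref{lem:enrich}(i), because $h^2|E_h\phi_h|_{H^2(\O)}\lesssim h^2\|\phi_h\|_h$, so $|E_h\phi_h|_{H^2(\O)}\lesssim\|\phi_h\|_h$. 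The remainder satisfies $\|(1-E_h)\phi_h\|_{L^2(\O)}\lesssim h^2\|\phi_h\|_h\lesssim\|\phi_h\|_h$, again by \Cref{lem:enrich}(i), since $h\le\operatorname{diam}(\O)$. Adding the two bounds proves (ii).

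For (i), first consider $\phi_h\in V_h$ and use the same splitting. The continuous Sobolev embedding gives $\|E_h\phi_h\|_{L^\infty(\O)}\le C_{\rm S}\trinl E_h\phi_h\trinr\lesssim\|\phi_h\|_h$. The remainder $w:=(1-E_h)\phi_h$ is piecewise polynomial of degree at most six on each $T$, since both $\phi_h$ and $E_h\phi_h$ are. The inverse estimate $\|w\|_{L^\infty(T)}\lesssim h_T^{-1}\|w\|_{L^2(T)}$ in two dimensions, together with $\|w\|_{L^2(T)}\le\|w\|_{L^2(\O)}\lesssim h^2\|\phi_h\|_h$, gives $\|w\|_{L^\infty(\O)}\lesssim h_{\min}^{-1}h^2\|\phi_h\|_h$. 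On a quasi-uniform mesh this is at most $h\|\phi_h\|_h$.

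The inverse estimate introduces $h_{\min}^{-1}$, and this is the step I expect to be the main obstacle. To avoid a quasi-uniformity assumption I will localise. The construction of $E_h$ in \citep{brenner2012quadratic} averages nodal values of neighbouring elements, so a local version of \Cref{lem:enrich}(i) holds: $\|w\|_{L^2(T)}\lesssim h_T^2\bigl(\sum_{T'\subset\omega_T}|\phi_h|_{H^2(T')}^2+\text{edge terms on }\omega_T\bigr)^{1/2}$, where $\omega_T$ is the patch of $T$. Combining this with the local inverse estimate gives $\|w\|_{L^\infty(T)}\lesssim h_T\|\phi_h\|_h$ uniformly in $T$. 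If only the global statement of \Cref{lem:enrich} is allowed, I will instead state the result for quasi-uniform meshes.

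Next take a general $\phi=\psi+\phi_h$ with $\psi\in H^2_0(\O)$ and $\phi_h\in V_h$. Apply the splitting to $\phi_h$ only: $\phi=(\psi+E_h\phi_h)+(1-E_h)\phi_h$. The first summand lies in $H^2_0(\O)$, and its $L^\infty$ norm is bounded by $C_{\rm S}\trinl\psi+E_h\phi_h\trinr$. The jumps of $\nabla\psi$ vanish and the mean of $D^2\psi$ on an edge is its trace, so $\|\phi\|_h$ is defined and the jump terms of $\|\phi\|_h$ coincide with those of $\|\phi_h\|_h$.

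What remains is to bound $\trinl\psi+E_h\phi_h\trinr$ and $\|(1-E_h)\phi_h\|_{L^\infty(\O)}$ by $\|\phi\|_h$ rather than by $\|\phi_h\|_h$. Writing $\psi+E_h\phi_h=\phi-(1-E_h)\phi_h$ gives $\trinl\psi+E_h\phi_h\trinr\le\trinl\phi\trinr_{\pw}+\trinl(1-E_h)\phi_h\trinr_{\pw}$. Estimating $\trinl(1-E_h)\phi_h\trinr_{\pw}$ by the full $\|\phi_h\|_h$ is not enough, because the piecewise Hessian of $\phi_h$ is not controlled by $\|\phi\|_h$. The key point is that $(1-E_h)\phi_h$ is controlled by the jump terms of $\phi_h$ alone. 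By the construction in \citep{brenner2012quadratic}, $\trinl(1-E_h)\phi_h\trinr_{\pw}^2\lesssim\sum_{e}|e|^{-1}\|\sjump{\p\phi_h/\p n}\|_{L^2(e)}^2$, and the same quantity bounds the $L^\infty$ remainder. Those jumps equal the jumps of $\phi$, hence are bounded by $\|\phi\|_h$. This jump-only bound is not among the listed properties, so I will cite it from the enrichment construction.
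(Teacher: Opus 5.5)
Your argument for $\phi_h\in V_h$ (the conforming part via $C_{\rm S}$, the remainder via inverse estimates) is the standard argument behind \citep[Lem.~3.7]{brenner2017c}, and that citation is all the paper gives for (i). For (ii) the paper does not use the enrichment at all. It combines (i) with $\|\phi_h\|_{L^2(\O)}\le|\O|^{1/2}\|\phi_h\|_{L^\infty(\O)}$. Your Friedrichs argument for (ii) is correct given \Cref{lem:enrich}(i), only longer.

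There is a genuine gap in the step you flag yourself, the case $\phi=\psi+\phi_h$. You need the jump-only bound $\trinl(1-E_h)\phi_h\trinr_{\pw}^2\lesssim\sum_{e}|e|^{-1}\|\sjump{\p\phi_h/\p n}\|_{L^2(e)}^2$, and the operator of \Cref{lem:enrich} cannot provide it. In fact no operator into the Argyris space satisfies it uniformly over shape-regular meshes:
\begin{itemize}
\item If all normal-derivative jumps of $\phi_h$ vanish, boundary edges included, then $\phi_h\in H^2_0(\O)$.
\item The bound then makes $(1-E_h)\phi_h$ piecewise affine and in $H^2_0(\O)$, hence zero.
\item So $\phi_h=E_h\phi_h$ would be $C^2$ at every vertex.
\item But $V_h\cap H^2_0(\O)$ can contain $C^1$ piecewise quadratics with Hessian jumps, hence not $C^2$ at the vertices. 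Zwart--Powell box splines on a four-directional mesh are an example.
\end{itemize}
This is why \Cref{lem:enrich}(i) has the full $\|\varphi_h\|_h$ on its right-hand side, broken Hessian included. That is exactly the quantity you correctly note is not controlled by $\|\phi\|_h$.

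The fix is to use, for this lemma only, the Hsieh--Clough--Tocher enriching operator of Brenner and Sung. Property (iv) is not needed here. That operator averages only vertex gradients and edge normal derivatives, whose differences are jumps of $\nabla\phi_h$, so it satisfies
\[
\sum_{T\in\T_h}\sum_{m=0}^{2}h_T^{2m-4}|\phi_h-E_h\phi_h|_{H^m(T)}^2\lesssim\sum_{e\in\E_h}|e|^{-1}\|\sjump{\p\phi_h/\p n}\|_{L^2(e)}^2 .
\]
This single estimate settles both of your open points:
\begin{itemize}
\item With the local inverse estimate it gives $\max_T\|(1-E_h)\phi_h\|_{L^\infty(T)}\lesssim h_{\max}$ times the jump term. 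No quasi-uniformity is needed, so your fallback to quasi-uniform meshes is unnecessary.
\item It bounds $\trinl(1-E_h)\phi_h\trinr_{\pw}$ by the jumps of $\phi_h$, which coincide with those of $\phi$.
\end{itemize}

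The resulting bound is $\|\phi\|_{L^\infty(\O)}\lesssim\trinl\phi\trinr_{\pw}+\bigl(\sum_{e}|e|^{-1}\|\sjump{\p\phi/\p n}\|_{L^2(e)}^2\bigr)^{1/2}$. This is the right form anyway. For $\psi\in H^2_0(\O)$ alone, $D^2\psi$ has no trace on edges, so, contrary to your remark, the averaged-Hessian term of $\|\phi\|_h$ is not defined on all of $H^2_0(\O)+V_h$.
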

\begin{proof}
The first estimate follows as in \citep[Lem.~3.7]{brenner2017c}. The second
follows from the first and from
$\|\bullet\|_{L^2(\O)}\lesssim\|\bullet\|_{L^\infty(\O)}$.
\end{proof}

The constants in \Cref{thm:disc-sob} enter the smallness condition of
\Cref{rem:disc-smallness}.

\begin{thm}[Discrete Sobolev and Friedrichs inequalities]
\label{thm:disc-sob}
\noindent Recall the elliptic regularity index $\alpha$. For $0<\alpha<1$, set
$\beta=\alpha$, and for $1\le\alpha$ and any $0<\epsilon<1$, set
$\beta=1-\epsilon$. Then there exist positive constants $C(\beta)$ and
$C(\alpha)$ for which the discrete embedding constants
$C_{\rm dS}:=C_{\rm S}+C(\beta)h_{\max}^{\beta}$ and
$C_{\rm dF}:=C_{\rm F}+C(\alpha)h_{\max}^{\alpha}$ satisfy $(a)$-$(b)$ for any
$v+v_h\in H^2_0(\O)+V_h$.
$$(a)\,\|v+v_h\|_{L^\infty(\O)}\le C_{\rm dS}\|v+v_h\|_h,
\qquad(b)\,\|v+v_h\|_{L^2(\O)}\le C_{\rm dF}\|v+v_h\|_h.$$
\end{thm}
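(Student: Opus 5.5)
The plan is to combine the enrichment operator $E_h$ of \Cref{lem:enrich} with the continuous embeddings \eqref{ctsembedding}. Write $\phi:=v+v_h$ with $v\in H^2_0(\O)$ and $v_h\in V_h$, and split
\[
\phi=(v+E_hv_h)+(v_h-E_hv_h).
\]
The first summand lies in $H^2_0(\O)$, so \eqref{ctsembedding} gives
\[
\|v+E_hv_h\|_{L^\infty(\O)}\le C_{\rm S}\trinl v+E_hv_h\trinr,\qquad
\|v+E_hv_h\|_{L^2(\O)}\le C_{\rm F}\trinl v+E_hv_h\trinr .
\]
The remaining tasks are to compare $\trinl v+E_hv_h\trinr$ with $\|\phi\|_h$ and to bound the remainder $(1-E_h)v_h$ by a positive power of $h_{\max}$ times $\|\phi\|_h$.

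\textbf{Step 1 (the $H^2_0$ part).} I would write
\[
\trinl v+E_hv_h\trinr\le\trinl\phi\trinr_{\pw}+|(1-E_h)v_h|_{H^2(\O,\T_h)},
\]
and use $\trinl\phi\trinr_{\pw}\le\|\phi\|_h$, which holds because the piecewise $H^2$ seminorm is the first term of \eqref{discretenorm}. For the second term, \Cref{lem:enrich}(i) only gives $|(1-E_h)v_h|_{H^2}\lesssim\|v_h\|_h$, with no power of $h$, and the leading constant must be exactly $C_{\rm S}$. I would therefore apply $E_h$ to the whole sum rather than only to $v_h$. Since $\phi\notin V_h$, I would first replace $v$ by an $H^2_0$-conforming quasi-interpolant or a smoothed version $v_\delta$, and write $\phi=(v-v_\delta)+(v_\delta+v_h)$. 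The aim is to obtain $\trinl E_h(\cdot)\trinr\le(1+Ch^{\beta})\|\cdot\|_h$, or an additive bound of the form $\trinl\cdot\trinr\le\|\phi\|_h+$ a term that carries a power of $h$.

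\textbf{Step 2 (the remainder).} For $w_h:=(1-E_h)v_h$, \Cref{lem:enrich}(i) gives $\|w_h\|_{L^2(\O)}\lesssim h^2\|v_h\|_h$. For the $L^\infty$ bound I would use an elementwise inverse estimate, $\|w_h\|_{L^\infty(T)}\lesssim h_T^{-1}\|w_h\|_{L^2(T)}$ for the piecewise polynomial $w_h$, which yields $\|w_h\|_{L^\infty(\O)}\lesssim h\|v_h\|_h$. It then remains to bound $\|v_h\|_h$ by $\|\phi\|_h$. This is not immediate, because $\|v\|_h$ may be large while $\phi$ is small.

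I expect this to be the main obstacle, and I would resolve it by proving the estimate first for $\phi$ in the dense subspace $H^{2+\alpha}_0(\O)+V_h$ and then passing to the limit. For such $\phi$ I would use $I_h$ and \Cref{lem:P2interp}(i), writing
\[
\phi=(v-I_hv)+(I_hv+v_h),
\]
with $\|v-I_hv\|_h\lesssim h^\alpha|v|_{2+\alpha}$, and apply Step 2 to $I_hv+v_h\in V_h$. This is where $h^\alpha$ enters $C_{\rm dF}$ and $h^\beta$ enters $C_{\rm dS}$. In the $L^\infty$ case, the fractional Sobolev embedding $H^{1+\beta}\hookrightarrow L^\infty$ is valid only for $\beta<1$, which explains why $\beta=1-\epsilon$ when $\alpha=1$.

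The hardest part is keeping the leading constants exactly $C_{\rm S}$ and $C_{\rm F}$ while confining all $h$-dependence to additive terms. The estimates have to be organised so that constants depending on $v$ are not absorbed into $C(\beta)$. If this turns out to be impossible, I would settle for constants that depend on the data, consistent with the convention for $\lesssim$.
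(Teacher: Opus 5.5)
\paragraph{Gaps in your proposal.} The proposal leaves both decisive steps open, and the fixes you suggest do not close them.

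Step~1 rests on a bound $\trinl v+E_hv_h\trinr\le(1+Ch_{\max}^{\beta})\|v+v_h\|_h$, or on your aim $\trinl E_h(\bullet)\trinr\le(1+Ch^{\beta})\|\bullet\|_h$. \Cref{lem:enrich} supplies neither.
\begin{itemize}
\item $E_h$ is stable only up to an unspecified mesh-independent constant.
\item The defect $(1-E_h)v_h$ in the broken $H^2$ seminorm is at best controlled by the jump part $\bigl(\sum_{e\in\E_h}|e|^{-1}\|\sjump{\p v_h/\p n}\|_{L^2(e)}^2\bigr)^{1/2}$. That quantity equals the jump part of $v+v_h$. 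It is therefore bounded only by $\sigma^{-1/2}\|v+v_h\|_h$ and carries no power of $h$.
\item The obstruction is already present for $v=0$, so smoothing $v$ does not touch it.
\end{itemize}

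The density rescue in Step~2 is not admissible either. The bound $\|v-I_hv\|_h\lesssim h^\alpha|v|_{2+\alpha}$ measures the remainder by a norm of $v$, not of $v+v_h$. The ``constant'' then depends on the function being estimated. The passage to the limit also fails, because $|v_n|_{2+\alpha}$ is unbounded along approximations of a general $v\in H^2_0(\O)$. Data-dependent constants would yield a non-homogeneous inequality that cannot serve in \Cref{lem:bIPbound} or \Cref{thm:disc-existence}.

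Your argument also contains no dual problem, so nothing in it separates $h^\beta$ in $C_{\rm dS}$ from $h^\alpha$ in $C_{\rm dF}$. In the paper, $\beta$ is the regularity of the solution $z\in H^{2+\beta}(\O)$ of the biharmonic problem with an $L^1$ datum $\varphi$. That datum lies only in $H^{-(1+\epsilon)}(\O)$ because $H^1(\O)$ does not embed into $L^\infty(\O)$. Your stated reason is wrong: $H^{1+\beta}(\O)\hookrightarrow L^\infty(\O)$ holds for every $\beta>0$.

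\paragraph{The paper's device, and a caveat.} The missing idea is the paper's energy projection, used in place of an enrichment to retain the leading constant. Let $w\in H^2_0(\O)$ solve $a(w,\psi)=a_{\rm pw}(v+v_h,\psi)$ for all $\psi\in H^2_0(\O)$. Then $\trinl w\trinr\le\trinl v+v_h\trinr_{\pw}\le\|v+v_h\|_h$ with constant one, so $\|w\|_{L^\infty(\O)}\le C_{\rm S}\|v+v_h\|_h$. The remainder $\delta:=v+v_h-w$ is never split into $v$ and $v_h$. It enters only through $\|\delta\|_h\lesssim\|v+v_h\|_h$, and it is tested against the dual solution $z$ via $\delta=(1-I_h)\delta+(1-E_h)I_h\delta+E_hI_h\delta$.

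Your doubts about an exact leading constant are justified at precisely this point. The paper's identity $a(z,E_hI_h\delta)=a_1(z,(E_hI_h-1)\delta)$ needs $a_1(z,\delta)=0$. The construction gives only $a_1(\delta,\bullet)=0$ on $H^2_0(\O)$, and $a_1$ is not symmetric. In fact
\[
a_1(z,\delta)=\sum_{e\in\E_h^i}\int_e\frac{\p^2 z}{\p n^2}\,\sjump{\p v_h/\p n}\ds ,
\]
which need not be small, since jumps of $P_2$ Lagrange functions need not have vanishing edge means.

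Here is a concrete case. Take $v=0$ and let $v_h$ be the piecewise linear interpolant of a fixed bump $q\in C_c^\infty(\O)$.
\begin{itemize}
\item Then $w=0$ and $\delta=v_h$.
\item Moreover $\|v_h\|_h^2=\sigma\sum_{e\in\E_h}|e|^{-1}\|\sjump{\p v_h/\p n}\|_{L^2(e)}^2\le C_q\sigma$ uniformly in $h$.
\item Meanwhile $\|v_h\|_{L^\infty(\O)}\to\|q\|_{L^\infty(\O)}$.
\end{itemize}
Hence $\|\delta\|_{L^\infty(\O)}$ is not $\mathcal{O}(h^\beta)\|v+v_h\|_h$. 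For small $\sigma$ the quotient $\|v_h\|_{L^\infty(\O)}/\|v_h\|_h$ even stays above $C_{\rm S}$ as $h_{\max}\to0$, contrary to (a).

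What one can expect to prove is a $\sigma$-dependent leading constant, such as $C_{\rm S}(1+C\sigma^{-1/2})$. For that bound your splitting into $v+E_hv_h$ and $(1-E_h)v_h$ does work, provided the enrichment defect is bounded by the normal-derivative jumps alone. Those jumps coincide for $v_h$ and $v+v_h$, so no density argument is needed.
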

\begin{proof}
The two inequalities follow as in
\citep[Lem.~4.7]{carstensen2017nonconforming}. The proof below gives the stated
form of the constants.

\emph{The first inequality.} Let $v\in H^2_0(\O)$ and $v_h\in V_h$. The
function $v+v_h$ is piecewise uniformly continuous, so there is a
$\varphi\in L^1(\O)$ with $\|\varphi\|_{L^1(\O)}=1$ and
$\|v+v_h\|_{L^\infty(\O)}=\int_\O(v+v_h)\varphi\,\dx$. Let $z\in H^2_0(\O)$
solve
\begin{equation}\label{z.solve}
a(z,\bullet)=\langle\varphi,\bullet\rangle_{L^1(\O)} .
\end{equation}
The embedding $H^{1+\epsilon}(\O)\hookrightarrow L^\infty(\O)$ gives
$\langle\varphi,\bullet\rangle_{L^1(\O)}\in H^{-(1+\epsilon)}(\O)$, and the
shift theorem \citep[Thm.~8]{bacuta2002shift} gives $z\in H^{2+\beta}(\O)$ with
\begin{equation}\label{sup.ds}
\|z\|_{2+\beta}\lesssim\|\langle\varphi,\bullet\rangle_{L^1(\O)}
\|_{H^{-(1+\epsilon)}}
=\sup_{0\neq\psi\in H^{1+\epsilon}_0(\O)}
\frac{(\varphi,\psi)_{L^2(\O)}}{\|\psi\|_{L^\infty(\O)}}
\frac{\|\psi\|_{L^\infty(\O)}}{\|\psi\|_{1+\epsilon}}
\le C(\beta,\O).
\end{equation}
Define
\[
a_1(\varphi_1,\varphi_2):=\sit D^2\varphi_1:D^2\varphi_2\,\dx
+\sum_{e\in\E_h^i}\int_e\smean{\p^2\varphi_1/\p n^2}
\sjump{\p\varphi_2/\p n}\,\ds ,
\]
let $w\in H^2_0(\O)$ solve $a(w,\bullet)=a_1(v+v_h,\bullet)$ in $H^{-2}(\O)$
and set $\delta:=v+v_h-w$, so that $a_1(\delta,\bullet)=0$ on $H^2_0(\O)$. Then
$\|\varphi\|_{L^1(\O)}=1$ and the Sobolev constant $C_{\rm S}$ give
\begin{align*}
(v+v_h,\varphi)_{L^2(\O)}
&=(w,\varphi)_{L^2(\O)}+(v+v_h-w,\varphi)_{L^2(\O)}
\le\|w\|_{L^\infty(\O)}+(v+v_h-w,\varphi)_{L^2(\O)}\\
&\le C_{\rm S}\trinl w\trinr+(\delta,\varphi)_{L^2(\O)}
\le C_{\rm S}\|v+v_h\|_h+(\delta,\varphi)_{L^2(\O)} .
\end{align*}
Since $E_h(I_h\delta)\in H^2_0(\O)$, the H\"older inequality with
\eqref{z.solve}, inverse estimates, a triangle inequality and the estimates of
\Cref{lem:P2interp} and \Cref{lem:enrich} give
\begin{align*}
(\delta,\varphi)_{L^2(\O)}
&=((1-I_h)\delta,\varphi)_{L^2(\O)}
+((1-E_h)I_h\delta,\varphi)_{L^2(\O)}
+(E_h(I_h\delta),\varphi)_{L^2(\O)}\\
&\le\|(1-I_h)\delta\|_{L^\infty(\O)}
+\|(1-E_h)I_h\delta\|_{L^\infty(\O)}+a(z,E_h(I_h\delta))\\
&\le\max_{T\in\T_h}\|(1-I_h)\delta\|_{L^\infty(T)}
+\max_{T\in\T_h}\|(1-E_h)I_h\delta\|_{L^\infty(T)}+a(z,E_h(I_h\delta))\\
&\lesssim h_{\max}\bigl(\|\delta\|_h+\|I_h\delta\|_h\bigr)
+a(z,E_h(I_h\delta))
\lesssim h_{\max}\|v+v_h\|_h+a(z,E_h(I_h\delta)) .
\end{align*}
The definition of $E_h$ and \Cref{lem:enrich-sol} give, for the last term,
\begin{align*}
a(z,E_h(I_h\delta))
&=a_1(z,(E_hI_h-1)\delta)
=a_1(z-I_hz,(E_hI_h-1)\delta)+a_1(I_hz,(E_hI_h-1)\delta)\\
&\lesssim\|z-I_hz\|_h\|(E_hI_h-1)\delta\|_h+a_1(I_hz,(E_hI_h-1)\delta)\\
&\lesssim h_{\max}^\alpha|z|_{2+\alpha}\|\delta\|_h
+a_1(I_hz,(E_hI_h-1)\delta)
\lesssim h_{\max}^\alpha|z|_{2+\alpha}\|v+v_h\|_h
+a_1(I_hz,(E_hI_h-1)\delta).
\end{align*}
Integration by parts and the Cauchy--Schwarz inequality applied to the
remaining term give
\begin{align*}
&a_1(I_hz,(E_hI_h-1)\delta)\\
&\quad=\sit D^2I_hz:D^2((E_hI_h-1)\delta)\,\dx
+\sum_{e\in\E_h^i}\int_e\smean{\frac{\p^2I_hz}{\p n^2}}
\sjump{\frac{\p((E_hI_h-1)\delta)}{\p n}}\,\ds\\
&\quad=-\sum_{e\in\E_h^i}\int_e\sjump{\frac{\p^2I_hz}{\p n^2}}
\smean{\frac{\p((E_hI_h-1)\delta)}{\p n}}\,\ds
-\sum_{e\in\E_h^i}\int_e\sjump{\frac{\p^2I_hz}{\p n\p t}}
\smean{\frac{\p((E_hI_h-1)\delta)}{\p t}}\,\ds\\
&\quad\lesssim\biggl(\sum_{e\in\E_h^i}|e|
\bigl\|\sjump{\p^2I_hz/\p n^2}\bigr\|_{L^2(e)}^2\biggr)^{1/2}
\biggl(\sum_{e\in\E_h^i}|e|^{-1}
\bigl\|\smean{\p((E_hI_h-1)\delta)/\p n}\bigr\|_{L^2(e)}^2\biggr)^{1/2}\\
&\qquad+\biggl(\sum_{e\in\E_h^i}|e|
\bigl\|\sjump{\p^2I_hz/\p n\p t}\bigr\|_{L^2(e)}^2\biggr)^{1/2}
\biggl(\sum_{e\in\E_h^i}|e|^{-1}
\bigl\|\p((E_hI_h-1)\delta)/\p t\bigr\|_{L^2(e)}^2\biggr)^{1/2}
=:AB+CD.
\end{align*}
\Cref{lem:P2interp} bounds the first factor,
\[
A^2=\sum_{e\in\E_h^i}|e|\bigl\|\sjump{\p^2I_hz/\p n^2}\bigr\|_{L^2(e)}^2
\lesssim h^{2\alpha}|z|_{H^{2+\alpha}(\O)}^2 ,
\]
and an inverse estimate followed by \Cref{lem:P2interp} bounds the third,
\[
C^2=\sum_{e\in\E_h^i}|e|\bigl\|\sjump{\p^2I_hz/\p n\p t}\bigr\|_{L^2(e)}^2
\lesssim\sum_{e\in\E_h^i}|e|^{-1}
\bigl\|\sjump{\p I_hz/\p n}\bigr\|_{L^2(e)}^2
\lesssim h^{2\alpha}|z|_{H^{2+\alpha}(\O)}^2 .
\]
\Cref{lem:enrich}, the triangle and trace inequalities and the estimates of
$\delta$ bound the two remaining factors,
\begin{align*}
B^2&\lesssim\sum_{e\in\E_h^i}|e|^{-1}
\bigl\|\smean{\p((E_h-1)I_h\delta)/\p n}\bigr\|_{L^2(e)}^2
+\sum_{e\in\E_h^i}|e|^{-1}
\bigl\|\smean{\p((I_h-1)\delta)/\p n}\bigr\|_{L^2(e)}^2\\
&\lesssim\|I_h\delta\|_h^2+\sum_{T\in\T_h}
\bigl(h_T^{-2}\|(I_h-1)\delta\|_{H^1(T)}^2
+\|(I_h-1)\delta\|_{H^2(T)}^2\bigr)
\lesssim\|\delta\|_h+\sum_{T\in\T_h}\|\delta\|_{H^2(T)}^2
\lesssim\|v+v_h\|_h,\\
D^2&\lesssim\sum_{e\in\E_h^i}|e|^{-1}
\bigl\|\p((E_h-1)I_h\delta)/\p t\bigr\|_{L^2(e)}^2
+\sum_{e\in\E_h^i}|e|^{-1}
\bigl\|\p((I_h-1)\delta)/\p t\bigr\|_{L^2(e)}^2\\
&\lesssim\|I_h\delta\|_h^2+\sum_{T\in\T_h}
\bigl(h_T^{-2}\|(I_h-1)\delta\|_{H^1(T)}^2
+\|(I_h-1)\delta\|_{H^2(T)}^2\bigr)
\lesssim\|\delta\|_h+\sum_{T\in\T_h}\|\delta\|_{H^2(T)}^2
\lesssim\|v+v_h\|_h .
\end{align*}
The bounds for $A$, $B$, $C$, $D$ and the preceding estimates give the first
inequality with $C_{\rm dS}=C_{\rm S}+C(\beta)h_{\max}^\beta$.

\emph{The second inequality.} Let $\varphi\in L^2(\O)$ with
$\|\varphi\|_{L^2(\O)}=1$ and let $z\in H^2_0(\O)$ solve
$a(z,\bullet)=(\varphi,\bullet)_{L^2(\O)}$. Then $z\in H^{2+\alpha}(\O)$ by
\citep[Thm.~2]{blum1980boundary}. The argument above applies with $L^\infty(\O)$
replaced by $L^2(\O)$, with $C_{\rm S}$ replaced by $C_{\rm F}$ and with
$\beta$ replaced by $\min\{\alpha,4\}$, and gives
$C_{\rm dF}=C_{\rm F}+C(\alpha)h_{\max}^\alpha$.
\end{proof}

\begin{lem}[\rm Enrichment \citep{brenner2012quadratic}]
\label{lem:enrich-sol}
\noindent Let $(u,v)$ be a solution to \eqref{wform}. Then $(i)$-$(iii)$ hold.
\begin{itemize}
\item[(i)] $|a(u,E_h(I_h-1)u)|\lesssim h^{2\alpha}$,
\item[(ii)] $|a(v,E_h(I_h-1)v)|\lesssim h^{2\alpha}$,
\item[(iii)] $|b(u,E_h(I_h-1)u,v)|\lesssim h^{2\alpha}$;
\end{itemize}
the suppressed constant in ``$\lesssim$'' is independent of the mesh size
parameter '$h$'.
\end{lem}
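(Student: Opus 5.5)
The plan is to treat (i) and (ii) as consequences of the fact that $E_h(I_h-1)\varphi$ for $\varphi\in\{u,v\}$ is a conforming $H^2_0(\O)$ function that is small in $L^2$, $H^1$ and in the broken sense. Part (iii) then follows by moving the bracket onto the regular factors.

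\emph{Step 1: rewrite the test function.} For $\varphi\in\{u,v\}\subset H^{2+\alpha}(\O)\cap H^2_0(\O)$ by \Cref{thm:reg}, write
\[
w:=E_h(I_h-1)\varphi=(E_hI_h\varphi-\varphi)+(1-E_h)\varphi .
\]
The second summand is not defined, since $E_h$ acts only on $V_h$. So instead I would work directly with $w=E_hI_h\varphi-E_h\varphi$ and replace $E_h\varphi$ by the definition used in \citep{brenner2012quadratic}. Simpler: set $w=E_hI_h\varphi-I_h\varphi+(I_h-1)\varphi$ whenever $E_h\varphi=\varphi$ is interpreted on conforming arguments; here $E_h$ is extended to $V_h+H^2_0$ with $E_h\psi=\psi$ for $\psi\in\widetilde V_h\cap H^2_0$. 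In either reading, \Cref{lem:enrich}(v) together with \Cref{lem:enrich}(i) applied to $I_h\varphi-E_hI_h\varphi$ and \Cref{lem:P2interp}(i) give
\[
\|w\|_{L^2(\O)}+h|w|_{H^1(\O)}\lesssim h^{2+\alpha},\qquad |w|_{H^2(\O)}\lesssim h^{\alpha}.
\]

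\emph{Step 2: parts (i) and (ii).} The crude bound $|a(\varphi,w)|\le\trinl\varphi\trinr\,|w|_{H^2}$ gives only $h^\alpha$, so one more power $h^\alpha$ must come from the regularity of $\varphi$. For $v$ I would use \eqref{wformb}: $a(v,w)=b(u,u,w)$. Then \Cref{lem:Bound-b}, with the $L^2$ norm of $[u,u]\in L^2$ (a regularity of $u\in H^{2+\alpha}\subset W^{2,4}$), gives $|a(v,w)|\le\tfrac12\|[u,u]\|_{L^2}\|w\|_{L^2}\lesssim h^{2+\alpha}$. For $u$ there is no equation, only the variational inequality. I would therefore localise, as in \citep{brenner2012quadratic}: $\Delta^2u-[u,v]-f=\lambda$ is a nonnegative measure supported in the coincidence set. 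In the interior, $u\in H^3_{\rm loc}$ by \Cref{thm:reg}, and the measure $\lambda$ is in $H^{-1}$ locally. I would then write
\[
a(u,w)=(f+[u,v],w)+\langle\lambda,w\rangle,
\]
and bound $\langle\lambda,w\rangle\le\|\lambda\|_{H^{-1}}|w|_{H^1}\lesssim h^{1+\alpha}\le h^{2\alpha}$. The condition $\max\chi(\p\O)<0$ keeps the support of $\lambda$ away from the boundary corners, which makes the $H^{-1}$ bound legitimate.

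\emph{Step 3: part (iii).} By the symmetry of $b$, $b(u,w,v)=b(u,v,w)=-\tfrac12([u,v],w)$. With $[u,v]\in L^2$, again because $u,v\in H^{2+\alpha}\hookrightarrow W^{2,4}$, the $L^2$ bound of Step 1 yields $O(h^{2+\alpha})$.

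The main obstacle is part (i): controlling the Lagrange multiplier $\lambda$ and justifying $\lambda\in H^{-1}$ near the free boundary. If that fails, I would fall back on $\lambda$ being a finite measure, pair it with $\|w\|_{L^\infty}\lesssim h^{1+\alpha}$ obtained from an inverse/Sobolev argument, and still obtain $h^{2\alpha}$.
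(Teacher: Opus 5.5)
Your proof is correct, but it takes a different route from the one the paper has in view. The paper gives no proof of this lemma, only the citation of \citep{brenner2012quadratic}. The estimates it records for this purpose, \Cref{lem:P2interp}(iii) and \Cref{lem:enrich}(vi)--(vii), point to a purely approximation-theoretic proof of (i)--(ii); the same integration-by-parts device appears in the proof of \Cref{thm:disc-sob}. For any $\varphi\in H^{2+\alpha}(\O)\cap H^2_0(\O)$ and $w:=(E_hI_h-1)\varphi\in H^2_0(\O)$, integrate by parts on each triangle against the piecewise constant Hessian $D^2I_h\varphi$. The boundary edges drop out because $\nabla w=0$ on $\p\O$, and one obtains
\[
a(\varphi,w)=a_{\rm pw}(\varphi-I_h\varphi,w)+\sum_{e\in\E_h^i}\int_e\sjump{D^2I_h\varphi\,n_e}\cdot\nabla w\,\ds .
\]
Both terms are $\lesssim h^{\alpha}\cdot h^{\alpha}$ by \Cref{lem:P2interp}(i),(iii) and \Cref{lem:enrich}(v)--(vii). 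This argument uses nothing about $(u,v)$ beyond $H^{2+\alpha}$ regularity, and it produces exactly the exponent $2\alpha$ of the statement.

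You instead exploit the equations. For $v$ you use \eqref{wformb}, for $u$ the multiplier representation $a(u,\bullet)=(\widetilde f,\bullet)_{L^2(\O)}+\langle\lambda,\bullet\rangle$, and for (iii) the fact $[u,v]\in L^2(\O)$, which is also the natural treatment of (iii) on the other route. This buys sharper rates: $h^{2+\alpha}$ in (ii)--(iii) and $h^{1+\alpha}$ in (i). The cost is more structure. You need a coincidence set compactly contained in $\O$, and, for $\lambda\in H^{-1}(\O)$, the interior regularity $u\in H^3_{\rm loc}(\O)$ from \Cref{thm:reg}. Your constant in (i) therefore also depends on $\|u\|_{H^3}$ near the coincidence set.

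The step you single out as the main obstacle is a one-liner. Take $\phi\in C^\infty_c(\O)$ with $\phi\equiv1$ near the coincidence set; then $\langle\lambda,w\rangle=-\int_\O\nabla\Delta u\cdot\nabla(\phi w)\dx-(\widetilde f,\phi w)_{L^2(\O)}$. Your fallback is also sound. Indeed $\lambda(\O)=\langle\lambda,\phi\rangle<\infty$, and Agmon's inequality with \Cref{lem:enrich}(v) gives $\|w\|_{L^\infty(\O)}\lesssim\|w\|_{L^2(\O)}^{1/2}\|w\|_{H^2(\O)}^{1/2}\lesssim h^{1+\alpha}$.

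Finally, your Step 1 detour is unnecessary. The only meaningful reading of $E_h(I_h-1)\varphi$ is $(E_hI_h-1)\varphi$, as in \eqref{c0ipest4}--\eqref{c0ipest5}, and \Cref{lem:enrich}(v) alone gives the bounds you need.
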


\begin{lem}\label{lem:trilinearprop}
\noindent Let $\psi,w\in H^{2+\alpha}(\O)\cap H^2_0(\O)$ and $\varphi_h\in V_h$. Then $(i)$-$(ii)$ hold.
\begin{itemize}
\item[(i)] $b_{\rm pw}(\psi,\varphi_h,w)
-b_{\rm j}(\psi,\varphi_h,w)
=\half\sit cof(D^2\psi)\nabla\varphi_h\cdot\nabla w\,\dx$, and
\item[(ii)] $b_{\rm j}(\varphi_h,\psi,w)=0$.
\end{itemize}
\end{lem}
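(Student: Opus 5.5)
Part (ii) follows directly from the regularity of $\psi$, and part (i) is an elementwise integration by parts based on the divergence-free structure of the cofactor matrix.

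\emph{Part (ii).} Since $\psi\in H^{2+\alpha}(\O)$ with $\alpha>1/2$, the gradient $\nabla\psi$ belongs to $H^{1+\alpha}(\O)\hookrightarrow C(\overline\O)$. Hence $\sjump{\nabla\psi}_e=0$ on every interior edge $e\in\E_h^i$. On a boundary edge $e\in\E_h^b$ the convention gives $\sjump{\nabla\psi}_e=\nabla\psi|_e$, and this vanishes because $\psi\in H^2_0(\O)$. Independently, $w|_e=0$ on boundary edges. Every edge integral in $b_{\rm j}(\varphi_h,\psi,w)$ therefore vanishes, whatever the value of $\smean{cof(D^2\varphi_h)}$.

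\emph{Part (i).} I would first record the pointwise identity $[\psi,\phi]=cof(D^2\psi):D^2\phi=\operatorname{div}\bigl(cof(D^2\psi)\nabla\phi\bigr)$. It holds because each row of $cof(D^2\psi)$, namely $(\psi_{yy},-\psi_{xy})$ and $(-\psi_{xy},\psi_{xx})$, is divergence free; this is equality of mixed third derivatives. I would verify it for smooth $\psi$ and extend it to $\psi\in H^{2+\alpha}(T)$ by density. That extension is legitimate because on each $T$ both sides depend continuously on $\psi$ in $H^{2+\alpha}(T)$ when tested against $w\in C^1(\overline T)$ and polynomial $\varphi_h|_T$; the traces of $D^2\psi$ lie in $L^2(\partial T)$ since $\alpha>1/2$.

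On each $T\in\T_h$ I would then integrate by parts:
\[
-\tfrac12\int_T[\psi,\varphi_h]w\,\dx
=\tfrac12\int_T cof(D^2\psi)\nabla\varphi_h\cdot\nabla w\,\dx
-\tfrac12\int_{\partial T}cof(D^2\psi)\nabla\varphi_h\cdot n_T\,w\,\ds .
\]
Summing over $T$, I would regroup the boundary integrals edge by edge. On an interior edge $e$, the traces of $cof(D^2\psi)$ from $T_\pm$ coincide because $D^2\psi\in H^\alpha(\O)$ with $\alpha>1/2$, and $w$ is continuous. With $n_{T_+}=n_e$, $n_{T_-}=-n_e$ and the symmetry of $cof(D^2\psi)$, the two contributions combine to
\[
-\tfrac12\int_e \smean{cof(D^2\psi)}\sjump{\nabla\varphi_h}\cdot n_e\,w\,\ds ,
\]
where $\smean{cof(D^2\psi)}=cof(D^2\psi)$. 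On a boundary edge, both the integrand above and the corresponding term of $b_{\rm j}$ vanish since $w=0$. The sum of all boundary terms is therefore exactly $b_{\rm j}(\psi,\varphi_h,w)$, so
\[
b_{\rm pw}(\psi,\varphi_h,w)=\tfrac12\sit cof(D^2\psi)\nabla\varphi_h\cdot\nabla w\,\dx+b_{\rm j}(\psi,\varphi_h,w),
\]
which is (i).

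The only delicate step is the sign and orientation bookkeeping that matches the $\partial T$ integrals with the jump convention $\sjump{\phi}=\phi_+-\phi_-$, together with the justification of the elementwise divergence identity and of the trace terms at the limited regularity $H^{2+\alpha}$. I would handle the latter by the density argument described above.
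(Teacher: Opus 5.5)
Your proposal is correct and follows the paper's route: (i) comes from elementwise integration by parts using the divergence-free rows of $cof(D^2\psi)$ and the single-valuedness of its trace across interior edges, and (ii) comes from $\sjump{\nabla\psi}=0$ on interior edges together with $\nabla\psi=0$ and $w=0$ on $\partial\O$. Your justification via traces of $D^2\psi\in H^\alpha(\O)$ with $\alpha>1/2$ is more accurate than the paper's claim that $cof(D^2\psi)$ is continuous, which need not hold for $\alpha\le 1$.
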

\begin{proof}
The rows of $ cof(D^2\psi)$ are divergence free and $ cof(D^2\psi)$ is continuous. Integration by parts on each triangle gives (i); see \citep[Lem.~4.2 \& Eq.~(4.3)]{brenner2017c}. Since $\psi\in H^{2+\alpha}(\O)\subset H^2(\O)$, the gradient of $\psi$ has no jump
across an interior edge and vanishes on 
$\partial\O$, and (ii) follows from the definition of
$b_{\rm j}(\bullet,\bullet,\bullet)$.
\end{proof}

\begin{lem}[\rm Bounds for $b_{\rm IP}(\bullet,\bullet,\bullet)$
{\citep[Lemma 2.6]{carstensen2019adaptive}}]\label{lem:bIPbound}
\noindent For all $\eta_{\rm u},\varphi_h,\phi_h\in V_h$,
$$b_{\rm IP}(\eta_{\rm u},\varphi_h,\phi_h)\le
C_{\rm dS}\|\eta_{\rm u}\|_h\|\varphi_h\|_h
\|\phi_h\|_h.$$
\end{lem}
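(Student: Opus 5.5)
We bound the three pieces of $b_{\rm IP}(\eta_{\rm u},\varphi_h,\phi_h)=b_{\rm pw}(\eta_{\rm u},\varphi_h,\phi_h)-b_{\rm j}(\eta_{\rm u},\varphi_h,\phi_h)-b_{\rm j}(\varphi_h,\eta_{\rm u},\phi_h)$ from \eqref{defn-bh} separately. In each of them the test function $\phi_h$ is taken out in $L^\infty(\O)$, and the remaining factor is bounded by a product of two pieces of the discrete norm \eqref{discretenorm}. \Cref{thm:disc-sob}(a) with $v=0$ then converts $\|\phi_h\|_{L^\infty(\O)}$ into $C_{\rm dS}\|\phi_h\|_h$.

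\emph{Volume term.} On every $T\in\T_h$ the bracket satisfies the pointwise bound $|[\psi,\varphi]|\le|D^2\psi|\,|D^2\varphi|$ with Frobenius norms; this is the computation behind \Cref{lem:Bound-b}. Hence
\[
|b_{\rm pw}(\eta_{\rm u},\varphi_h,\phi_h)|\le\tfrac12\,\trinl\eta_{\rm u}\trinr_{\pw}\trinl\varphi_h\trinr_{\pw}\|\phi_h\|_{L^\infty(\O)} .
\]

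\emph{Edge terms.} Since $\sjump{\nabla\varphi_h}=\sjump{\p\varphi_h/\p n}\,n_e$ and $n_e^{\top}cof(H)n_e=t_e^{\top}Ht_e$, each edge integrand of $b_{\rm j}(\eta_{\rm u},\varphi_h,\phi_h)$ reduces to $\smean{\p^2\eta_{\rm u}/\p t^2}\sjump{\p\varphi_h/\p n}\phi_h$. The Cauchy--Schwarz inequality, with the weights $|e|^{1/2}$ and $|e|^{-1/2}$ distributed between the two factors, gives
\[
|b_{\rm j}(\eta_{\rm u},\varphi_h,\phi_h)|\le\tfrac12\Bigl(\sum_{e}|e|\,\|\smean{D^2\eta_{\rm u}}\|_{L^2(e)}^2\Bigr)^{1/2}\sigma^{-1/2}\Bigl(\sum_e\tfrac{\sigma}{|e|}\|\sjump{\p\varphi_h/\p n}\|_{L^2(e)}^2\Bigr)^{1/2}\|\phi_h\|_{L^\infty(\O)} .
\]
The first factor is controlled by the second contribution to \eqref{discretenorm} and the jump factor by the third. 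The term $b_{\rm j}(\varphi_h,\eta_{\rm u},\phi_h)$ is bounded in the same way with the roles of $\eta_{\rm u}$ and $\varphi_h$ exchanged.

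\emph{Collecting the terms.} We add the three bounds and apply the discrete Cauchy--Schwarz inequality to the resulting sums of products of norm pieces. This yields a bound by $\|\eta_{\rm u}\|_h\|\varphi_h\|_h\|\phi_h\|_{L^\infty(\O)}$, after which \Cref{thm:disc-sob}(a) completes the argument.

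\emph{Main obstacle.} The difficulty is the constant, which is claimed to be exactly $C_{\rm dS}$ and not merely $\lesssim C_{\rm dS}$. The three norm pieces carry the weights $\tfrac12$, $\tfrac12$, $\tfrac12$, and the jump terms carry the additional factor $\sigma^{-1/2}$. A plain bound $\sqrt a\sqrt b+\sqrt c\sqrt d\le\|\cdot\|_h\|\cdot\|_h$ is therefore only available when $\sigma$ is not too small. The plan is to group the volume term and the two edge terms as an inner product of two vectors whose squared lengths are bounded by $\|\eta_{\rm u}\|_h^2$ and $\|\varphi_h\|_h^2$. If this grouping fails, we fall back on the cited proof \citep[Lemma 2.6]{carstensen2019adaptive}. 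Its normalisation absorbs these numerical factors, for instance through a penalty parameter $\sigma\ge1$ or through constants hidden in $C(\beta)$, and we would state that assumption explicitly.
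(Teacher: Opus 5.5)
\textbf{Verdict.} Your proof is correct. It is also more complete than what the paper offers. The paper proves nothing here and only refers to Lemma~2.6 of the adaptive Morley reference. In that setting the trilinear form is the piecewise bracket alone, with no counterpart of the edge terms $b_{\rm j}$. So falling back on the citation would not cover $b_{\rm j}$.

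\textbf{Comparison.} Your direct route has four steps:
\begin{itemize}
\item the pointwise bound $|[\psi,\varphi]|\le|D^2\psi|\,|D^2\varphi|$;
\item the reduction of the integrand of $b_{\rm j}$ to $\smean{\p^2\eta_{\rm u}/\p t^2}\sjump{\p\varphi_h/\p n}\phi_h$;
\item a weighted Cauchy--Schwarz inequality on the edges;
\item \Cref{thm:disc-sob}(a) with $v=0$.
\end{itemize}
This argument is self-contained and handles exactly the terms that are specific to the $C^0$ interior penalty form.

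\textbf{The grouping works.} Set $P_\psi:=\trinl\psi\trinr_{\pw}$, $A_\psi^2:=\sum_e|e|\,\|\smean{D^2\psi}\|_{L^2(e)}^2$ and $J_\psi^2:=\sum_e\frac{\sigma}{|e|}\|\sjump{\nabla\psi\cdot n_e}\|_{L^2(e)}^2$, so that $\|\psi\|_h^2=P_\psi^2+A_\psi^2+J_\psi^2$. Your three bounds add up to
\[
|b_{\rm IP}(\eta_{\rm u},\varphi_h,\phi_h)|\le\tfrac12\bigl(P_{\eta_{\rm u}}P_{\varphi_h}+\sigma^{-1/2}A_{\eta_{\rm u}}J_{\varphi_h}+\sigma^{-1/2}J_{\eta_{\rm u}}A_{\varphi_h}\bigr)\|\phi_h\|_{L^\infty(\O)} .
\]
Apply the Cauchy--Schwarz inequality in $\mathbb{R}^3$ to the vectors $(P_{\eta_{\rm u}},A_{\eta_{\rm u}},J_{\eta_{\rm u}})$ and $(P_{\varphi_h},\sigma^{-1/2}J_{\varphi_h},\sigma^{-1/2}A_{\varphi_h})$. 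This gives the factor $\tfrac12\max\{1,\sigma^{-1/2}\}$ in front of $\|\eta_{\rm u}\|_h\|\varphi_h\|_h\|\phi_h\|_{L^\infty(\O)}$. That factor is at most $1$ for $\sigma\ge1/4$, and equals $\tfrac12$ for $\sigma\ge1$.

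If $|\bullet|_{H^2(T)}$ counts the mixed derivative only once, the sharp pointwise bound for the bracket in that seminorm carries a factor $2$. The overall factor then becomes $\max\{1,\tfrac12\sigma^{-1/2}\}$, and the threshold $\sigma\ge1/4$ is unchanged.

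\textbf{The assumption on $\sigma$.} The lemma is only used where $\sigma$ is large enough for coercivity of $a_{\rm IP}$ ($\sigma=5$ suffices, and all experiments use $\sigma\ge5$). The hypothesis $\sigma\ge1/4$ therefore costs nothing. You should state it explicitly as part of your proof rather than leaving it as a fallback.
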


\section{The discrete problem}\label{sec:disc-prob}

\subsection{Formulation}

The discrete admissible set is
\[
K_h:=\bigl\{\varphi_h\in V_h\,:\,\chi(p)\le\varphi_h(p)\fl
p\in\cV_h\bigr\} .
\]
It is a non-empty closed convex subset of $V_h$. A function in $K_h$ may fall
below the obstacle in the interior of a triangle, so $K_h$ is not a subset of
$K$. The $C^0$ interior penalty method seeks
$(u_h,v_h)\in K_h\times V_h$ with
\begin{subequations}\label{C0IPwform}
\begin{align}
a_{\rm IP}(u_h,u_h-\varphi_1)+2b_{\rm IP}(u_h,u_h-\varphi_1,v_h)
&\le(f,u_h-\varphi_1)_{L^2(\O)}\fl\varphi_1\in K_h,
\label{C0IPwforma}\\
a_{\rm IP}(v_h,\varphi_2)-b_{\rm IP}(u_h,u_h,\varphi_2)
&=0\fl\varphi_2\in V_h.\label{C0IPwformb}
\end{align}
\end{subequations}
The form $b_{\rm IP}(\bullet,\bullet,\bullet)$ is symmetric in its first two
arguments only. The position of the test function in \eqref{C0IPwforma} and in
\eqref{C0IPwformb} is therefore part of the definition of the method.

\subsection{Existence, {\em a priori} bound and uniqueness}

Let $\sigma$ be large enough for $a_{\rm IP}(\bullet,\bullet)$ to be coercive
on $V_h$; the value $\sigma=5$ suffices \citep[Rem.~2.3]{brenner2012quadratic}. Define $G_h:V_h\to V_h$ by
\[
a_{\rm IP}(G_h(\phi_h),\psi_h)=b_{\rm IP}(\phi_h,\phi_h,\psi_h)
\fl\phi_h,\psi_h\in V_h
\]
and the discrete energy
\[
j_h(\phi_h):=\tfrac12 a_{\rm IP}(\phi_h,\phi_h)
+\tfrac12 a_{\rm IP}(G_h(\phi_h),G_h(\phi_h))
-(f,\phi_h)_{L^2(\O)} .
\]
{\bf Note:} By applying the Fr\'echet derivative and the symmetry of
$b_{\rm IP}(\bullet,\bullet,\bullet)$ in its first two arguments, one verifies
that any minimiser of the energy functional $j_h(\bullet)$ over $K_h$ is a
solution to \eqref{C0IPwform}.

\begin{thm}[Existence, {\em a priori} bound and uniqueness condition]
\label{thm:disc-existence}
\noindent Let $(f,\chi)\in L^2(\O)\times H^2(\O)$ satisfy
$\max\chi(\partial\O)<0$. Then the functional $j_h(\bullet)$ attains its
minimum over $K_h$ at some $u_h\in K_h$, and this minimiser together with
$v_h:=G_h(u_h)$ solves \eqref{C0IPwform}. Moreover, there is a
positive constant $C_{\rm d}(\chi)$ that depends only on $\chi$ such that any
solution $(u_h,v_h)$ to \eqref{C0IPwform} satisfies $(a)$-$(b)$.\\
$(a)$ $\frac{1}{2}\|u_h\|_h^2+\|v_h\|_h^2
\le N_{\rm d}^2(f,u_h):=2j_h(u_h)
+2C^2_{\rm dF}\|f\|_{L^2(\O)}^2$

$\qquad\qquad\qquad\qquad\,
\le M_{\rm d}^2(f,\chi):=C_{\rm d}(\chi)+3C^2_{\rm dF}\|f\|^2_{L^2(\O)}.$\\
$(b)$ If $\frac{C_{\rm dS}^2}{4}\|u_h\|_h^2
+C_{\rm dS}\|v_h\|_h<\half$, then $(u_h,v_h)$ is the
only solution to \eqref{C0IPwform}.
\end{thm}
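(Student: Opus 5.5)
The plan mirrors the continuous argument of \Cref{thm:contsdep}, now in the finite-dimensional space $V_h$ with the discrete norm $\|\bullet\|_h$ and the constants of \Cref{thm:disc-sob}.

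\emph{Existence.} Since $V_h$ is finite dimensional and $K_h$ is non-empty, closed and convex, a minimiser of the continuous functional $j_h$ exists once $j_h$ is coercive on $K_h$. Coercivity of $a_{\rm IP}$ (for $\sigma$ as above) gives $a_{\rm IP}(\phi_h,\phi_h)\gtrsim\|\phi_h\|_h^2$, and the second term of $j_h$ is nonnegative. The Friedrichs inequality of \Cref{thm:disc-sob}(b) together with Young's inequality gives
\[
(f,\phi_h)_{L^2(\O)}\le C_{\rm dF}\|f\|_{L^2(\O)}\|\phi_h\|_h\le\tfrac14\|\phi_h\|_h^2+C_{\rm dF}^2\|f\|_{L^2(\O)}^2 .
\]
Hence $j_h(\phi_h)\to\infty$ as $\|\phi_h\|_h\to\infty$, and a minimiser $u_h$ exists. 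The Note preceding the theorem shows that $(u_h,G_h(u_h))$ solves \eqref{C0IPwform}. Concretely, the derivative of $\tfrac12 a_{\rm IP}(G_h\phi,G_h\phi)$ in the direction $\psi$ equals $2b_{\rm IP}(\phi,\psi,G_h\phi)$, by the symmetry of $b_{\rm IP}$ in its first two arguments and the symmetry of $a_{\rm IP}$. The first-order optimality condition over the convex set $K_h$ is then exactly \eqref{C0IPwforma}.

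\emph{Bound (a).} For an arbitrary solution $(u_h,v_h)$, test \eqref{C0IPwformb} with $\varphi_2=v_h$ and compare with the definition of $G_h$ to identify $v_h=G_h(u_h)$. Then
\[
2j_h(u_h)=a_{\rm IP}(u_h,u_h)+a_{\rm IP}(v_h,v_h)-2(f,u_h)_{L^2(\O)} .
\]
Using $a_{\rm IP}(\phi_h,\phi_h)\ge\|\phi_h\|_h^2$ (assuming $\sigma$ is large enough for this normalisation) and the Young bound $2(f,u_h)\le\tfrac12\|u_h\|_h^2+2C_{\rm dF}^2\|f\|^2$ yields $\tfrac12\|u_h\|_h^2+\|v_h\|_h^2\le N_{\rm d}^2$.

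For the second inequality, test \eqref{C0IPwforma} with a fixed $\varphi_1=I_h\chi_0\in K_h$, where $\chi_0\in K$ is a fixed function built from $\chi$ (e.g.\ a cut-off of $\max\{\chi,0\}$, smoothed). Then use \eqref{C0IPwformb} tested with $u_h-\varphi_1$-type combinations, together with \Cref{lem:bIPbound}, to bound $j_h(u_h)$ by a quantity depending only on $\chi$ plus $\tfrac12C_{\rm dF}^2\|f\|^2$. The alternative route, if $u_h$ is the minimiser, is simply $j_h(u_h)\le j_h(I_h\chi_0)$, whose quadratic and quartic terms depend only on $\chi$ via \Cref{lem:bIPbound}.

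\emph{Main obstacle.} The main obstacle is that this comparison must hold for \emph{any} solution, not only the minimiser. I would handle it by proving the bound from the inequality \eqref{C0IPwforma}, as in \citep{carstensen2021morley}. The quartic term is expanded using $2b_{\rm IP}(u_h,u_h-\varphi_1,v_h)$. The cross terms are then controlled by completing squares, which gives $C_{\rm d}(\chi)$.

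\emph{Uniqueness (b).} Let $(u_h,v_h)$ and $(\tilde u_h,\tilde v_h)$ be two solutions, with $e_u:=u_h-\tilde u_h$ and $e_v:=v_h-\tilde v_h$. Test each inequality \eqref{C0IPwforma} with the other solution and add. Subtract the equations \eqref{C0IPwformb} and test with $e_v$. Using the symmetry in the first two arguments, this leads to
\[
\|e_u\|_h^2+\|e_v\|_h^2\le 2|b_{\rm IP}(e_u,e_u,v_h)|+|b_{\rm IP}(u_h,e_u,e_v)|+\dots .
\]
After rearrangement the trilinear terms are bounded by \Cref{lem:bIPbound} with $C_{\rm dS}$, giving
\[
\|e_u\|_h^2+\|e_v\|_h^2\le\bigl(C_{\rm dS}^2\|u_h\|_h^2/4+C_{\rm dS}\|v_h\|_h\bigr)\cdot 2\bigl(\|e_u\|_h^2+\|e_v\|_h^2\bigr)
\]
via Young's inequality. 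The smallness hypothesis then forces $e_u=e_v=0$. I expect the delicate bookkeeping of the non-symmetric third slot of $b_{\rm IP}$ to be the main technical point here.
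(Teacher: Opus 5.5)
Your route is the one the paper intends. Its proof establishes existence by minimising $j_h$ over $K_h$ and defers everything else to \citep[Thm.~4.6]{carstensen2021morley}. Your existence step, the first-order condition, the first inequality in (a) and part (b) are fine.

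The gap is the second inequality in (a), $N_{\rm d}(f,u_h)\le M_{\rm d}(f,\chi)$, for a solution that is \emph{not} the minimiser. Testing \eqref{C0IPwforma} with a fixed $\varphi_1\in K_h$ and using $b_{\rm IP}(u_h,u_h,v_h)=a_{\rm IP}(v_h,v_h)$ gives
\[
2j_h(u_h)\le-a_{\rm IP}(u_h,u_h)-3a_{\rm IP}(v_h,v_h)+2a_{\rm IP}(u_h,\varphi_1)+4b_{\rm IP}(u_h,\varphi_1,v_h)-2(f,\varphi_1)_{L^2(\O)} .
\]
The cross term is bilinear in the two unknown norms: \Cref{lem:bIPbound} gives only $4C_{\rm dS}\|\varphi_1\|_h\|u_h\|_h\|v_h\|_h$. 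Absorbing it into $\|u_h\|_h^2+3\|v_h\|_h^2$ by completing squares requires $C_{\rm dS}\|\varphi_1\|_h<\sqrt3/2$. However, \Cref{thm:disc-sob} gives $C_{\rm dS}\|\varphi_1\|_h\ge\|\varphi_1\|_{L^\infty(\O)}\ge\max_{p\in\cV_h}\chi(p)$ for every $\varphi_1\in K_h$. So the absorption already fails for \Cref{ex:sq1}, where $\chi=1$ at the centre vertex. A sharper estimate does not rescue it either. A function $\varphi_1$ is admissible for $\lambda\chi$ if and only if $\varphi_1/\lambda$ is admissible for $\chi$, so the coefficient of the cross term grows linearly in $\lambda$, while the quadratic form you absorb into does not. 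Your completing-squares step is therefore a hidden smallness assumption on $\chi$, which the theorem does not make.

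What your argument does prove is the bound for the minimiser. With $\varphi_1=I_h\chi_0$, the comparison $j_h(u_h)\le j_h(\varphi_1)$ together with $-2(f,\varphi_1)_{L^2(\O)}\le\|\varphi_1\|_h^2+C_{\rm dF}^2\|f\|_{L^2(\O)}^2$ gives exactly $N_{\rm d}^2\le C_{\rm d}(\chi)+3C_{\rm dF}^2\|f\|_{L^2(\O)}^2$. This suffices for \Cref{rem:disc-smallness}. If $C_{\rm dS}M_{\rm d}<1/2$, then (a) for the minimiser yields $\frac{C_{\rm dS}^2}{4}\|u_h\|_h^2+C_{\rm dS}\|v_h\|_h<\frac12$, so by (b) it is the only solution. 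You should therefore either restrict the second inequality in (a) to minimisers, or supply a genuinely new ingredient for other solutions. One candidate is an a priori bound on the discrete contact multipliers at vertices where $\chi>0$.

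For (b), the bookkeeping you anticipate works out. Add the two cross-tested inequalities to twice the difference of \eqref{C0IPwformb} tested with $e_v$. This gives $a_{\rm IP}(e_u,e_u)+2a_{\rm IP}(e_v,e_v)\le2b_{\rm IP}(u_h,e_u,e_v)-2b_{\rm IP}(e_u,e_u,v_h)$. The combination eliminates the cubic term $b_{\rm IP}(e_u,e_u,e_v)$ and uses symmetry only in the first two slots. It yields $(1-2\kappa)\|e_u\|_h^2\le0$ with $\kappa:=\frac{C_{\rm dS}^2}{4}\|u_h\|_h^2+C_{\rm dS}\|v_h\|_h$, and then $e_v=0$ from the $v$-equation.

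A smaller point: no choice of $\sigma$ gives $a_{\rm IP}(\phi_h,\phi_h)\ge\|\phi_h\|_h^2$. The parameter $\sigma$ enters $a_{\rm IP}$ and \eqref{discretenorm} identically, and the edge-average term of $\|\bullet\|_h$ has no counterpart in $a_{\rm IP}$. Testing with $I_hw$ for a fixed smooth $w$ on fine meshes shows the inequality fails. The paper uses this normalisation tacitly as well, so either treat it as a convention or carry the coercivity constant through (a) and (b).
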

\begin{proof}
The set $K_h$ is non-empty, closed and convex in the finite-dimensional space
$V_h$, and $j_h$ is continuous and coercive on $K_h$ by the lower bound in (a).
A minimiser therefore exists. The remaining assertions follow as in
\citep[Thm.~4.6]{carstensen2021morley} with $\trinl\bullet\trinr_{\pw}$ replaced
by $\|\bullet\|_h$ and with $C_{\rm S}$ and $C_{\rm F}$ replaced by
$C_{\rm dS}$ and $C_{\rm dF}$ from \Cref{thm:disc-sob}.
\end{proof}

\begin{rem}[{\em a priori} and {\em a posteriori} criteria for discrete
uniqueness]\label{rem:disc-smallness}
The {\em a priori} smallness assumption on the data
$C_{\rm dS}M_{\rm d}(f,\chi)<1/2$ implies the {\em a posteriori} smallness
assumption $C_{\rm dS}N_{\rm d}(f,u_h)<1/2$ and so the uniqueness of the
solution to \eqref{C0IPwform}. Since \Cref{thm:disc-sob} provides
$C_{\rm dS}\to C_{\rm S}$ and $C_{\rm dF}\to C_{\rm F}$ as $h_{\max}\to0$, the
continuous condition $C_{\rm S}M(f,\chi)<1/2$ of \Cref{thm:contsdep} implies
its discrete counterpart for all sufficiently small $h_{\max}$.
\end{rem}

\section{Error analysis}\label{sec:err-est}

\subsection{Two auxiliary problems}

Let $(u,v)$ solve \eqref{wform}. \Cref{thm:reg} gives
$u,v\in H^{2+\alpha}(\O)$, and the embedding
$H^{2+\alpha}(\O)\hookrightarrow W^{2,4}(\O)$ gives
$\widetilde f:=f+[u,v]\in L^2(\O)$. The first auxiliary problem seeks
$u_L\in K$ with
\begin{equation}\label{coipwformT}
a(u_L,u_L-\varphi)\le(\widetilde f,u_L-\varphi)_{L^2(\O)}
\fl\varphi\in K ,
\end{equation}
equivalently $u_L=\argmin_{\xi\in K}J_T(\xi)$ with
$J_T(\xi):=\tfrac12 a(\xi,\xi)-(\widetilde f,\xi)_{L^2(\O)}$. The solution $u$
of \eqref{wform} solves \eqref{coipwformT}, so $u_L=u$. The second auxiliary
problem replaces $K$ by
$K_A:=\{\xi\in H^2_0(\O):\xi(p)\ge\chi(p)\fl p\in\cV_h\}$ and seeks
$u_A\in K_A$ with
\begin{equation}\label{c0ipwformA}
a(u_A,u_A-\varphi)\le(\widetilde f,u_A-\varphi)_{L^2(\O)}
\fl\varphi\in K_A ,
\end{equation}
equivalently $u_A=\argmin_{\xi\in K_A}J_T(\xi)$.

\begin{lem}[\rm{Convergence rates} \citep{brenner2012finite}]
\label{lem:auxiliary-rates}
\noindent Let $\chi\in C^2(\O)$ with $\max_{x\in\partial\O}\chi(x)<0$, let
$u_L\in C^2(\O)\cap H^{2+\alpha}(\O)$ be a solution to \eqref{coipwformT} and
let $u_A$ be a solution to \eqref{c0ipwformA}. Then there exist an $h_0>0$ and
a $\widehat u_A\in K$ such that
$$\trinl u_L-u_A\trinr\lesssim h\quad\text{and}\quad
\trinl\widehat u_A-u_A\trinr\lesssim h^2$$
for any $h<h_0$.
\end{lem}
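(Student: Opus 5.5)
Following \citep{brenner2012finite}, the plan is to treat $u_L=u$ as the solution of a biharmonic obstacle problem with right-hand side $\widetilde f\in L^2(\O)$ and to exploit the interior regularity $u\in C^2(\O)$ together with $\chi\in C^2(\O)$. The estimate for $\widehat u_A$ comes first, since it is then used for $u_L-u_A$.

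\emph{Step 1: confinement of the coincidence sets.} Since $\max\chi(\partial\O)<0$ and $u=0$ on $\partial\O$, continuity yields a compact set $\cC\subset\O$ and a neighbourhood $\mathcal N$ with $\cC\subset\mathcal N\Subset\O$ such that $u>\chi+c_0$ on $\O\setminus\mathcal N$ for some $c_0>0$. Next, show that $u_A$ is bounded in $H^2_0(\O)$ uniformly in $h$, which follows because $K_A\supset K$ is nonempty and $J_T$ is coercive. Then a compactness argument gives $u_A\to u$ in $H^2_0(\O)$, hence uniformly. Consequently, for $h<h_0$, the discrete contact vertices $\{p\in\cV_h: u_A(p)=\chi(p)\}$ lie in $\mathcal N$. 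The problem \eqref{c0ipwformA} is a variational inequality with finitely many point constraints, so $\Delta^2u_A=\widetilde f-\sum_p\lambda_p\delta_p$ with multipliers $\lambda_p\ge0$ supported on contact vertices in $\mathcal N$. Testing with a smooth bump shows $\sum_p\lambda_p\lesssim1$.

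\emph{Step 2: construction of $\widehat u_A$.} Between vertices, $u_A$ may dip below $\chi$ by at most $O(h^2)$ in $L^\infty$ near $\mathcal N$. This uses the $C^2$ bound for $\chi$ and the $W^{2,\infty}$-type control on $u_A$ away from contact points. Away from the vertices $u_A$ is biharmonic plus smooth, and near each vertex it behaves like a Green's function, which is $C^{1,1-\epsilon}$ up to a logarithm; this local control is the delicate part. Pick a fixed cutoff $\omega\in C_c^\infty(\O)$ with $\omega\equiv1$ on $\mathcal N$, $0\le\omega\le1$, and set $\widehat u_A:=u_A+\kappa h^2\omega$ with $\kappa$ large. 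Then $\widehat u_A\ge\chi$ on $\mathcal N$ by the dip bound, while on $\O\setminus\mathcal N$ the uniform convergence gives $u_A\ge\chi+c_0/2$. Hence $\widehat u_A\in K$ and $\trinl\widehat u_A-u_A\trinr=\kappa h^2\trinl\omega\trinr\lesssim h^2$.

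\emph{Step 3: the energy estimate.} Because $K\subset K_A$, test \eqref{c0ipwformA} with $\varphi=u$ and \eqref{coipwformT} with $\varphi=\widehat u_A\in K$, then add the two inequalities. This gives
\[
\trinl u-u_A\trinr^2\le a(u,\widehat u_A-u_A)-(\widetilde f,\widehat u_A-u_A)_{L^2(\O)} .
\]
The right-hand side equals $\langle\Delta^2u-\widetilde f,\widehat u_A-u_A\rangle$, where $\Delta^2 u-\widetilde f$ is a measure of bounded total variation supported in $\cC$. It is therefore bounded by $\|\widehat u_A-u_A\|_{L^\infty}\lesssim h^2$. This yields $\trinl u-u_A\trinr\lesssim h$. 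Cruder bounds $\trinl u\trinr\,\trinl\widehat u_A-u_A\trinr$ give the same order.

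\emph{Main obstacle.} The hard step is the $O(h^2)$ bound in Step 2 for the violation $\chi-u_A$ between vertices. It needs a uniform local $C^{1,1}$-type bound for $u_A$ near the contact vertices, which must be derived from the Green's function structure of the point multipliers with the uniform bound on $\sum_p\lambda_p$. I would first try the interpolation argument from \citep{brenner2012finite}. There, $\chi-u_A\le0$ at all vertices and the difference has a controlled second derivative on each triangle, so the standard estimate $\|g-I_1g\|_{L^\infty(T)}\lesssim h^2|g|_{W^{2,\infty}(T)}$ applied to $g=\chi-u_A$ yields the bound.
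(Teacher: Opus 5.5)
Steps 1 and 3 are sound. The convergence $u_A\to u$ localises the contact vertices in $\mathcal{N}$, and testing with a bump gives $\sum_p\lambda_p\lesssim1$. The construction $\widehat u_A=u_A+\kappa h^2\omega$ is the right one. Once $\widehat u_A\in K$, the crude bound you mention already gives $\trinl u-u_A\trinr\lesssim h$. (The paper gives no proof of its own here; it only cites \citep{brenner2012finite}.)

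The gap is the step you flag: the one-sided bound $\chi-u_A\lesssim h^2$ on $\mathcal{N}$. The route you propose for it fails. The estimate $\|g-I_1g\|_{L^\infty(T)}\lesssim h^2|g|_{W^{2,\infty}(T)}$ needs a uniform $W^{2,\infty}$ bound on exactly the triangles that contain contact vertices, and there $u_A\notin W^{2,\infty}$. Near a vertex $p$ with $\lambda_p>0$, $u_A$ contains $\lambda_p\Phi(\cdot-p)$ with $\Phi(x)=\frac{1}{8\pi}|x|^2\ln|x|$, whose Hessian behaves like $\lambda_p\ln|x-p|$. The bound $\sum_p\lambda_p\lesssim1$ does not control a single $\lambda_p$. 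So any two-sided interpolation estimate yields only $\lambda_p h^2|\ln h|$, or $h^{2-2/q}$ from $W^{2,q}$ bounds, not $h^2$. The uniform two-sided $C^{1,1}$ bound you ask for does not exist.

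The missing idea is that a one-sided Hessian bound suffices, and the sign of the multipliers supplies it. First correct the sign in Step 1. Testing \eqref{c0ipwformA} with $\varphi=u_A\pm w$ gives $a(u_A,w)=(\widetilde f,w)_{L^2(\O)}+\sum_p\lambda_p w(p)$. Hence $\Delta^2u_A=\widetilde f+\sum_p\lambda_p\delta_p$ with $\lambda_p\ge0$ (upward point forces), not $\widetilde f-\sum_p\lambda_p\delta_p$. With your sign the argument below would point the wrong way.

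Now write $u_A=\sum_p\lambda_p\Phi(\cdot-p)+R$ with $\Delta^2R=\widetilde f$ in $\O$. Interior regularity and $\|R\|_{L^2}\lesssim\trinl u_A\trinr+\sum_p\lambda_p\lesssim1$ give $\|D^2R\|_{L^\infty(\mathcal{N}')}\lesssim1$ on a compact neighbourhood $\mathcal{N}'\Subset\O$ of $\overline{\mathcal{N}}$. The eigenvalues of $D^2\Phi$ are $(2\ln r+3)/(8\pi)$ and $(2\ln r+1)/(8\pi)$ with $r=|x|$. So $D^2\Phi\le\frac{(2\ln\mathrm{diam}(\O)+3)_+}{8\pi}I$ for $r\le\mathrm{diam}(\O)$, i.e.\ the singular part is semiconcave. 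Using also that $\chi\in C^2(\O)$ is $C^2$ on the compact set, $D^2(u_A-\chi)\le C\,I$ a.e.\ in $\mathcal{N}'$ with $C$ independent of $h$.

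Therefore $u_A-\chi-\frac C2|x|^2$ is concave on every triangle $T$ that meets $\mathcal{N}$; for small $h$ these lie inside $\mathcal{N}'$. For $x=\sum_i\beta_ip_i\in T$,
\[
(u_A-\chi)(x)\ge\sum_i\beta_i(u_A-\chi)(p_i)-\frac C2\sum_i\beta_i|p_i-x|^2\ge-\frac C2h_T^2,
\]
because $(u_A-\chi)(p_i)\ge0$ at every vertex. This is the dip bound, and $\kappa=C/2$ completes your Step 2. It uses only $\lambda_p\ge0$ and $\sum_p\lambda_p\lesssim1$, with no information on individual multipliers.
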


\subsection{The discrete energy norm estimate}

\begin{thm}[energy norm estimate]\label{thm:err}
Let $f\in L^2(\O)$ and let $\chi\in C^2(\O)$ with
$\max_{x\in\partial\O}\chi(x)<0$ satisfy $C_{\rm S}M(f,\chi)<1/2$, and let
$(u,v)$ be the unique solution of \eqref{wform}. Then there exist $h_0>0$ and
$C>0$ that are independent of the mesh size such that, for every triangulation
with $h_{\max}<h_0$ and every sufficiently large $\sigma$, the problem
\eqref{C0IPwform} has a unique solution $(u_h,v_h)$ and
\[
\|u-u_h\|_h+\|v-v_h\|_h\le C\,h^{\alpha}.
\]
\end{thm}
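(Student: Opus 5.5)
The proof splits the error with the interpolants. Set $e_u:=I_hu-u_h$ and $e_v:=I_hv-v_h$. By \Cref{lem:P2interp}(i),
\[
\|u-u_h\|_h+\|v-v_h\|_h\lesssim h^\alpha+\|e_u\|_h+\|e_v\|_h ,
\]
so it suffices to show $\|e_u\|_h+\|e_v\|_h\lesssim h^\alpha$. Existence and uniqueness of $(u_h,v_h)$ for small $h_{\max}$ follow from \Cref{thm:disc-existence} and \Cref{rem:disc-smallness}, because $C_{\rm S}M(f,\chi)<1/2$ implies $C_{\rm dS}M_{\rm d}(f,\chi)<1/2$. For $\sigma$ large, coercivity of $a_{\rm IP}$ together with \Cref{thm:disc-existence}(a) gives uniform bounds on $\|u_h\|_h$ and $\|v_h\|_h$. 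These bounds also give the discrete smallness $\frac{C_{\rm dS}^2}{4}\|u_h\|_h^2+C_{\rm dS}\|v_h\|_h<\frac12$, and this will be used to absorb the nonlinear terms.

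\emph{Step 1 (variational inequality for $u$).} The main identity is
\[
\|e_u\|_h^2\lesssim a_{\rm IP}(I_hu-u_h,e_u)=a_{\rm IP}(I_hu,e_u)-a_{\rm IP}(u_h,e_u).
\]
Since $I_hu\in K_h$ (the nodal values agree with $u\ge\chi$ at the vertices), we may take $\varphi_1=I_hu$ in \eqref{C0IPwforma}, which bounds $-a_{\rm IP}(u_h,e_u)$ by $2b_{\rm IP}(u_h,e_u,v_h)-(f,e_u)$. For the continuous side we follow \citep{brenner2012quadratic}. Write $a_{\rm IP}(I_hu,e_u)=a_{\rm IP}(I_hu-u,e_u)+a_{\rm IP}(u,e_u)$. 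The first term is $O(h^\alpha)\|e_u\|_h$ by \Cref{lem:P2interp}. For the second, integration by parts with $u\in H^{2+\alpha}$ lets us replace $e_u$ by $E_he_u$, at a consistency cost bounded by \Cref{lem:enrich}(ii),(iii) and \Cref{lem:P2interp}(iii), namely $O(h^\alpha)\|e_u\|_h$. This leaves $a(u,E_h I_hu-E_hu_h)$.

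Now bring in the auxiliary problem \eqref{coipwformT} with $\widetilde f$, whose solution is $u$. Split
\[
a(u,E_hI_hu-E_hu_h)=a(u,E_h(I_h-1)u)+a(u,u-E_hu_h).
\]
The first term is $O(h^{2\alpha})$ by \Cref{lem:enrich-sol}(i). For the second, $E_hu_h\in K_A$ by \eqref{EhKh}, but $E_hu_h\notin K$ in general. We therefore compare with $u_A$ and $\widehat u_A$: test \eqref{c0ipwformA} with $E_hu_h$, and test \eqref{coipwformT} with $\widehat u_A\in K$, using \Cref{lem:auxiliary-rates} to get
\[
a(u,u-E_hu_h)\le(\widetilde f,u-E_hu_h)+O(h^{2})+O(h)\,\trinl u-E_hu_h\trinr .
\]
The factor $\trinl u-E_hu_h\trinr$ is at most $h^\alpha+\|e_u\|_h$ by \Cref{lem:enrich}. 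Young's inequality absorbs it, leaving $O(h^{2\alpha})$.

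\emph{Step 2 (nonlinear terms).} The linear data terms combine into
\[
(\widetilde f,u-E_hu_h)-(f,e_u)=(f,(u-I_hu)+(1-E_h)u_h)+([u,v],u-E_hu_h),
\]
and by \Cref{lem:enrich}(i),(v) the $f$-part is $O(h^\alpha)\|e_u\|_h$. We then rewrite $-\frac12([u,v],\cdot)$ as $b(u,v,\cdot)$. Using \Cref{lem:trilinearprop} we convert it to $b_{\rm IP}(u,v,e_u)$ plus $O(h^\alpha)\|e_u\|_h$ (again with \Cref{lem:enrich-sol}(iii) for the interpolation part). Together with $2b_{\rm IP}(u_h,e_u,v_h)$ this gives a difference of the form
\[
2b_{\rm IP}(u,e_u,v)-2b_{\rm IP}(u_h,e_u,v_h).
\]
We expand it as
\[
2b_{\rm IP}(u-u_h,e_u,v)+2b_{\rm IP}(u_h,e_u,v-v_h).
\]
Replacing $u$ and $v$ by $I_hu$ and $I_hv$ up to $O(h^\alpha)$, \Cref{lem:bIPbound} bounds this by
\[
2C_{\rm dS}\|e_u\|_h^2\|v\|_h+2C_{\rm dS}\|u_h\|_h\|e_u\|_h\|e_v\|_h+O(h^\alpha)\|e_u\|_h .
\]

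\emph{Step 3 (the equation for $v$).} Subtracting \eqref{C0IPwformb} from the consistent version of \eqref{wformb} tested with $e_v$ gives
\[
\|e_v\|_h^2\le O(h^\alpha)\|e_v\|_h+b_{\rm IP}(u+u_h,u-u_h,e_v).
\]
This yields $\|e_v\|_h\le C_{\rm dS}\frac{\|u\|_h+\|u_h\|_h}{2}\|e_u\|_h+O(h^\alpha)$ after symmetrizing $b_{\rm IP}(u,u)-b_{\rm IP}(u_h,u_h)$. Inserting this into Step 2, the coefficient of $\|e_u\|_h^2$ becomes
\[
2C_{\rm dS}\|v\|_h+\tfrac{C_{\rm dS}^2}{2}\|u_h\|_h(\|u\|_h+\|u_h\|_h),
\]
which is strictly less than $1$ by the smallness condition for $h<h_0$. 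Absorbing it gives $\|e_u\|_h\lesssim h^\alpha$, and then $\|e_v\|_h\lesssim h^\alpha$.

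The main obstacle is Step 1. There $E_hu_h$ lies only in $K_A$ and not in $K$, and the variational inequality for the exact pair must be transferred to discrete test functions through $u_A$ and $\widehat u_A$ without losing the rate $h^\alpha$. A secondary difficulty is the bookkeeping in Step 2: the asymmetric placement of test functions in $b_{\rm IP}$ must be matched so that \Cref{lem:trilinearprop} removes the consistency errors.
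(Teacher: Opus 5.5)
Your Steps 1 and 2 follow the paper closely: you test \eqref{C0IPwforma} with $I_hu\in K_h$, pass to $E_he_u$, compare through $u_A$, $\widehat u_A$ and $E_hu_h\in K_A$, and use \Cref{lem:trilinearprop} to turn $b(u,v,E_he_u)$ into $b_{\rm IP}(u,e_u,v)$. One caveat in Step 1: the integration by parts must be done on the polynomial $I_hu$, as in the paper, because $\nabla\Delta u\notin L^2(\O)$ in general.

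\textbf{The gap is the absorption in Step~3.}
\begin{itemize}
\item \Cref{lem:bIPbound} applied to $b_{\rm IP}(u+u_h,u-u_h,e_v)$ gives $\|e_v\|_h\le C_{\rm dS}(\|u\|_h+\|u_h\|_h)\|e_u\|_h+\mathcal O(h^\alpha)$. ``Symmetrizing'' gives no factor $\tfrac12$, because $b_{\rm IP}(u,u,\cdot)-b_{\rm IP}(u_h,u_h,\cdot)=b_{\rm IP}(u+u_h,u-u_h,\cdot)$ is already the symmetric form.
\item Inserting this into $2C_{\rm dS}\|u_h\|_h\|e_u\|_h\|e_v\|_h$ gives $2C_{\rm dS}^2\|u_h\|_h(\|u\|_h+\|u_h\|_h)\|e_u\|_h^2$. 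Even your own bound would give $C_{\rm dS}^2\|u_h\|_h(\cdots)$, not $\tfrac12 C_{\rm dS}^2\|u_h\|_h(\cdots)$.
\item Even if $\|u\|_h\approx\|u_h\|_h$ and $\|v\|_h\approx\|v_h\|_h$, the coefficient to absorb is therefore about $2C_{\rm dS}\|v_h\|_h+4C_{\rm dS}^2\|u_h\|_h^2$. \Cref{thm:disc-existence} and \Cref{rem:disc-smallness} only provide $\tfrac12\|u_h\|_h^2+\|v_h\|_h^2\le M_{\rm d}^2$ with $C_{\rm dS}M_{\rm d}<\tfrac12$. The values $C_{\rm dS}^2\|u_h\|_h^2=0.4$, $C_{\rm dS}\|v_h\|_h=0.2$ satisfy this and the uniqueness condition, yet make the coefficient about $2$.
\item Your coefficient also contains $\|u\|_h$ and $\|v\|_h$, which nothing controls. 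The norm $\|\cdot\|_h$ includes the edge averages of $D^2u$, so $\|u\|_h\neq\trinl u\trinr$.
\item Since the argument compares explicit constants, the constant hidden in your $\|e_u\|_h^2\lesssim a_{\rm IP}(e_u,e_u)$ would also have to be tracked.
\end{itemize}

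\textbf{The missing idea.} Do not eliminate $e_v$. Instead, add one half of the $u$-inequality to the $v$-inequality \emph{before} estimating any trilinear term, so that the cross terms combine rather than being bounded separately. Write $\eta_u=e_u$ and $\eta_v=e_v$. The identity in Step~4 of the paper shows that
$-b(u,u-E_hu_h,v)+b_{\rm IP}(u_h,\eta_u,v_h)+b(u,u,v-E_hv_h)-b_{\rm IP}(u_h,u_h,\eta_v)$
equals $-b_{\rm IP}(\eta_u,\eta_u,v_h)+b_{\rm IP}(\eta_u,u_h,\eta_v)$, up to terms bounded by $\mathcal O(h^\alpha)(h^\alpha+\|\eta_u\|_h+\|\eta_v\|_h)$.

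This has two consequences. The product $\|\eta_u\|_h\|\eta_v\|_h$ now carries the factor $C_{\rm dS}\|u_h\|_h$, instead of $C_{\rm dS}(\|u_h\|_h+\|u+u_h\|_h)$ when the two equations are estimated separately. Also, only the discrete solutions appear.

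Young's inequality with a parameter $\gamma\in(0,1]$ then leaves $(1-g(\gamma))\|\eta_u\|_h^2+2(1-\gamma)\|\eta_v\|_h^2$ on the left, where $g(\gamma)=\frac{1}{2\gamma}C_{\rm dS}^2\|u_h\|_h^2+2C_{\rm dS}\|v_h\|_h$. The condition $g(1)<1$ is exactly the discrete uniqueness condition $\frac{C_{\rm dS}^2}{4}\|u_h\|_h^2+C_{\rm dS}\|v_h\|_h<\frac12$, which you already derived. By continuity some $\gamma_0<1$ satisfies $g(\gamma_0)<1$, and the absorption closes.
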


\begin{proof}
The proof extends \citep[Thm.~5.1]{carstensen2021morley} to the penalty setting
and has four steps. Set $\eta_u:=I_hu-u_h$ and $\eta_v:=I_hv-v_h$.

\emph{Step 1.} Coercivity of $a_{\rm IP}(\bullet,\bullet)$ and
\eqref{C0IPwform} give
\begin{align}
\|\eta_u\|_h^2&\le a_{\rm IP}(\eta_u,\eta_u)
\le a_{\rm IP}(I_hu,\eta_u)
-(f,\eta_u)_{L^2(\O)}+2b_{\rm IP}(u_h,\eta_u,v_h),
\label{c0ipeqn1}\\
\|\eta_v\|_h^2&\le a_{\rm IP}(\eta_v,\eta_v)
\le a_{\rm IP}(I_hv,\eta_v)-b_{\rm IP}(u_h,u_h,\eta_v).
\label{c0ipeqn2}
\end{align}

\emph{Step 2.} Recall $a_{\rm IP}(\bullet,\bullet)=a_{\rm pw}(\bullet,\bullet)
+a_{\rm J}(\bullet,\bullet)$. Insert $E_h\eta_u\in H^2_0(\O)$ in the first term
on the right-hand side of \eqref{c0ipeqn1},
\begin{align}\label{c0ipeqn3u}
a_{\rm IP}(I_hu,I_hu-u_h)=a_{\rm pw}(I_hu,(1-E_h)\eta_u)
+a_{\rm pw}(I_hu,E_h\eta_u)+a_{\rm J}(I_hu,\eta_u),
\end{align}
and $E_h\eta_v$ in the first term on the right-hand side of \eqref{c0ipeqn2},
\begin{align}\label{c0ipeqn3v}
a_{\rm IP}(I_hv,I_hv-v_h)=a_{\rm pw}(I_hv,(1-E_h)\eta_v)
+a_{\rm pw}(I_hv,E_h\eta_v)+a_{\rm J}(I_hv,\eta_v).
\end{align}
Combine the first and the last term on the right-hand side of
\eqref{c0ipeqn3u} and integrate by parts to obtain
\begin{align*}
&a_{\rm pw}(I_hu,(1-E_h)\eta_u)
+\sum_{e\in\E_h^i}\int_e\smean{\frac{\p^2I_hu}{\p n^2}}
\sjump{\frac{\p\eta_u}{\p n}}\,\ds\\
&\qquad=\sum_{e\in\E_h^i}\int_e\sjump{\frac{\p^2I_hu}{\p n^2}}
\smean{\frac{\p((1-E_h)\eta_u)}{\p n}}\,\ds
+\sum_{e\in\E_h^i}\int_e\sjump{\frac{\p^2I_hu}{\p n\p t}}
\smean{\frac{\p((1-E_h)\eta_u)}{\p t}}\,\ds .
\end{align*}
The Cauchy--Schwarz inequality, standard inverse estimates and the estimates
of \Cref{lem:P2interp} and \Cref{lem:enrich} give
\begin{align}
&a_{\rm pw}(I_hu,(1-E_h)\eta_u)
+\sum_{e\in\E_h^i}\int_e\smean{\frac{\p^2I_hu}{\p n^2}}
\sjump{\frac{\p\eta_u}{\p n}}\,\ds\nonumber\\
&\qquad\lesssim\biggl(\sum_{e\in\E_h^i}|e|
\bigl\|\sjump{\p^2I_hu/\p n^2}\bigr\|_{L^2(e)}^2\biggr)^{1/2}
\biggl(\sum_{e\in\E_h^i}|e|^{-1}
\bigl\|\smean{\p((1-E_h)\eta_u)/\p n}\bigr\|_{L^2(e)}^2\biggr)^{1/2}
\nonumber\\
&\qquad\quad+\biggl(\sum_{e\in\E_h^i}|e|^{-1}
\bigl\|\sjump{\p I_hu/\p n}\bigr\|_{L^2(e)}^2\biggr)^{1/2}
\biggl(\sum_{e\in\E_h^i}|e|^{-1}
\bigl\|\smean{\p((1-E_h)\eta_u)/\p t}\bigr\|_{L^2(e)}^2\biggr)^{1/2}
\nonumber\\
&\qquad\lesssim h^{\alpha}\|\eta_u\|_h ,
\end{align}
and the same argument for $v$ gives
\begin{align}
a_{\rm pw}(I_hv,(1-E_h)\eta_v)
+\sum_{e\in\E_h^i}\int_e\smean{\frac{\p^2I_hv}{\p n^2}}
\sjump{\frac{\p\eta_v}{\p n}}\,\ds\lesssim h^{\alpha}\|\eta_v\|_h .
\end{align}
For the remaining terms of $a_{\rm J}(\bullet,\bullet)$ use
$\sjump{\p u/\p n}=0$ on the interior edges, the Cauchy--Schwarz inequality
and \Cref{lem:P2interp},
\begin{align}
&\sum_{e\in\E_h^i}\int_e\smean{\frac{\p^2\eta_u}{\p n^2}}
\sjump{\frac{\p I_hu}{\p n}}\,\ds
+\sigma\sum_{e\in\E_h^i}|e|^{-1}\int_e
\sjump{\frac{\p I_hu}{\p n}}\sjump{\frac{\p\eta_u}{\p n}}\,\ds\nonumber\\
&\quad=\sum_{e\in\E_h^i}\int_e\smean{\frac{\p^2\eta_u}{\p n^2}}
\sjump{\frac{\p((I_h-1)u)}{\p n}}\,\ds
+\sigma\sum_{e\in\E_h^i}|e|^{-1}\int_e
\sjump{\frac{\p((I_h-1)u)}{\p n}}
\sjump{\frac{\p\eta_u}{\p n}}\,\ds\nonumber\\
&\quad\lesssim\biggl(\sum_{e\in\E_h}\Bigl(|e|
\bigl\|\smean{\p^2\eta_u/\p n^2}\bigr\|_{L^2(e)}^2
+|e|^{-1}\bigl\|\sjump{\p\eta_u/\p n}\bigr\|_{L^2(e)}^2
\Bigr)\biggr)^{1/2}\nonumber\\
&\qquad\qquad\times\biggl(\sum_{e\in\E_h}|e|^{-1}
\bigl\|\sjump{\p((I_h-1)u)/\p n}\bigr\|_{L^2(e)}^2\biggr)^{1/2}
\lesssim h^{\alpha}\|\eta_u\|_h ,
\end{align}
and the analogous estimate holds with $u$ and $\eta_u$ replaced by $v$ and
$\eta_v$.

\emph{Step 3.} Insert $u$, $\widehat u_A$, $u_A$ and $E_hI_hu$ in the middle term of \eqref{c0ipeqn3u}. Boundedness of $a_{\rm pw}(\bullet,\bullet)$, the weak formulations \eqref{wform} and \eqref{c0ipwformA},
\Cref{lem:enrich-sol}, \Cref{lem:enrich} and \Cref{lem:auxiliary-rates} give
\begin{align}\label{c0ipest4}
a_{\rm pw}(I_hu,E_h\eta_u)
&=a_{\rm pw}((I_h-1)u,E_h\eta_u)+a_{\rm pw}(u,(E_hI_h-1)u)
+a_{\rm pw}(u,u-\widehat u_A)\nonumber\\
&\qquad+a_{\rm pw}(u,\widehat u_A-u_A)
+a_{\rm pw}(u-u_A,u_A-E_hu_h)+a_{\rm pw}(u_A,u_A-E_hu_h)\nonumber\\
&\lesssim h^{\alpha}\|\eta_u\|_h+h^{2\alpha}+a_{\rm pw}(u,u-\widehat u_A)
+h^2+h\|\eta_u\|_h+a_{\rm pw}(u_A,u_A-E_hu_h).
\end{align}
Combine the remaining terms of \eqref{c0ipest4} with the last two terms of
\eqref{c0ipeqn1}, use the weak formulations \eqref{wform} and
\eqref{c0ipwformA} with the admissible test function $E_hu_h\in K_A$ from
\eqref{EhKh}, \Cref{lem:enrich-sol}, \Cref{lem:enrich} and
\Cref{lem:auxiliary-rates} to obtain,
\begin{align}\label{c0ipest6}
&a_{\rm pw}(u,u-\widehat u_A)+a_{\rm pw}(u_A,u_A-E_hu_h)
-(f,\eta_u)_{L^2(\O)}+2b_{\rm IP}(u_h,I_hu-u_h,v_h)\nonumber\\
&\quad\le(f+[u,v],u-\widehat u_A)_{L^2(\O)}
+(f+[u,v],u_A-E_hu_h)_{L^2(\O)}
-(f,\eta_u)_{L^2(\O)}+2b_{\rm IP}(u_h,I_hu-u_h,v_h)\nonumber\\
&\quad\le(f+[u,v],u_A-\widehat u_A)_{L^2(\O)}
+\bigl(f,(u-E_hu_h)-I_h(u-E_hu_h)\bigr)_{L^2(\O)}\nonumber\\
&\qquad\qquad+([u,v],u-E_hu_h)_{L^2(\O)}
+2b_{\rm IP}(u_h,I_hu-u_h,v_h)\nonumber\\
&\quad\lesssim h^2+h^{2+\alpha}+h^2\|\eta_u\|_h
-2b(u,u-E_hu_h,v)+2b_{\rm IP}(u_h,I_hu-u_h,v_h).
\end{align}
For the middle term of \eqref{c0ipeqn3v} insert $v$ and use boundedness of
$a_{\rm pw}(\bullet,\bullet)$, \eqref{wformb}, \Cref{lem:enrich-sol} and
\Cref{lem:enrich},
\begin{align}\label{c0ipest5}
a_{\rm pw}(I_hv,E_h\eta_v)
&=a_{\rm pw}((I_h-1)v,E_h\eta_v)+a_{\rm pw}(v,E_hI_hv-v)
+a_{\rm pw}(v,v-E_hv_h)\nonumber\\
&\lesssim h^{\alpha}\|\eta_v\|_h+h^{2\alpha}+b(u,u,v-E_hv_h).
\end{align}

\emph{Step 4.} Collect the terms with $b_{\rm IP}(\bullet,\bullet,\bullet)$
and $b(\bullet,\bullet,\bullet)$ from \eqref{c0ipeqn2}, \eqref{c0ipest6} and
\eqref{c0ipest5}. Elementary manipulations give
\begin{align*}
&-b(u,u-E_hu_h,v)+b_{\rm IP}(u_h,I_hu-u_h,v_h)
+b(u,u,v-E_hv_h)-b_{\rm IP}(u_h,u_h,I_hv-v_h)\\
&\qquad=-b_{\rm IP}(I_hu-u_h,I_hu-u_h,v_h)
+b_{\rm IP}(I_hu-u_h,u_h,I_hv-v_h)
-b_{\rm IP}((1-I_h)u,I_hu-u_h,v_h)\\
&\qquad\quad+b_{\rm IP}((1-I_h)u,u_h,I_hv-v_h)
+b_{\rm IP}(u,(1-I_h)u,v-v_h)
-b_{\rm IP}(u,u-u_h,(1-I_h)v)\\
&\qquad\quad+b_{\rm IP}(u,(1-E_h)(I_hu-u_h),v)
+b(u,u,(1-E_h)(v_h-I_hv))
-b(u,(1-E_hI_h)u,v)\\
&\qquad\quad+b(u,u,(1-E_hI_h)v).
\end{align*}
\Cref{lem:trilinearprop}, \Cref{lem:bIPbound}, boundedness of
$b(\bullet,\bullet,\bullet)$, \citep[Lem.~2.2]{brenner2017c} and a triangle
inequality lead to
\begin{align}\label{c0ipestbs}
&-b(u,u-E_hu_h,v)+b_{\rm IP}(u_h,I_hu-u_h,v_h)
+b(u,u,v-E_hv_h)-b_{\rm IP}(u_h,u_h,I_hv-v_h)\nonumber\\
&\quad\le C_{\rm dS}\|v_h\|_h\|I_hu-u_h\|_h^2
+C_{\rm dS}\|u_h\|_h\|I_hu-u_h\|_h\|I_hv-v_h\|_h
+C_{\rm dS}\|v_h\|_h\|(1-I_h)u\|_h\|u-u_h\|_h\nonumber\\
&\qquad+C_{\rm dS}(\|u_h\|_h+\trinl u\trinr)\|(1-I_h)u\|_h\|I_hv-v_h\|_h
\nonumber\\
&\qquad+C_{\rm dS}\trinl u\trinr
\bigl(2\|(1-I_h)u\|_h+\|I_hu-u_h\|_h\bigr)\|(1-I_h)v\|_h\nonumber\\
&\qquad+\|D^2u\|_{L^4(\O)}\|\nabla v\|_{L^4(\O)}
\|\nabla(1-E_h)(I_hu-u_h)\|
+\trinl u\trinr^2\|(1-E_h)(v_h-I_hv)\|_{L^\infty(\O)}\nonumber\\
&\qquad+\|D^2u\|_{L^4(\O)}\|\nabla v\|_{L^4(\O)}\|\nabla((1-E_hI_h)u)\|
+\|D^2u\|_{L^4(\O)}\|\nabla u\|_{L^4(\O)}\|\nabla((1-E_hI_h)v)\| .
\end{align}
Combine \eqref{c0ipeqn1}--\eqref{c0ipestbs}, apply a triangle inequality and
the estimates of \Cref{lem:P2interp} and \Cref{lem:enrich} to obtain, for every
$\gamma\in(0,1]$,
\begin{align}\label{DiscEnergyEst}
\Bigl(1-\tfrac{1}{2\gamma}C_{\rm dS}^2\|u_h\|_h^2
-2C_{\rm dS}\|v_h\|_h\Bigr)&\|I_hu-u_h\|_h^2
+2(1-\gamma)\|I_hv-v_h\|_h^2\nonumber\\
&\lesssim\mathcal{O}(h^{2\alpha})
+\mathcal{O}(h^{\alpha})\bigl(\|I_hu-u_h\|_h+\|I_hv-v_h\|_h\bigr).
\end{align}
Set $g(\gamma):=\tfrac{1}{2\gamma}C_{\rm dS}^2\|u_h\|_h^2
+2C_{\rm dS}\|v_h\|_h$. \Cref{thm:disc-existence} gives $g(1)<\tfrac12$, and
$g$ is continuous, so there is a $\gamma_0$ with $0<\gamma_0<1$ and
$g(\gamma_0)<\tfrac12$. This and a sufficiently large penalty parameter
$\sigma$ in $a_{\rm J}(\bullet,\bullet)$ bound the left-hand side of
\eqref{DiscEnergyEst} from below,
\begin{align}\label{CoercivitySmallBilinear}
&\|I_hu-u_h\|_h^2+\|I_hv-v_h\|_h^2\nonumber\\
&\qquad\lesssim(1-g(\gamma_0))\,a_{\rm pw}(I_hu-u_h,I_hu-u_h)
+a_{\rm J}(I_hu-u_h,I_hu-u_h)\nonumber\\
&\qquad\quad+2(1-\gamma_0)\,a_{\rm pw}(I_hv-v_h,I_hv-v_h)
+a_{\rm J}(I_hv-v_h,I_hv-v_h).
\end{align}
Combine \eqref{DiscEnergyEst} and \eqref{CoercivitySmallBilinear}, let $C$ be
the constant suppressed in $\lesssim$, which depends on $\trinl u\trinr$,
$\trinl v\trinr$, $\|u\|_{2+\alpha}$, $\|v\|_{2+\alpha}$ and
$\|f\|_{L^2(\O)}$, and apply the Young inequality to obtain
\begin{equation}\label{c0ipestfinal}
\|I_hu-u_h\|_h^2+\|I_hv-v_h\|_h^2\le 2C(1+C)h^{2\alpha}.
\end{equation}
The triangle inequality and \Cref{lem:P2interp} complete the proof.
\end{proof}

\begin{rem}[the three error contributions]\label{rem:rate}
Three quantities enter \eqref{c0ipestfinal}. The interpolation error of $P_2$
elements in $\|\bullet\|_h$ is $\mathcal{O}(h^\alpha)$ by \Cref{lem:P2interp}.
The replacement of the constraint in $K$ by the constraint at the vertices is
$\mathcal{O}(h)$ by \Cref{lem:auxiliary-rates}. The nonconformity of $V_h$ in
$H^2_0(\O)$ is $\mathcal{O}(h^\alpha)$ by \Cref{lem:enrich}. On a convex
polygon $\alpha=1$ and the three contributions have the same order. On a domain
with a reentrant corner the first and the third have order $\alpha<1$ and
dominate.
\end{rem}

\section{Numerical experiments}\label{sec:numerics}

\noindent This section reports the numerical realisation of \eqref{C0IPwform} and four
experiments. \Cref{ssec:solver} describes the algebraic form of the discrete
problem and the iterative solver. \Cref{ssec:errors} defines the error
quantities. \Cref{ssec:square} and \Cref{ssec:lshape} present the convergence
histories on a square and on an L-shaped domain, \Cref{ssec:sigma} the
dependence of the observed rates on the penalty parameter $\sigma$, and
\Cref{ssec:smallness} the failure of the smallness condition of
\Cref{rem:disc-smallness} for large data.

\subsection{Implementation}\label{ssec:solver}\label{ssec:algebraic}

Let $\varphi_1,\dots,\varphi_N$ be the Lagrange basis of $V_h$ associated with
the interior vertices and the midpoints of the interior edges, and let
$u_h=\sum_j\alpha_j\varphi_j$ and $v_h=\sum_j\beta_j\varphi_j$. Set
\[
 A_{kj}:=a_{\rm IP}(\varphi_j,\varphi_k),\qquad
 F_k:=(f,\varphi_k)_{L^2(\O)},\qquad
 T_{ijk}:=b_{\rm IP}(\varphi_i,\varphi_j,\varphi_k).
\]
The definition \eqref{defn-bh} and the reduction of the edge terms in
\Cref{sec:disc-setup} give
\begin{align}\label{tensorT}
 T_{ijk}=-\frac12\sit[\varphi_i,\varphi_j]\varphi_k\,\dx
 +\frac12\sum_{e\in\E_h^i}\int_e\Bigl(
 \smean{\frac{\p^2\varphi_i}{\p t^2}}\sjump{\frac{\p\varphi_j}{\p n}}
 +\smean{\frac{\p^2\varphi_j}{\p t^2}}\sjump{\frac{\p\varphi_i}{\p n}}
 \Bigr)\varphi_k\,\ds ,
\end{align}
where the boundary edges contribute nothing because $\varphi_k$ vanishes on
$\partial\O$, and $T_{ijk}=T_{jik}$. With the obstacle values
$\chi_p:=\chi(p)$ at the interior vertices, \eqref{C0IPwform} is the
complementarity system
\begin{subequations}\label{algsystem}
\begin{align}
 (A\bdalpha)_k+2\sum_{i,j}T_{ikj}\,\alpha_i\beta_j-F_k&=\lambda_k,
 \label{algsystema}\\
 (A\bdbeta)_k-\sum_{i,j}T_{ijk}\,\alpha_i\alpha_j&=0,
 \label{algsystemb}\\
 \alpha_p\ge\chi_p,\quad\lambda_p\ge0,\quad
 \lambda_p(\alpha_p-\chi_p)&=0\fl p\in\cV_h(\O),
 \label{algsystemc}
\end{align}
\end{subequations}
with $\lambda_p=0$ at the edge midpoints, since the constraint in $K_h$ acts at
the vertices. The two equations contract $T$ over different pairs of indices,
because $b_{\rm IP}$ is symmetric in its first two arguments only.

The system \eqref{algsystem} is solved by the primal--dual active set strategy
of \citep{hintermuller2002primaldual} with a Newton iteration in the inner loop,
as in \citep[Sec.~6.1]{carstensen2021morley}; the linear case, in which the
inner loop is absent, is treated in \citep{brenner2012quadratic}. In the $m$-th
outer step the active set
\[
 \cA^m:=\{p\in\cV_h(\O)\,:\,\lambda^{m-1}_p+c\,(\chi_p-\alpha^{m-1}_p)>0\},
 \qquad c=1,
\]
fixes $\alpha_p=\chi_p$ on $\cA^m$ and $\lambda_p=0$ elsewhere, and the
resulting smooth system is solved by Newton's method. The discrete biharmonic
obstacle problem, obtained by deleting the terms with $T$ in
\eqref{algsystem}, provides the initial iterate together with $\bdbeta=0$. The
outer loop stops when the active set repeats, which is the criterion recorded
in \Cref{ssec:smallness}. The computations use at most ten Newton steps per
active set with tolerance $10^{-7}$ on the increment and at most forty outer
iterations.

\subsection{Error quantities}\label{ssec:errors}

Each triangulation after the first is obtained by red refinement, so
$h_\ell=h_1 2^{1-\ell}$ with $h_1$ stated for each example. The exact solution
of \eqref{wform} is not known for these obstacles, so the discrete solution on
the finest level $L$ replaces it, as in \citep{carstensen2021morley}. Let
$u_\ell$ and $v_\ell$ be the discrete solutions on the $\ell$-th triangulation
and set
\begin{align}\label{errdefs}
 e_\ell(u):=\|u_L-u_\ell\|_h,
 \qquad
 \widetilde e_\ell(u):=\max_{p\in\cV_\ell}|u_L(p)-u_\ell(p)| ,
\end{align}
and likewise for $v$. The $P_2$ spaces are nested under red refinement, so
$u_\ell$ is represented exactly on $\T_L$. The empirical order of convergence
is
\begin{align}\label{eocdef}
 {\rm EOC}(\ell):=
 \frac{\log\bigl(e_\ell(u)/e_{L-1}(u)\bigr)}{\log(2^{\,L-1-\ell})},
 \qquad \ell=1,\dots,L-2,
\end{align}
with the same expression for $\widetilde e_\ell$, and the discrete coincidence
set on level $\ell$ is
$\cC_\ell:=\{p\in\cV_\ell\,:\,u_\ell(p)-\chi(p)\le\widetilde e_\ell(u)\}$.
\subsection{The obstacle problem on the square}\label{ssec:square}

Let $\O=(-\tfrac12,\tfrac12)^2$ with initial triangulation the criss-cross mesh
through the four corners and the centre, so $h_1=1$, and let $f\equiv0$. The two
obstacles are those of \citep{brenner2013morley} for the biharmonic plate, taken
up for the Morley discretisation of the von K\'arm\'an obstacle problem in \citep{carstensen2021morley}. All tables in this subsection and in \Cref{ssec:lshape} use $\sigma=10$; \Cref{ssec:sigma} explains the choice.

\begin{example}[contact set of positive measure]\label{ex:sq1}
Let $\chi(x)=1-5|x|^2+|x|^4$, for which $\Delta^2\chi=64>0$. The complement
$\O\setminus\cC$ of the coincidence set is then connected by the analysis of
\citep{caffarelli1979obstacle}, and the computed sets in
\Cref{fig:ex1} are solid regions whose complements are connected.
\end{example}

\begin{table}[htbp]
\centering
\footnotesize
\setlength{\tabcolsep}{3.5pt}
\caption{Convergence history for \Cref{ex:sq1} at $\sigma=10$. The seven levels carry $5$, $25$, $113$, $481$, $1985$, $8065$ and $32513$ degrees of freedom; the finest serves as reference.}
\label{tab:ex1}
\begin{tabular}{||c|c||r|c||r|c||r|c||r|c||}
\hline\hline
$\ell$ & $h_\ell$ & $\widetilde e_\ell(u)$ & EOC & $\widetilde e_\ell(v)$ & EOC & $e_\ell(u)$ & EOC & $e_\ell(v)$ & EOC \\
\hline\hline
1 & 1.0000 & 0.000000 & -- & 0.108142 & 1.8397 & 162.058417 & 1.2328 & 5.206544 & 1.0946 \\
2 & 0.5000 & 0.013239 & 1.1021 & 0.040385 & 1.9443 & 80.609565 & 1.2892 & 3.515582 & 1.2266 \\
3 & 0.2500 & 0.042209 & 2.0270 & 0.008111 & 1.8204 & 38.115105 & 1.3587 & 1.758439 & 1.3023 \\
4 & 0.1250 & 0.008553 & 1.8890 & 0.001549 & 1.5363 & 16.794320 & 1.4469 & 0.766334 & 1.3543 \\
5 & 0.0625 & 0.001318 & 1.0797 & 0.000915 & 2.3133 & 6.117536 & 1.4368 & 0.310426 & 1.4048 \\
6 & 0.0312 & 0.000624 & -- & 0.000184 & -- & 2.259801 & -- & 0.117236 & -- \\
\hline\hline
\end{tabular}
\end{table}

\begin{figure}[htbp]
\centering
\subfloat[$u_h$ and $\chi$, level $7$]{%
    \includegraphics[width=0.60\textwidth]{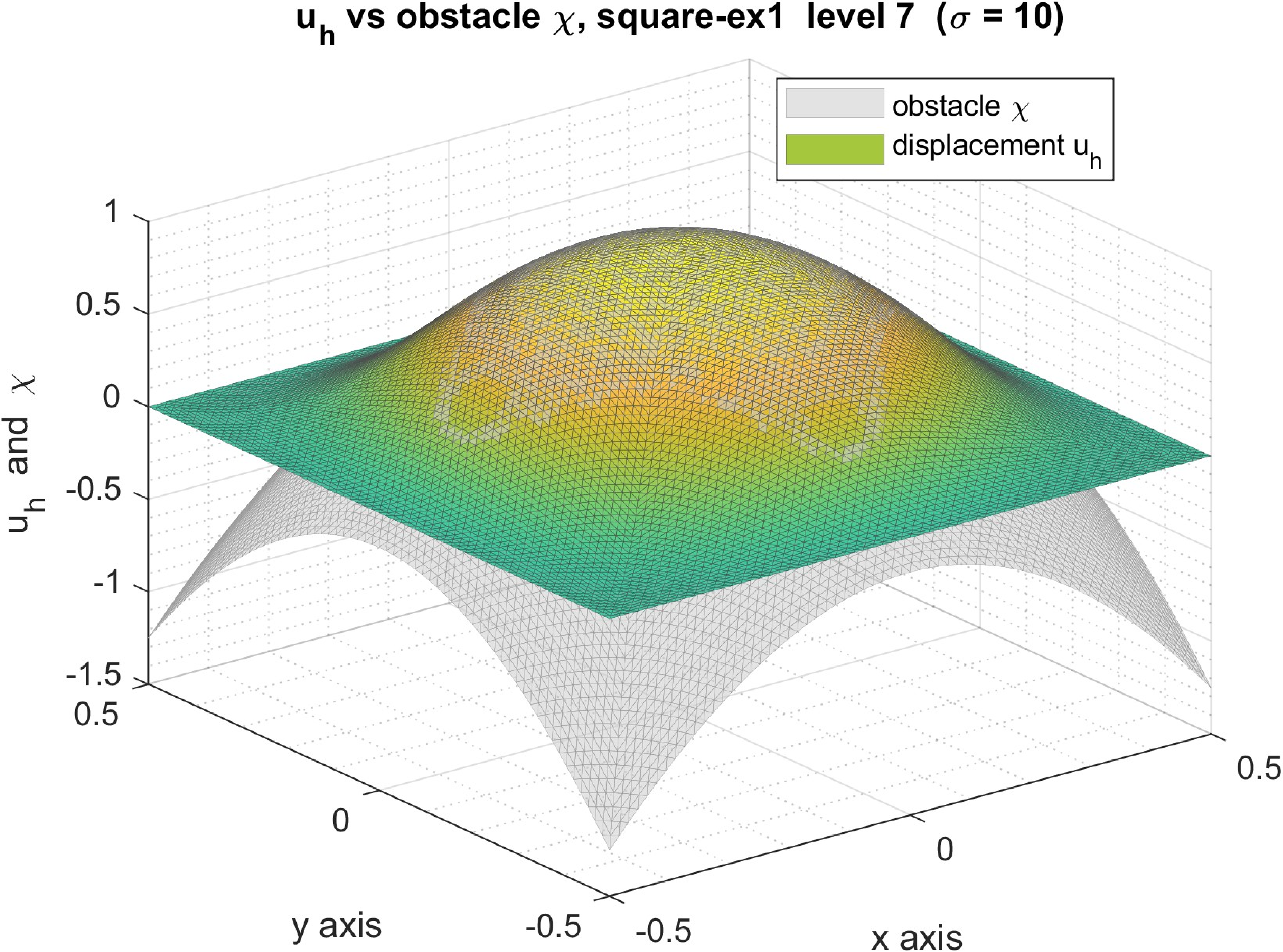}}
\vspace{0.4cm}
\subfloat[$\cC_6$]{%
    \includegraphics[width=0.42\textwidth]{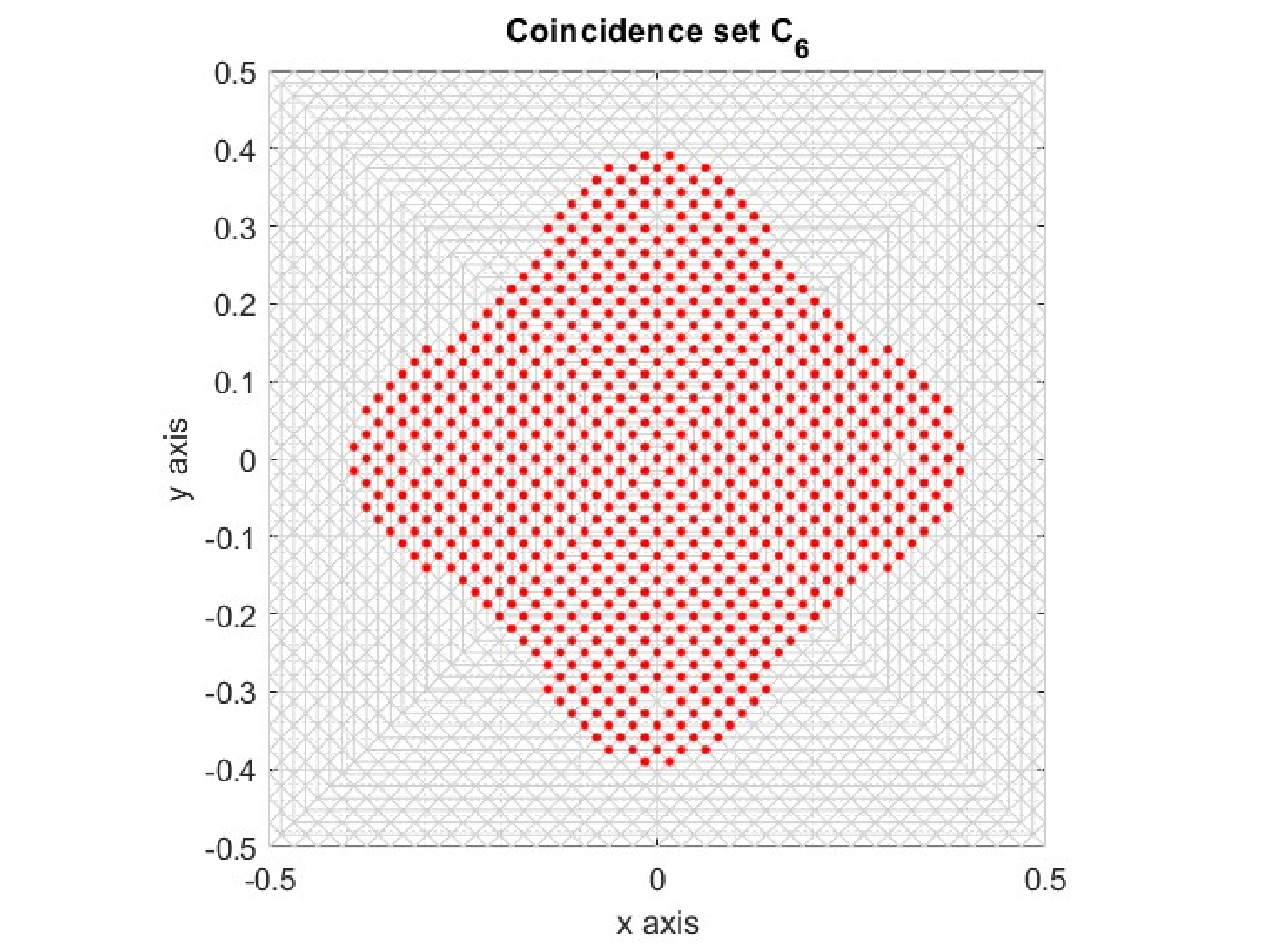}}
\hfill
\subfloat[$\cC_7$]{%
    \includegraphics[width=0.42\textwidth]{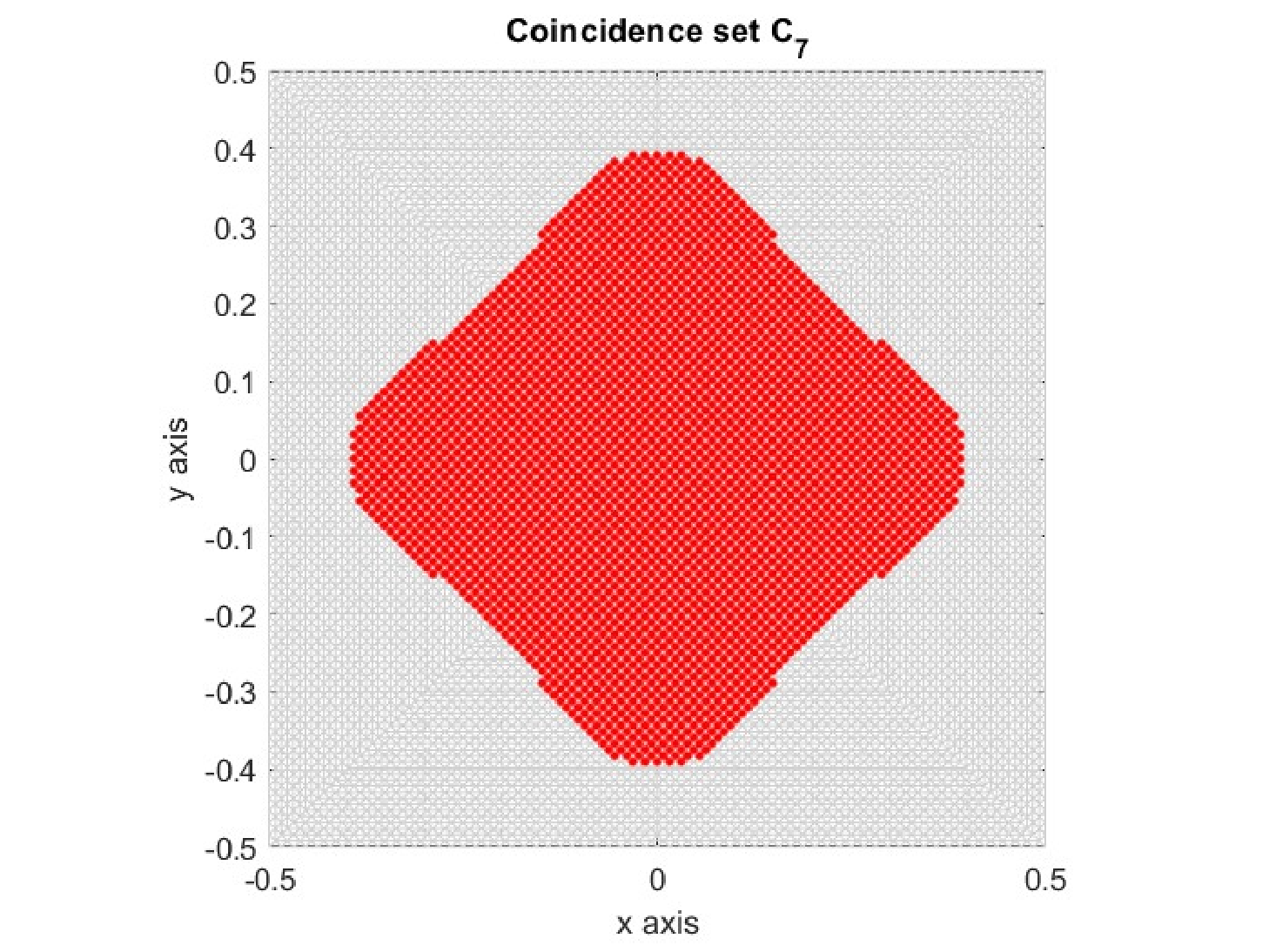}}
\caption{\Cref{ex:sq1} at $\sigma=10$: the discrete displacement against the
obstacle on the finest level, and the discrete coincidence sets on the two
finest levels.}
\label{fig:ex1}

{\footnotesize Alt text: Three panels for the square with obstacle
$\chi=1-5|x|^2+|x|^4$. The upper panel is a surface plot of the discrete
displacement lying above the obstacle surface. The lower two panels are
scatter plots of the coincidence set at levels six and seven; both are filled
diamond-shaped regions centred at the origin, the finer level denser.}
\end{figure}

\Cref{tab:ex1} shows orders between $1.23$ and $1.45$ for the displacement and between $1.09$ and $1.40$ for the Airy stress function in the discrete energy norm, both above the order $\alpha=1$ of \Cref{thm:err}. The orders in the
maximum norm lie between $1.54$ and $2.31$ for $v$ and between $1.08$ and $2.03$ for $u$; \citep[Thm.~4.1]{brenner2012quadratic} proves the order $\alpha$
in $L^\infty(\O)$ for the biharmonic obstacle problem and observes orders near $2$. The value $\widetilde e_1(u)=0$ is exact, because the only interior vertex of $\T_1$ is the centre of the criss-cross mesh and it lies in the coincidence
set on every level.

\begin{example}[contact set of vanishing measure]\label{ex:sq2}
Reversing the sign of the quartic term, $\chi(x)=1-5|x|^2-|x|^4$, gives
$\Delta^2\chi=-64<0$. Interior points are then excluded from the coincidence
set by \citep{caffarelli1979obstacle}, so its interior is empty, and the sets
computed in \Cref{fig:ex2} collapse onto a closed curve.
\end{example}

\begin{table}[htbp]
\centering
\footnotesize
\setlength{\tabcolsep}{3.5pt}
\caption{Convergence history for \Cref{ex:sq2} at $\sigma=10$, on the same sequence of triangulations as \Cref{tab:ex1}.}
\label{tab:ex2}
\begin{tabular}{||c|c||r|c||r|c||r|c||r|c||}
\hline\hline
$\ell$ & $h_\ell$ & $\widetilde e_\ell(u)$ & EOC & $\widetilde e_\ell(v)$ & EOC & $e_\ell(u)$ & EOC & $e_\ell(v)$ & EOC \\
\hline\hline
1 & 1.0000 & 0.003901 & 0.7114 & 0.111758 & 1.7454 & 161.519430 & 1.2821 & 5.263087 & 1.0853 \\
2 & 0.5000 & 0.028739 & 1.6095 & 0.045039 & 1.8540 & 78.731664 & 1.3435 & 3.534316 & 1.2131 \\
3 & 0.2500 & 0.051197 & 2.4236 & 0.008259 & 1.6563 & 34.660280 & 1.3967 & 1.850405 & 1.3062 \\
4 & 0.1250 & 0.005328 & 2.0032 & 0.004711 & 2.0794 & 13.380092 & 1.4085 & 0.790931 & 1.3462 \\
5 & 0.0625 & 0.001212 & 1.8704 & 0.001269 & 2.2664 & 5.095469 & 1.4242 & 0.323883 & 1.4043 \\
6 & 0.0312 & 0.000332 & -- & 0.000264 & -- & 1.898767 & -- & 0.122360 & -- \\
\hline\hline
\end{tabular}
\end{table}

\begin{figure}[htbp]
\centering
\subfloat[$u_h$ and $\chi$, level $7$]{%
    \includegraphics[width=0.60\textwidth]{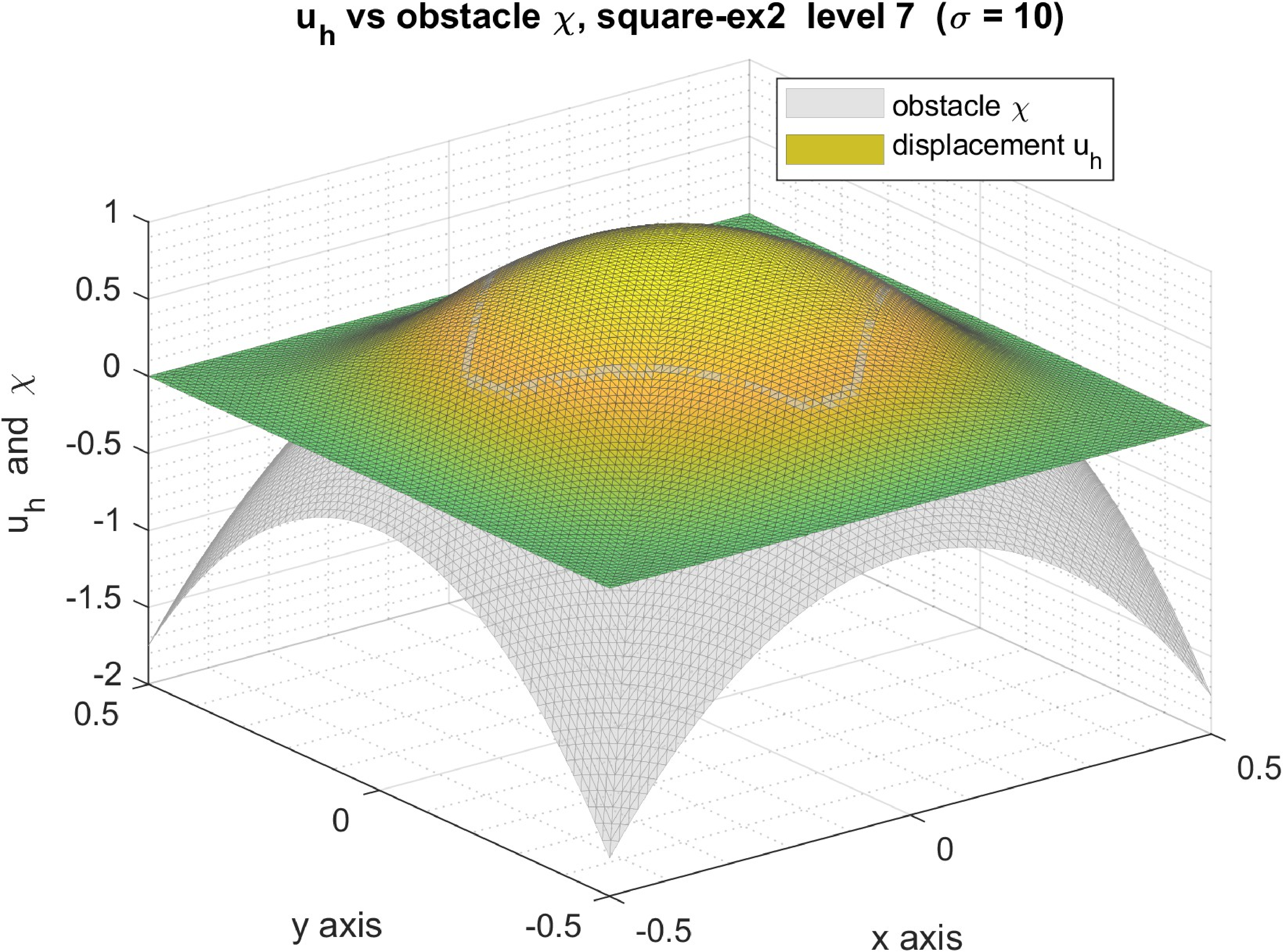}}
\vspace{0.4cm}
\subfloat[$\cC_6$]{%
    \includegraphics[width=0.42\textwidth]{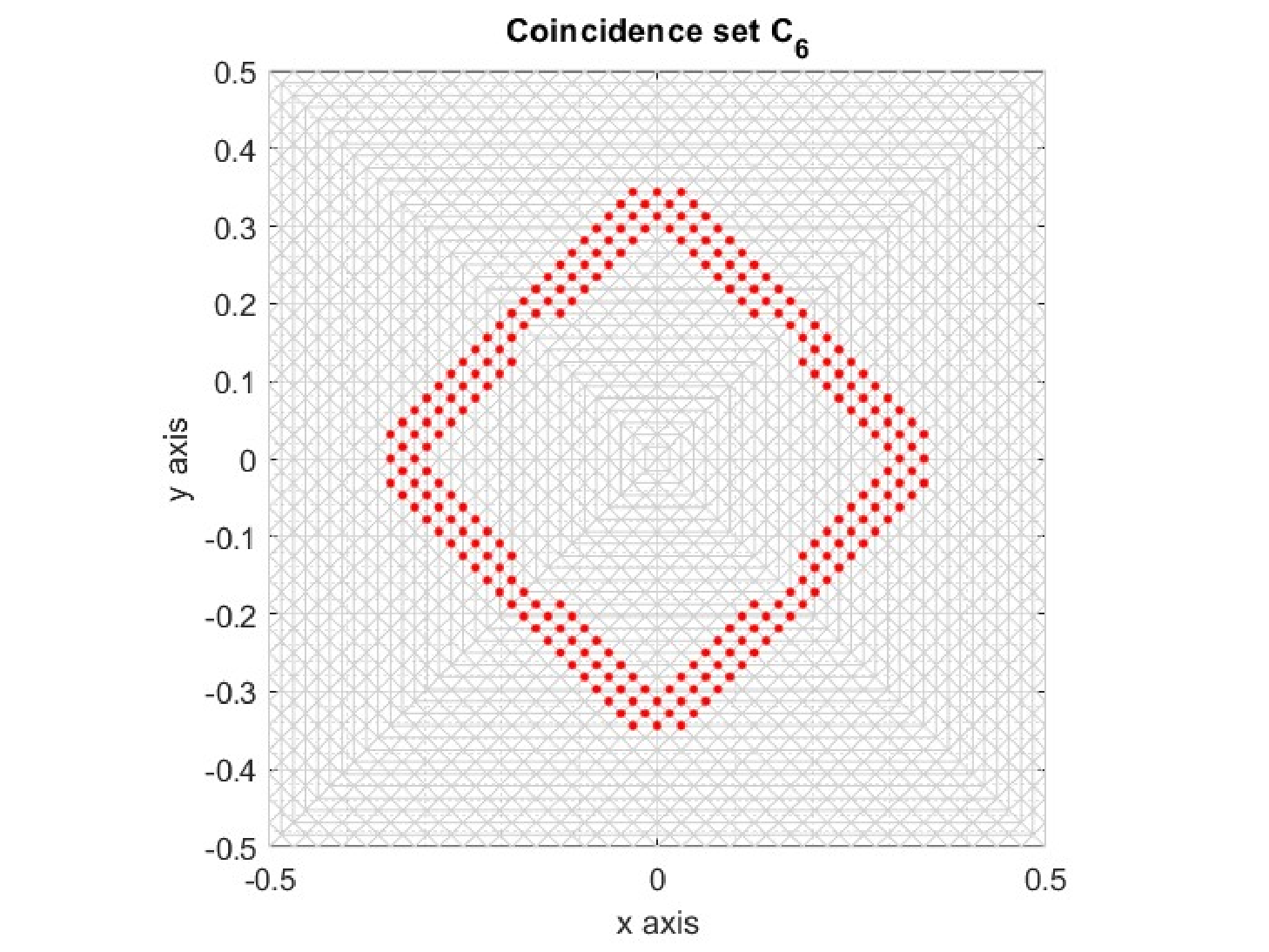}}
\hfill
\subfloat[$\cC_7$]{%
    \includegraphics[width=0.42\textwidth]{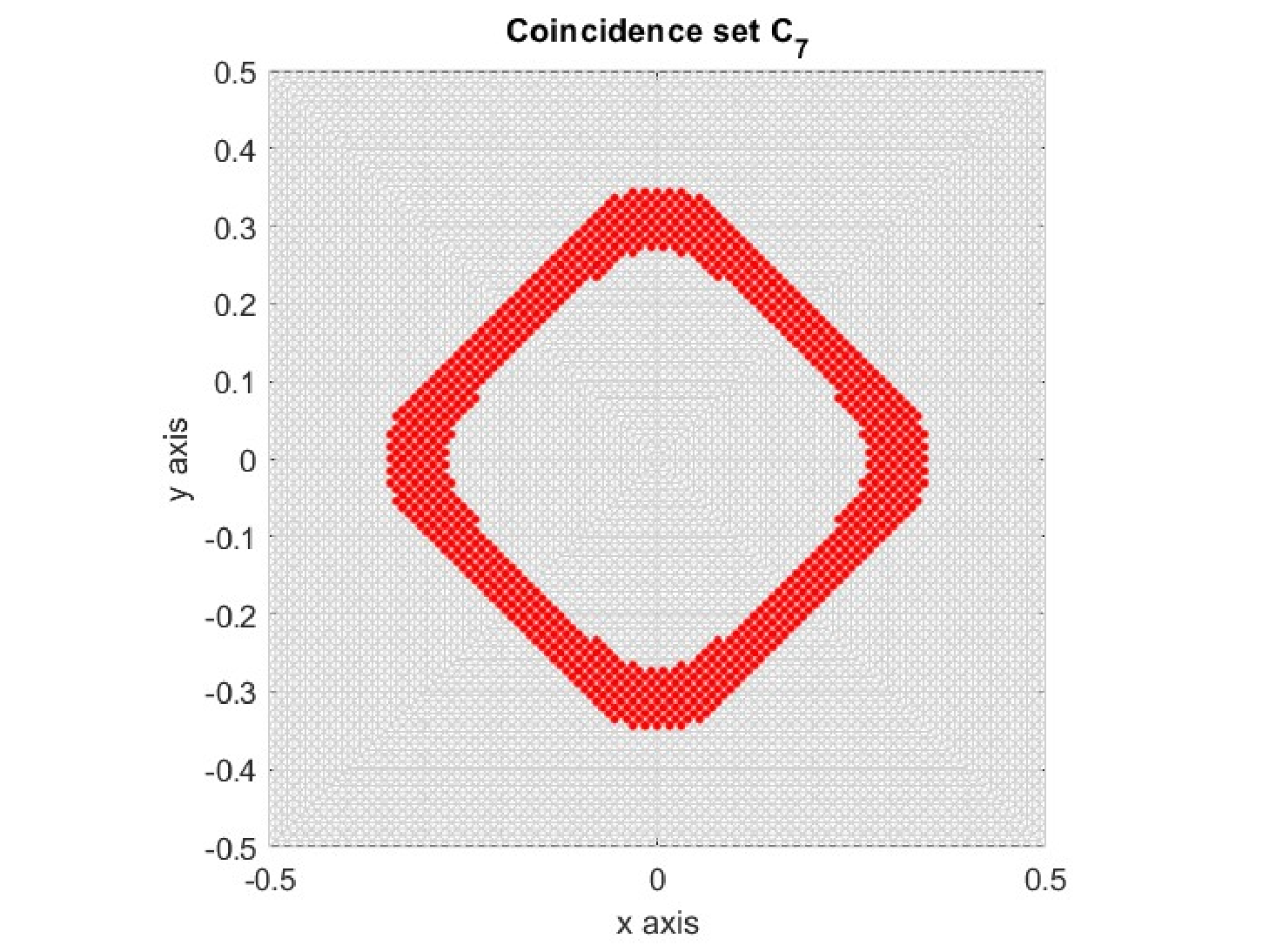}}
\caption{\Cref{ex:sq2} at $\sigma=10$: the discrete displacement against the
obstacle on the finest level, and the discrete coincidence sets on the two
finest levels, which degenerate to a curve.}
\label{fig:ex2}

{\footnotesize Alt text: Three panels for the square with obstacle
$\chi=1-5|x|^2-|x|^4$. The upper panel is a surface plot of the discrete
displacement lying above the obstacle surface. The lower two panels are
scatter plots of the coincidence set at levels six and seven; both form a thin
closed ring rather than a filled region.}
\end{figure}

\Cref{tab:ex2} shows orders between $1.28$ and $1.42$ for the displacement and
between $1.09$ and $1.40$ for the stress function, so the degeneracy of the
coincidence set leaves the rates in the discrete energy norm unchanged. The
nodal errors are less regular: $\widetilde e_\ell(u)$ increases over the three
coarsest levels in both examples, and the least squares order over all levels
is $1.38$ for \Cref{ex:sq1} against $0.99$ for \Cref{ex:sq2}.

\subsection{The obstacle problem on the L-shaped domain}\label{ssec:lshape}

Let $\O=(-\tfrac12,\tfrac12)^2\setminus[0,\tfrac12]^2$ with $f\equiv0$ and
\begin{align}\label{lshapeobstacle}
 \chi(x,y)=1-\frac{(x+0.25)^2}{0.2^2}-\frac{y^2}{0.35^2},
\end{align}
as in \citep{brenner2012quadratic,brenner2013morley,carstensen2021morley}. The
initial triangulation consists of six right-angled triangles with $h_1=0.7071$. The reentrant corner lowers the
elliptic regularity index to $\alpha=0.5445$, so \Cref{thm:err} guarantees only
$\mathcal{O}(h^{0.5445})$.

\begin{table}[htbp]
\centering
\footnotesize
\setlength{\tabcolsep}{3.5pt}
\caption{Convergence history on the L-shaped domain at $\sigma=10$. The six levels carry $5$, $33$, $161$, $705$, $2945$ and $12033$ degrees of freedom.}
\label{tab:lshape}
\begin{tabular}{||c|c||r|c||r|c||r|c||r|c||}
\hline\hline
$\ell$ & $h_\ell$ & $\widetilde e_\ell(u)$ & EOC & $\widetilde e_\ell(v)$ & EOC & $e_\ell(u)$ & EOC & $e_\ell(v)$ & EOC \\
\hline\hline
1 & 0.7071 & 0.000000 & -- & 0.000000 & -- & 60.498653 & 0.7057 & 3.328834 & 0.7147 \\
2 & 0.3536 & 0.151159 & 1.7950 & 0.068928 & 2.0744 & 124.686538 & 1.2887 & 3.931121 & 1.0329 \\
3 & 0.1768 & 0.038592 & 1.7077 & 0.016832 & 2.0947 & 56.822099 & 1.3662 & 2.311176 & 1.1661 \\
4 & 0.0884 & 0.012566 & 1.7966 & 0.004924 & 2.4161 & 22.030216 & 1.3654 & 1.150271 & 1.3256 \\
5 & 0.0442 & 0.003617 & -- & 0.000923 & -- & 8.550579 & -- & 0.458944 & -- \\
\hline\hline
\end{tabular}
\end{table}

\begin{figure}[htbp]
\centering
\subfloat[$u_h$ and $\chi$, level $6$]{%
    \includegraphics[width=0.60\textwidth]{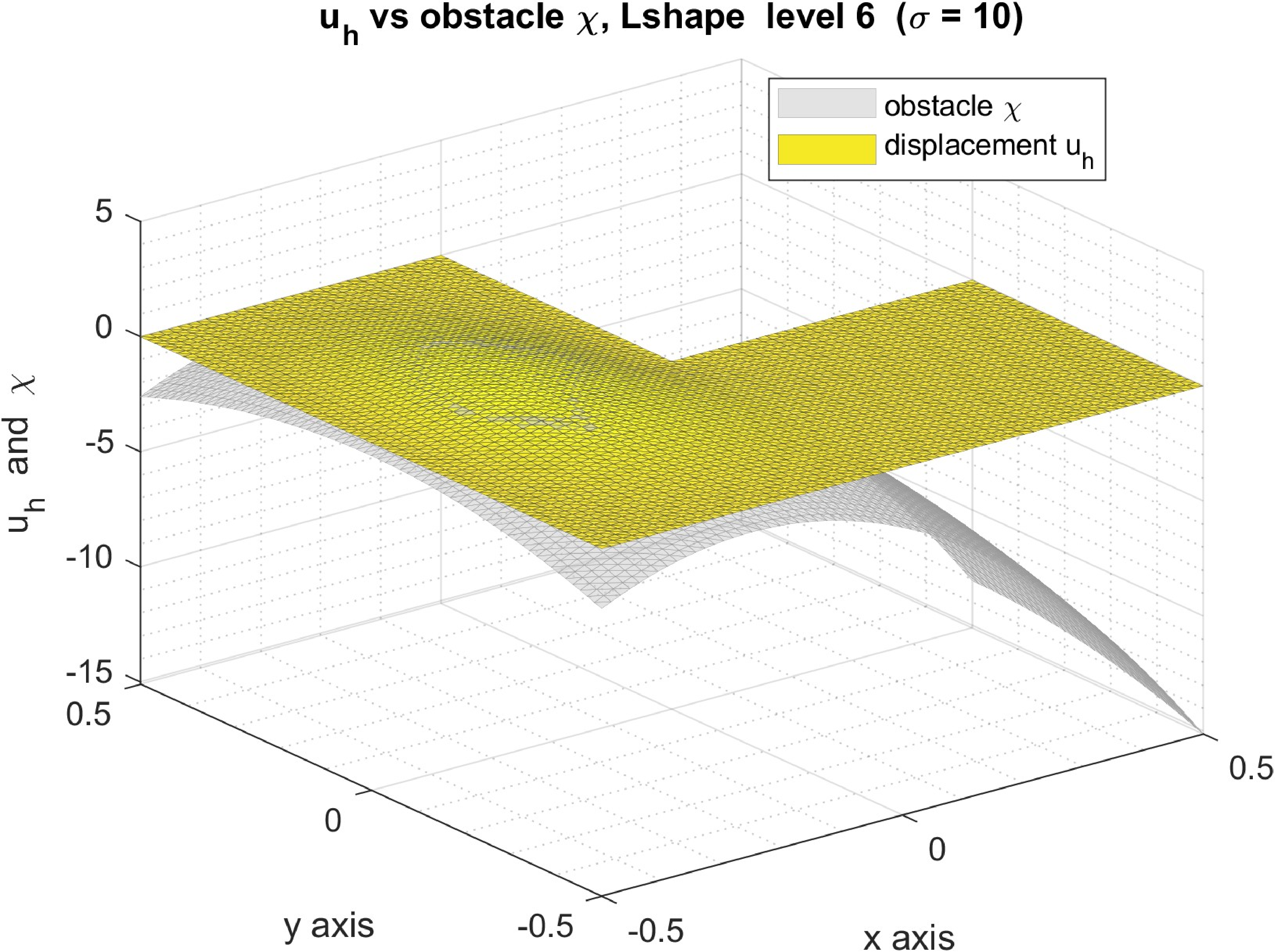}}
\vspace{0.4cm}
\subfloat[$\cC_5$]{%
    \includegraphics[width=0.42\textwidth]{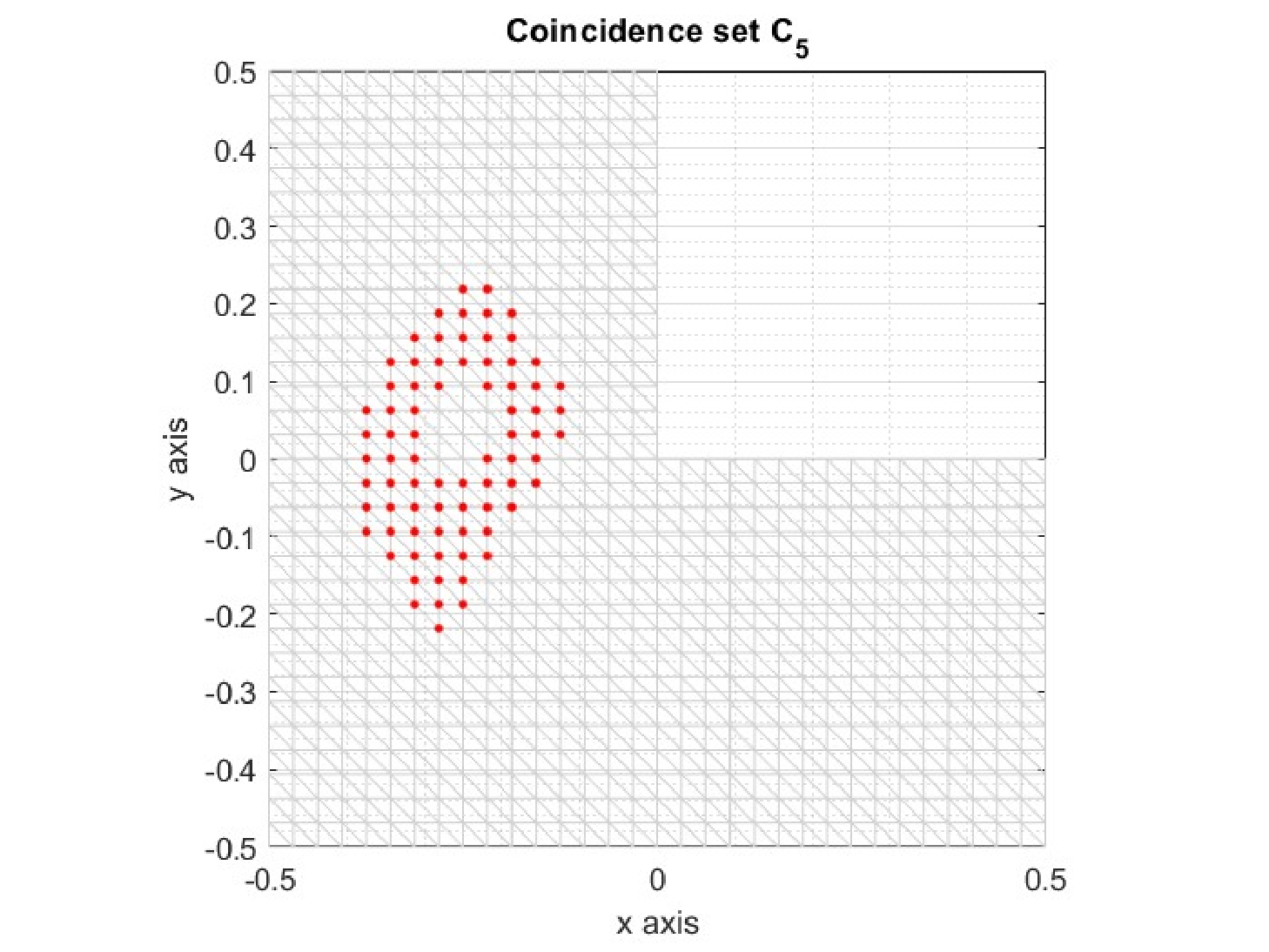}}
\hfill
\subfloat[$\cC_6$]{%
    \includegraphics[width=0.42\textwidth]{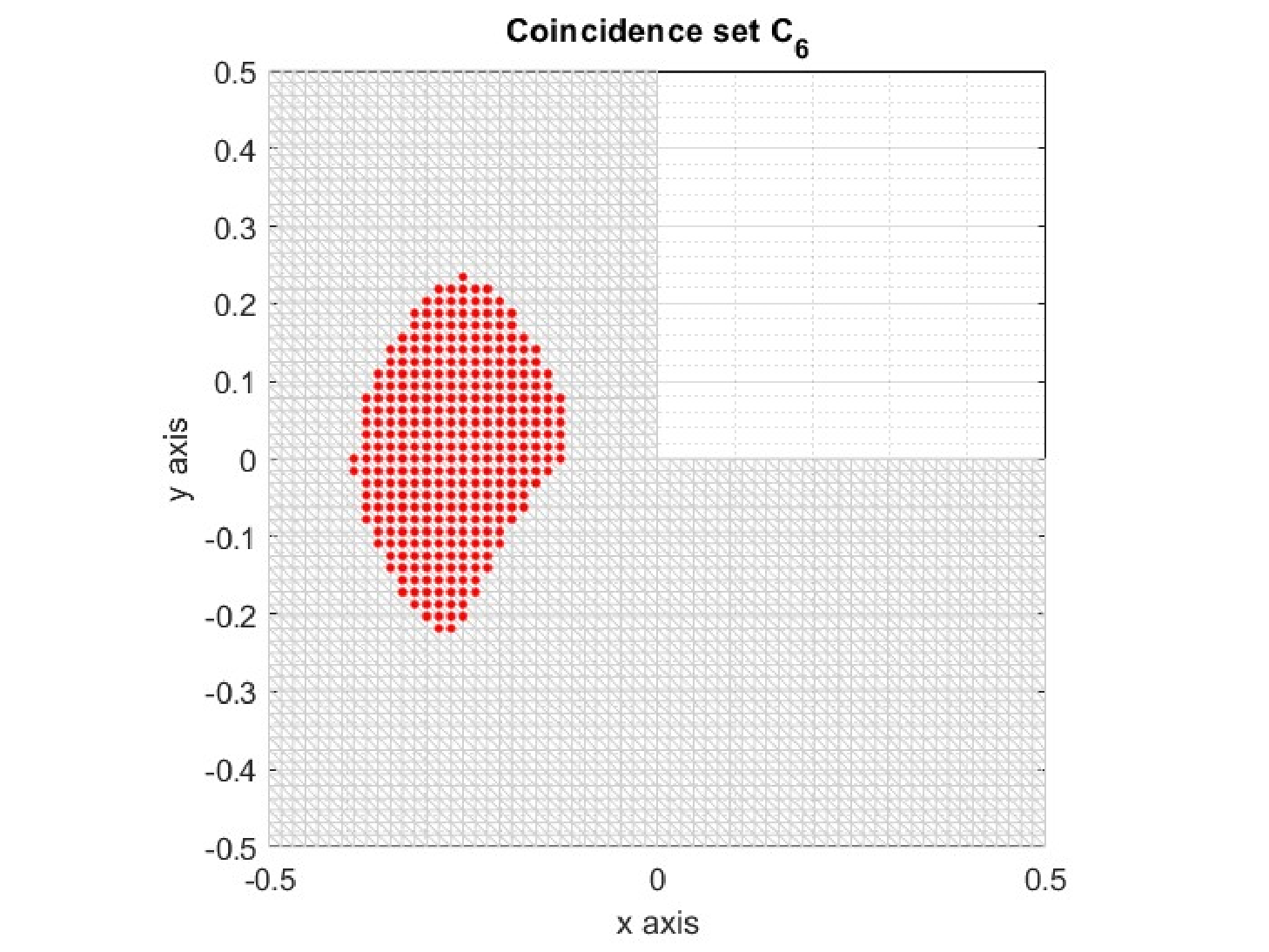}}
\caption{L-shaped domain at $\sigma=10$: the discrete displacement against the
obstacle on the finest level, and the discrete coincidence sets on the two
finest levels.}
\label{fig:lshape}

{\footnotesize Alt text: Three panels for the L-shaped domain. The upper panel
is a surface plot of the discrete displacement lying above the obstacle
surface. The lower two panels show the coincidence set at levels five and six
as a single compact blob in the left part of the domain, away from the
reentrant corner.}
\end{figure}

All eight vertices of $\T_1$ lie on $\partial\O$, so $V_h$ is spanned by five
edge-midpoint functions and $\widetilde e_1(u)=\widetilde e_1(v)=0$. From the
second level on, \Cref{tab:lshape} shows an order close to $1.37$ for the
displacement and an order rising from $1.03$ to $1.33$ for the stress function.
Both exceed the guaranteed exponent $0.5445$, so the computations are in the
pre-asymptotic range; the same is observed for the Morley discretisation of
this example in \citep{carstensen2021morley} and for the biharmonic obstacle
problem in \citep[Ex.~4]{brenner2012quadratic}. Since $\Delta^2\chi=0$ for the
obstacle \eqref{lshapeobstacle}, the complement of the coincidence set is
connected, as \Cref{fig:lshape} confirms.

\subsection{Dependence on the penalty parameter}\label{ssec:sigma}

The three experiments were repeated for nine penalties,
$\sigma\in\{5,10,15,20,25,30,35,40,45\}$. \Cref{tab:sigma} lists the energy
norm order obtained from a least squares fit of $\log_2 e_\ell$ against $\ell$
over all levels with a nonvanishing difference, and \Cref{fig:sigma} displays
the same information together with its maximum norm counterpart.

\begin{table}[htbp]
\centering
\footnotesize
\caption{Least squares discrete energy norm orders for nine values of the penalty
parameter. The guaranteed exponent is $\alpha=1$ on the square and
$\alpha=0.5445$ on the L-shaped domain.}
\label{tab:sigma}
\begin{tabular}{||c||c|c||c|c||c|c||}
\hline\hline
& \multicolumn{2}{c||}{\Cref{ex:sq1}} & \multicolumn{2}{c||}{\Cref{ex:sq2}}
& \multicolumn{2}{c||}{L-shape}\\
\cline{2-7}
$\sigma$ & $u$ & $v$ & $u$ & $v$ & $u$ & $v$\\
\hline\hline
5 & 1.0909 & 1.2023 & 1.1713 & 1.1915 & 0.7836 & 0.7988 \\
10 & 1.2332 & 1.1162 & 1.2936 & 1.1058 & 0.8146 & 0.7490 \\
15 & 1.2779 & 1.0613 & 1.3326 & 1.0522 & 0.8136 & 0.7083 \\
20 & 1.3048 & 1.0184 & 1.3563 & 1.0103 & 0.8083 & 0.6774 \\
25 & 1.3235 & 0.9831 & 1.3728 & 0.9755 & 0.8021 & 0.6524 \\
30 & 1.3376 & 0.9529 & 1.3851 & 0.9460 & 0.7956 & 0.6312 \\
35 & 1.3486 & 0.9266 & 1.3947 & 0.9202 & 0.7893 & 0.6128 \\
40 & 1.3575 & 0.9034 & 1.4024 & 0.8976 & 0.7831 & 0.5964 \\
45 & 1.3649 & 0.8826 & 1.4088 & 0.8771 & 0.7770 & 0.5818 \\
\hline\hline
\end{tabular}
\end{table}

\begin{figure}[htbp]
\centering
\subfloat[energy norm]{\includegraphics[width=0.46\textwidth]{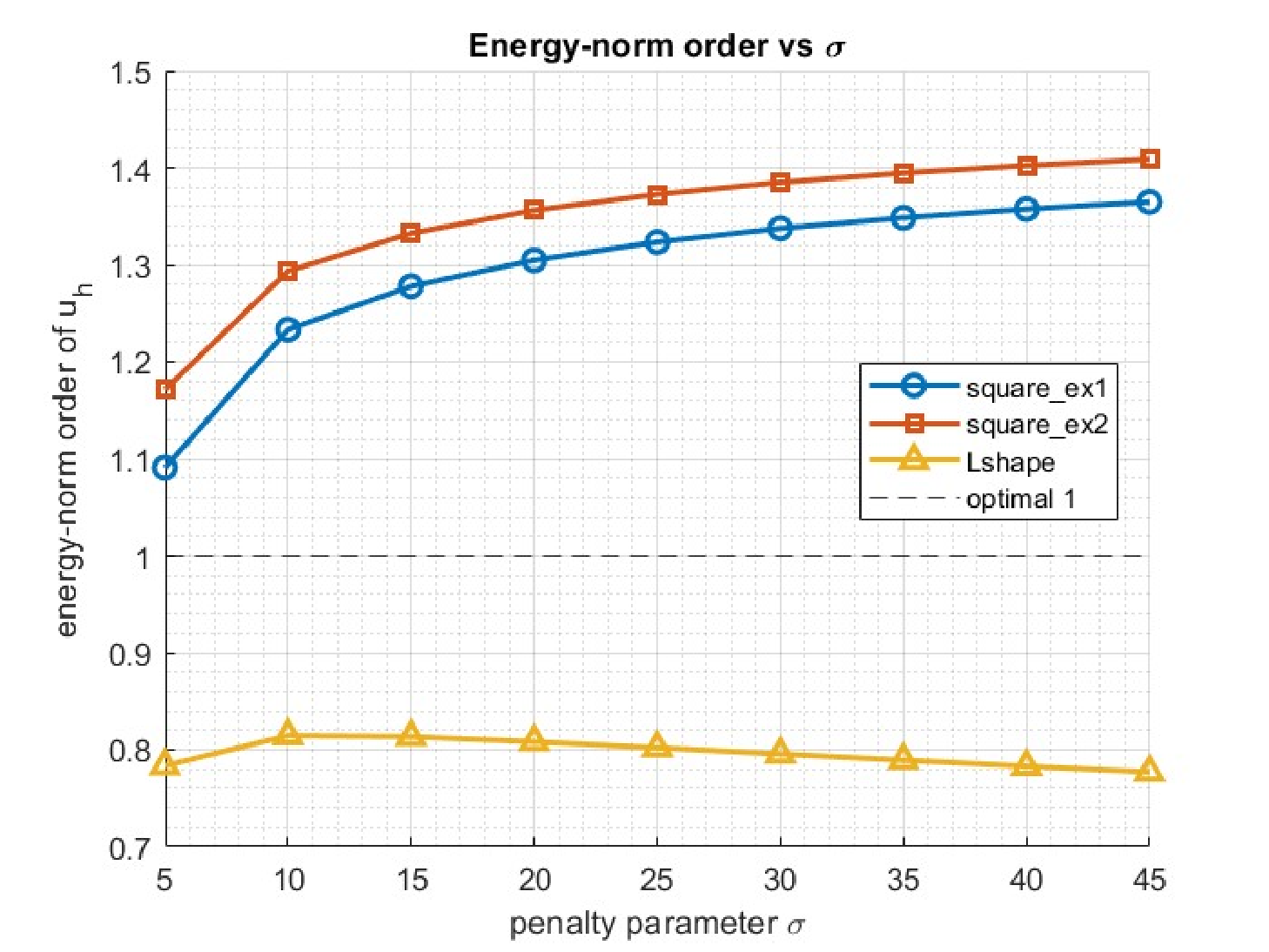}}
\hfill
\subfloat[maximum norm]{\includegraphics[width=0.46\textwidth]{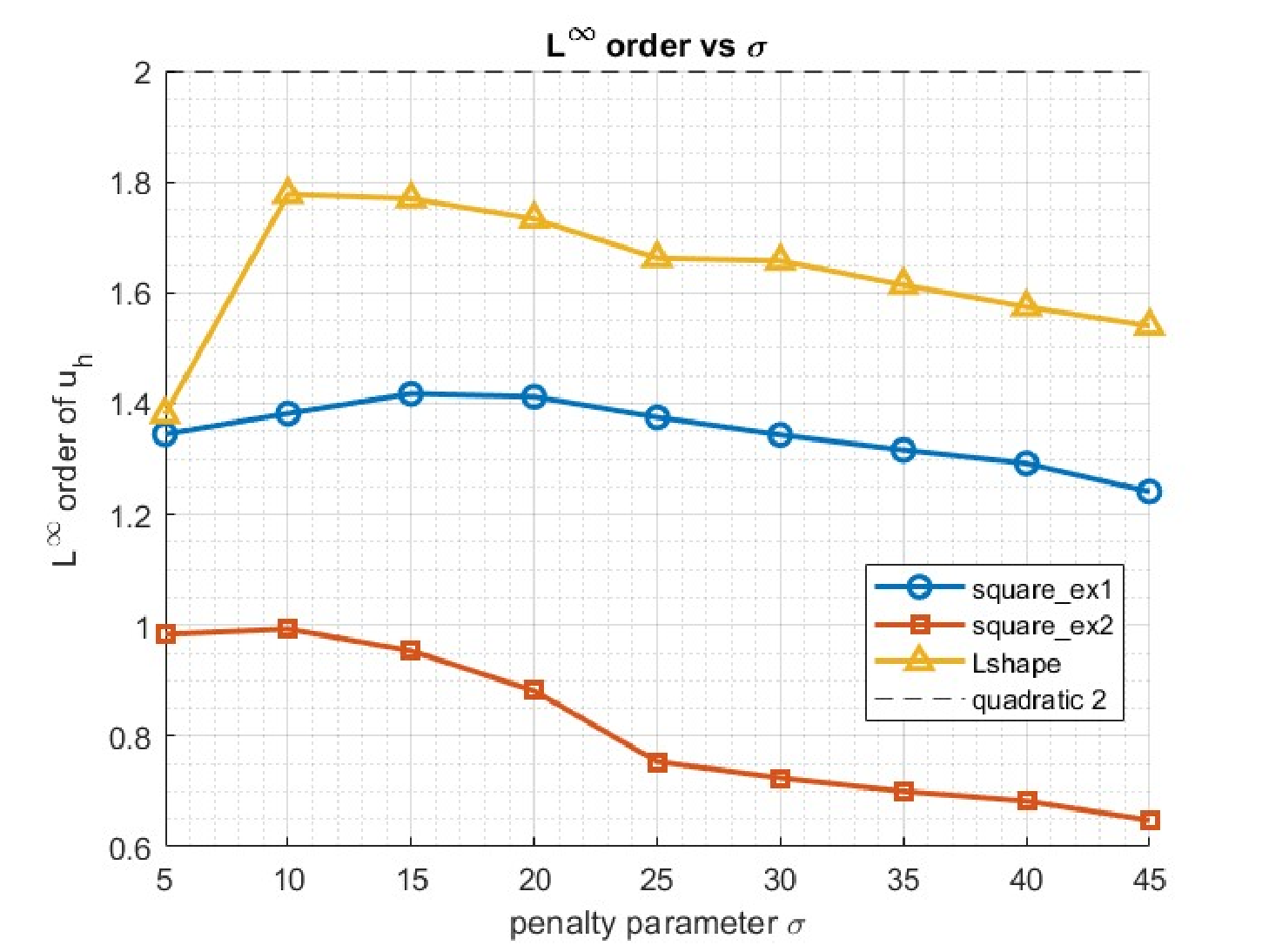}}
\caption{Observed orders of convergence as functions of the penalty parameter
$\sigma$ for the three experiments.}
\label{fig:sigma}

{\footnotesize Alt text: Two line plots of observed convergence order against
the penalty parameter $\sigma_1$ from 5 to 45, one curve per experiment. In
the energy norm the displacement curves rise with $\sigma_1$ and the L-shape
curve is nearly flat; in the maximum norm the curves decrease after an initial
rise.}
\end{figure}

The dependence is monotone. On the square the order for the displacement grows
with the penalty, from $1.09$ to $1.36$ in \Cref{ex:sq1} and from $1.17$ to
$1.41$ in \Cref{ex:sq2}, while the order for the Airy stress function falls,
from $1.20$ to $0.88$ and from $1.19$ to $0.88$; the two cross between
$\sigma=5$ and $\sigma=10$. For $\sigma\le20$ both components exceed the
order $\alpha=1$ of \Cref{thm:err} on the square, and from $\sigma=25$
onwards the stress function falls below it. On the L-shaped domain all eighteen
entries exceed the guaranteed exponent $0.5445$, the displacement order varying
only between $0.777$ and $0.815$.

Among the nine values, $\sigma=10$ gives the best worst case: it maximises the
smallest ratio of observed order to guaranteed order over the six columns of
\Cref{tab:sigma}. It is on that basis, and not because it produces the largest
individual rate, that the detailed tables of \Cref{ssec:square} and
\Cref{ssec:lshape} use $\sigma=10$. The whole range lies above the value $\sigma=5$ for which $a_{\rm IP}(\bullet,\bullet)$ is known to be coercive \citep[Rem.~2.3]{brenner2012quadratic}, and no instability was met anywhere in it, the computations completing on every level for all nine values.

\subsection{Violation of the smallness assumption}\label{ssec:smallness}

The uniqueness criteria of \Cref{thm:contsdep} and \Cref{thm:disc-existence}
demand small data. This subsection quantifies the violation for the load
$f(x,y)=(x+3)^2(x-3)^2(y+3)^2(y-3)^2$ on the square of \Cref{ex:sq1} and
records how the iteration of \Cref{ssec:solver} behaves once the hypothesis
fails. All runs use $\sigma=10$. The outer loop is allowed $100$ iterations here rather than the forty of \Cref{ssec:solver}, matching \citep{carstensen2021morley}.

Any single admissible function bounds the embedding constants of
\eqref{ctsembedding} from below. The choice
$w(x,y)=\bigl(\tfrac14-x^2\bigr)^2\bigl(\tfrac14-y^2\bigr)^2\in H^2_0(\O)$,
whose maximum is attained at the origin, gives
$\|w\|_{L^2(\O)}=0.0015873$, $\|w\|_{L^\infty(\O)}=0.0039063$ and
$\trinl w\trinr=0.0571429$, whence
\begin{align}\label{constbounds}
 C_{\rm F}\ge\frac{\|w\|_{L^2(\O)}}{\trinl w\trinr}=0.0278,
 \qquad
 C_{\rm S}\ge\frac{\|w\|_{L^\infty(\O)}}{\trinl w\trinr}=0.0684 .
\end{align}
The definition of $M(f,\chi)$ in \Cref{thm:contsdep} gives
$M(f,\chi)\ge\sqrt3\,C_{\rm F}\|f\|_{L^2(\O)}$, and
$\|f\|_{L^2(\O)}=6323.9951$, so that
\begin{align}\label{smallviolated}
 C_{\rm S}M(f,\chi)\ \ge\ \sqrt3\,C_{\rm S}C_{\rm F}\|f\|_{L^2(\O)}
 \ =\ 20.7993\ \gg\ \tfrac12 .
\end{align}
The sufficient condition of \Cref{thm:err} is therefore violated by a factor of
more than forty. All quantities in \eqref{constbounds} and
\eqref{smallviolated} belong to the continuous problem and are independent of
$\sigma$ and of the triangulation; they agree with the bounds obtained for
the Morley discretisation of the same example in \citep{carstensen2021morley}.

For this load the outer loop nevertheless terminated on all six levels, whereas
the Morley discretisation of the same example does not converge within $100$
iterations \citep{carstensen2021morley}. Failure of the smallness condition thus
need not obstruct termination, but it withdraws the guarantee that the computed
pair is the only discrete solution.

Scaling the obstacle of \Cref{ex:sq1} to $\lambda\chi$ with
$\lambda\in\{4,\dots,10\}$ is more revealing, since it enlarges the coincidence
set while keeping $f\equiv0$. \Cref{tab:lambda} records, level by level,
whether the outer loop terminated within the permitted number of iterations.

\begin{table}[htbp]
\centering
\footnotesize
\caption{Termination of the outer loop for the scaled obstacle $\lambda\chi$ at
$\sigma=10$. A tick marks the levels on which the partition into active and
inactive vertices stagnated.}
\label{tab:lambda}
\begin{tabular}{||c||c|c|c|c|c|c||c||}
\hline\hline
$\lambda$ & $\T_1$ & $\T_2$ & $\T_3$ & $\T_4$ & $\T_5$ & $\T_6$ & terminated\\
\hline\hline
4 & \checkmark & \checkmark & \checkmark & \checkmark & \checkmark & \checkmark & 6 \\
5 & \checkmark & \checkmark & \checkmark & \checkmark & \checkmark & \checkmark & 6 \\
6 & \checkmark & \checkmark & \checkmark & \checkmark & \checkmark & \checkmark & 6 \\
7 & \checkmark & \checkmark & \checkmark & \checkmark & \checkmark & \checkmark & 6 \\
8 & \checkmark & \checkmark & \checkmark & \checkmark & $\times$ & $\times$ & 4 \\
9 & \checkmark & \checkmark & $\times$ & $\times$ & $\times$ & $\times$ & 2 \\
10 & \checkmark & \checkmark & $\times$ & $\times$ & $\times$ & $\times$ & 2 \\
\hline\hline
\end{tabular}
\end{table}

\Cref{tab:lambda} exhibits a threshold. For $\lambda\le7$ the outer loop
terminates on all six levels. At $\lambda=8$ it terminates on the four coarsest
triangulations and fails on the two finest, and at $\lambda=9$ and
$\lambda=10$ it fails from the third level onwards. The number of levels on
which the partition stagnates therefore decreases monotonically from six to
four to two as the obstacle grows, and the failures appear first on the finest
meshes. This is the regime in which \Cref{rem:disc-smallness} no longer
guarantees a unique discrete solution, and the same behaviour is reported for
the Morley discretisation of this example in \citep{carstensen2021morley}.
A convergence theory for the coupled active set and Newton iteration in the
presence of the semilinearity remains open.

\section{Conclusions}\label{sec:conclusion}

This article analyses a quadratic $C^0$ interior penalty method for the von
K\'arm\'an obstacle problem. The discrete space consists of Lagrange $P_2$
elements and the obstacle constraint acts at the vertices. The trilinear form
contains terms on the edges, is symmetric in its first two arguments and is
bounded in the discrete energy norm. \Cref{thm:disc-sob} gives the Sobolev and
Friedrichs constants of the discrete energy norm with an explicit power of the mesh
size, and these constants enter the smallness condition of
\Cref{rem:disc-smallness}. The discrete problem has a solution, this solution
is unique under that condition, and the energy norm error is of order
$\mathcal{O}(h^\alpha)$ by \Cref{thm:err}.

The numerical experiments of \Cref{sec:numerics} confirm these rates on a
square domain and show the two possible forms of the coincidence set. On the
L-shaped domain the observed rates exceed the order that the reentrant corner
imposes, so the computations are in the pre-asymptotic range. The observed
rates depend on the penalty parameter in a monotone way, and the experiments
identify the range $\sigma\le20$ in which both solution components attain
the predicted order. For large data the smallness condition fails, and the
iterative solver then loses its reliability on fine meshes. An a posteriori
error analysis with adaptive mesh refinement for \eqref{C0IPwform} and a
convergence proof for the iterative solver remain open.

\bibliographystyle{plainnat}
\bibliography{vKeBib}

@article{carstensen2021morley,
  title={Morley finite element method for the von K{\'a}rm{\'a}n obstacle problem},
  author={Carstensen, Carsten and Gaddam, Sharat and Nataraj, Neela and Pani, Amiya K and Shylaja, Devika},
  journal={ESAIM: Mathematical Modelling and Numerical Analysis},
  volume={55},
  number={5},
  pages={1873--1894},
  year={2021},
  publisher={EDP Sciences}
}

@article{miersemann1992stability,
  title={Stability in obstacle problems for the von Karman plate},
  author={Miersemann, E and Mittelmann, HD},
  journal={SIAM journal on mathematical analysis},
  volume={23},
  number={5},
  pages={1099--1116},
  year={1992},
  publisher={SIAM}
}

@incollection{bacuta2002shift,
  title={Shift theorems for the biharmonic Dirichlet problem},
  author={Bacuta, Constantin and Bramble, James H and Pasciak, Joseph E},
  booktitle={Recent Progress in Computational and Applied PDEs},
  pages={1--26},
  year={2002},
  publisher={Springer}
}

@article{berger1968karman,
	title={Von {K{\'a}rm{\'a}n} equations and the buckling of a thin elastic plate, II plate with general edge conditions},
	author={Berger, Melvyn S and Fife, Paul C},
	journal={Communications on Pure and Applied Mathematics},
	volume={21},
	number={3},
	pages={227--241},
	year={1968},
	publisher={Wiley Online Library}
}

@article{blum1980boundary,
	title={On the boundary value problem of the biharmonic operator on domains with angular corners},
	author={Blum, Heribert and Rannacher, Rolf and Leis, R},
	journal={Mathematical Methods in the Applied Sciences},
	volume={2},
	number={4},
	pages={556--581},
	year={1980},
	publisher={Wiley Online Library}
}

@article{brenner2017c,
	title={A {$C^0$} interior penalty method for a von {K{\'a}rm{\'a}n} plate},
	author={Brenner, Susanne C and Neilan, Michael and Reiser, Armin and Sung, Li-Yeng},
	journal={Numerische Mathematik},
	volume={135},
	number={3},
	pages={803--832},
	year={2017},
	publisher={Springer}
}

@article {hintermuller2002primaldual,
    AUTHOR = {Hinterm\"{u}ller, M. and Ito, K. and Kunisch, K.},
     TITLE = {The primal-dual active set strategy as a semismooth {N}ewton
              method},
   JOURNAL = {SIAM J. Optim.},
  FJOURNAL = {SIAM Journal on Optimization},
    VOLUME = {13},
      YEAR = {2002},
    NUMBER = {3},
     PAGES = {865--888 (2003)},
      ISSN = {1052-6234},
   MRCLASS = {90C33 (65K10 90C53)},
  MRNUMBER = {1972219},
MRREVIEWER = {Hou Duo Qi},
       DOI = {10.1137/S1052623401383558},
       URL = {https://doi.org/10.1137/S1052623401383558},
}

@article{brenner2012quadratic,
	title={A quadratic {$C^0$} interior penalty method for the displacement obstacle problem of clamped Kirchhoff plates},
	author={Brenner, Susanne C and Sung, Li-Yeng and Zhang, Hongchao and Zhang, Yi},
	journal={SIAM Journal on Numerical Analysis},
	volume={50},
	number={6},
	pages={3329--3350},
	year={2012},
	publisher={SIAM}
}

@article{brenner2013morley,
	title={A Morley finite element method for the displacement obstacle problem of clamped Kirchhoff plates},
	author={Brenner, Susanne C and Sung, Li-yeng and Zhang, Hongchao and Zhang, Yi},
	journal={Journal of Computational and Applied Mathematics},
	volume={254},
	pages={31--42},
	year={2013},
	publisher={Elsevier}
}

@article{brenner2012finite,
	title={Finite element methods for the displacement obstacle problem of clamped plates},
	author={Brenner, Susanne C and Sung, Li-yeng and Zhang, Yi},
	journal={Mathematics of Computation},
	volume={81},
	number={279},
	pages={1247--1262},
	year={2012}
}

@article{brezzi1978finite,
	title={Finite element approximations of the von {K{\'a}rm{\'a}n} equations},
	author={Brezzi, Franco},
	journal={RAIRO. Analyse Num{\'e}rique},
	volume={12},
	number={4},
	pages={303--312},
	year={1978},
	publisher={EDP Sciences}
}

@article{caffarelli1979obstacle,
	title={The obstacle problem for the biharmonic operator},
	author={Caffarelli, Luis A and Friedman, Avner},
	journal={Annali della Scuola Normale Superiore di Pisa-Classe di Scienze},
	volume={6},
	number={1},
	pages={151--184},
	year={1979}
}

@article{carstensen2017nonconforming,
	title={Nonconforming finite element discretization for semilinear problems with trilinear nonlinearity},
	author={Carstensen, Carsten and Mallik, Gouranga and Nataraj, Neela},
	journal={arXiv preprint arXiv:1708.07627},
	year={2017}
}

@article{carstensen2018priori,
	title={A priori and a posteriori error control of discontinuous Galerkin finite element methods for the von {K{\'a}rm{\'a}n} equations},
	author={Carstensen, Carsten and Mallik, Gouranga and Nataraj, Neela},
	journal={IMA Journal of Numerical Analysis},
	volume={39},
	number={1},
	pages={167--200},
	year={2018},
	publisher={Oxford University Press}
}

@article{carstensen2019adaptive,
	title={Adaptive Morley FEM for the von {K{\'a}rm{\'a}n} equations with optimal convergence rates},
	author={Carstensen, Carsten and Nataraj, Neela},
	journal={arXiv preprint arXiv:1908.08013},
	year={2019}
}

@book{ciarlet1997mathematical,
	title={Mathematical Elasticity: Volume II: Theory of Plates},
	author={Ciarlet, Philippe G},
	volume={27},
	year={1997},
	publisher={Elsevier}
}

@inproceedings{frehse1971differenzierbarkeitsproblem,
	title={Zum differenzierbarkeitsproblem bei variationsungleichungen h{\"o}herer ordnung},
	author={Frehse, Jens},
	booktitle={Abhandlungen aus dem Mathematischen Seminar der Universit{\"a}t Hamburg},
	volume={36},
	number={1},
	pages={140--149},
	year={1971},
	organization={Springer}
}

@article{frehse1973regularity,
	title={On the regularity of the solution of the biharmonic variational inequality},
	author={Frehse, Jens},
	journal={Manuscripta Mathematica},
	volume={9},
	number={1},
	pages={91--103},
	year={1973},
	publisher={Springer}
}

@book{glowinski2008lectures,
	title={Lectures on numerical methods for non-linear variational problems},
	author={Glowinski, Roland},
	year={2008},
	publisher={Springer Science \& Business Media}
}

@article{hoppe1994adaptive,
	title={Adaptive multilevel methods for obstacle problems},
	author={Hoppe, Ronald HW and Kornhuber, Ralf},
	journal={SIAM journal on numerical analysis},
	volume={31},
	number={2},
	pages={301--323},
	year={1994},
	publisher={SIAM}
}

@book{kinderlehrer1980introduction,
	title={An introduction to variational inequalities and their applications},
	author={Kinderlehrer, David and Stampacchia, Guido},
	volume={31},
	year={1980},
	publisher={SIAM}
}

@article{knightly1967existence,
	title={An existence theorem for the von {K{\'a}rm{\'a}n} equations},
	author={Knightly, George H},
	journal={Archive for Rational Mechanics and Analysis},
	volume={27},
	number={3},
	pages={233--242},
	year={1967},
	publisher={Springer}
}

@article{mallik2016conforming,
	title={Conforming finite element methods for the von K{\'a}rm{\'a}n equations},
	author={Mallik, Gouranga and Nataraj, Neela},
	journal={Advances in Computational Mathematics},
	volume={42},
	number={5},
	pages={1031--1054},
	year={2016},
	publisher={Springer}
}

@article{mallik2016nonconforming,
	title={A nonconforming finite element approximation for the von {K{\'a}rm{\'a}n} equations},
	author={Mallik, Gouranga and Nataraj, Neela},
	journal={ESAIM: Mathematical Modelling and Numerical Analysis},
	volume={50},
	number={2},
	pages={433--454},
	year={2016},
	publisher={EDP Sciences}
}

@article{miyoshi1976mixed,
	title={A mixed finite element method for the solution of the von {K{\'a}rm{\'a}n} equations},
	author={Miyoshi, Tetsuhiko},
	journal={Numerische Mathematik},
	volume={26},
	number={3},
	pages={255--269},
	year={1976},
	publisher={Springer}
}

@article{muradova2007unilateral,
	title={A unilateral contact model with buckling in von {K{\'a}rm{\'a}n}
	plates},
	author={Muradova, Aliki D and Stavroulakis, Georgios E},
	journal={Nonlinear Analysis: Real World Applications},
	volume={8},
	number={4},
	pages={1261--1271},
	year={2007},
	publisher={Elsevier}
}

@article{ohtake1980analysisI,
	title={Analysis of certain unilateral problems in von {K{\'a}rm{\'a}n} plate
	theory by a penalty method-part 1. A variational principle with
	penalty},
	author={Ohtake, K and Oden, J Tinsley and Kikuchi, Noboru},
	journal={Computer Methods in Applied Mechanics and Engineering},
	volume={24},
	number={2},
	pages={187--213},
	year={1980},
	publisher={Elsevier}
}

@article{ohtake1980analysisII,
  title={Analysis of certain unilateral problems in von K{\'a}rm{\'a}n plate theory by a penalty method-part 2. approximation and numerical analysis},
  author={Ohtake, Kunihiko and Oden, J Tinsley and Kikuchi, Noboru},
  journal={Computer Methods in Applied Mechanics and Engineering},
  volume={24},
  number={3},
  pages={317--337},
  year={1980},
  publisher={Elsevier}
}

@article{quarteroni1979hybrid,
	title={Hybrid finite element methods for the von {K{\'a}rm{\'a}n} equations},
	author={Quarteroni, Alfio},
	journal={Calcolo},
	volume={16},
	number={3},
	pages={271--288},
	year={1979},
	publisher={Springer}
}

@article{reinhart1982numerical,
	title={On the numerical analysis of the von {K{\'a}rm{\'a}n} equations: mixed finite element approximation and continuation techniques},
	author={Reinhart, Laure},
	journal={Numerische Mathematik},
	volume={39},
	number={3},
	pages={371--404},
	year={1982},
	publisher={Springer}
}

@book{suttmeier2008numerical,
	title={Numerical solution of variational inequalities by adaptive finite elements},
	author={Suttmeier, Franz-Theo},
	year={2008},
	publisher={Springer}
}

@article{yau1992obstacle,
	title={Obstacle problem for von {K{\'a}rm{\'a}n} equations},
	author={Yau, Shing-Tung and Gao, Yang},
	journal={Advances in Applied Mathematics},
	volume={13},
	number={2},
	pages={123--141},
	year={1992},
	publisher={Elsevier}
}

\end{document}